\let\oldtocsection=\tocsection
\let\oldtocsubsection=\tocsubsection
\let\oldtocsubsubsection=\tocsubsubsection
\renewcommand{\tocsection}[2]{\hspace{0em}\oldtocsection{#1}{#2}}
\renewcommand{\tocsubsection}[2]{\hspace{2em}\oldtocsubsection{#1}{#2}}
\renewcommand{\tocsubsubsection}[2]{\hspace{2em}\oldtocsubsubsection{#1}{#2}}
\numberwithin{figure}{section}
\numberwithin{equation}{section}
\newtheorem{thm}{Theorem}[section]
\newtheorem*{thmnonum}{Theorem}
\newtheorem{defn}[thm]{Definition}
\newtheorem{lmm}[thm]{Lemma}
\newtheorem{prp}[thm]{Proposition}
\newtheorem{remark}[thm]{Remark}
\newcommand{\tei}{Teichm\"uller}
\newcommand{\qc}{quasiconformal}
\newcommand{\qd}{quadratic differential}
\newcommand{\idt}{of $\id$-type}
\newcommand{\const}{\operatorname{const}}
\newcommand{\id}{\operatorname{id}}
\newcommand{\Id}{\operatorname{Id}}
\renewcommand{\Re}{\operatorname{Re\,}}
\renewcommand{\Im}{\operatorname{Im\,}}
\renewcommand{\mod}{\operatorname{mod\,}}
\DeclareMathOperator*{\esssup}{ess\,sup}
\newcommand{\abs}[1]{\lvert #1 \rvert}
\newcounter{reminder}
\title{Infinite-dimensional Thurston theory\\and transcendental dynamics I:\\infinite-legged spiders}
\author{Konstantin Bogdanov}
\begin{document}
	
\begin{abstract}
	We develop techniques that lay out a basis for generalizations of the famous Thurston's Topological Characterization of Rational Functions for an \emph{infinite} set of marked points and branched coverings of infinite degree. Analogously to the classical theorem we consider the Thurston's $\sigma$-map acting on a \tei\ space which is this time infinite-dimensional --- and this leads to a completely different theory comparing to the classical setting.
	
	We demonstrate our techniques by giving an alternative proof of the result by Markus F\"orster about the classification of exponential functions with the escaping singular value.
\end{abstract}

\maketitle
	
\tableofcontents

\addtocontents{toc}{\protect\setcounter{tocdepth}{1}}

\section{Introduction}

This is the first article out of four, prepared in order to publish the results of author's doctoral thesis. In the second and third papers we use the toolbox of the infinite-dimensional Thurston theory developed here to classify the transcendental entire functions that are compositions of a polynomial and the exponential for which their singular values escape on disjoint dynamic rays (the most general mode of escape). In the fourth article we investigate continuity of such families of functions with respect to potentials and external addresses and use the continuity argument to extend our classification to the case of escape on the (pre-)periodic rays.

\subsection*{Motivation from transcendental dynamics}
A part of complex dynamics that has been studied intensively over the last decades is the dynamics of transcendental entire functions. Good introductory references for the subject are \cite{ErLyu,DS-Transcendental}.

As in polynomial dynamics, for a transcendental entire function $f$ an important and natural concept is its \emph{escaping set} $I(f)=\{z\in\mathbb{C}: f^n(z)\to\infty\text{ as }n\to\infty\}$. It is proved in \cite{SZ-Escaping} for the exponential family $\{e^z+\kappa: \kappa\in\mathbb{C}\}$ and in \cite{RRRS} for more general families of functions (of bounded type and finite order) that the escaping set of every function in the family is organized in form of \emph{dynamic rays}.

Roughly speaking, this means that every point $z\in I(f)$ can be joined to $\infty$ by a unique simple arc contained in $I(f)$ (or is mapped to such arc after finitely many iterations), so that the iterates $f^n$ restricted to the arc are injective, continuous at $\infty$ (here we agree for the sake of convenience that $f(\infty)=\infty$), and converge to $\infty$ uniformly. About the escaping points that belong to a dynamic ray we say that they \emph{escape on rays}. This is the general mode of escape for many important families of transcendental entire functions.

%We drop endpoints here because they might have less predictable orbit, but it is an interesting fact that in some cases the endpoints of rays form a ``bigger'' set than the union of all rays without endpoints, in the sense of the Hausdorff dimension (see \cite{Karpinska,DS-Duke,DS-Monthly}).

In many cases the points escaping on rays can be described by their potential (or ``speed of escape'') and external address (or ``combinatorics of escape'', i.e.\ the sequence of dynamic rays containing the escaping orbit). This is analogous to the B\"ottcher coordinates for polynomials where the points in the complement of the filled Julia set are encoded by their potential and external angle (another more general way to introduce ``B\"ottcher coordinates'' for trancendental entire functions of bounded type is described in \cite{LasseParaSpace}).

One of the most fruitful directions in the study of holomorphic dynamical systems is the investigation of parameter spaces. The most famous example is the study of the Mandelbrot set which is the set of parameters $c$ in the complex plane for which the critical value $c$ of the polynomial $z^2+c$ does not escape. The simplest analogue in the transcendental world is the space of complex parameters $\kappa$, each associated to the function $e^z+\kappa$. When we consider parameters $\kappa$ for which the singular value $\kappa$ of $e^z+\kappa$ escapes, we, roughly speaking, investigate a part of the ``complement of the Mandelbrot set of the exponential family''. The results in this direction are contained in \cite{FRS,MarkusParaRays,MarkusThesis}

Now, we can ask a question. ($\star$) \emph{For a transcendental entire function of finite type (i.e.\ with finitely many singular values), is there in its parameter space an entire function such that its singular values escape on rays and have initially prescribed potentials and external addresses?}

In this article we set up a flexible machinery for proving this kind of results for general families of entire functions and apply it to reprove the well-known classification result for the exponential family \cite{MarkusThesis}.

A main tool is what one might call infinite-dimensional Thurston theory in dynamics. It is a generalization of the famous \emph{Thurston's Topological Characterization of Rational Functions} \cite{DH}. The subject is interesting not only as a tool but also in its own right and requires an independent exposition.

\subsection*{Infinite-dimensional Thurston theory}

One of the questions of big interest in dynamics is the interplay between topology and geometry. Or saying differently, when there exists a geometrical object corresponding in a certain sense to the underlying topological structure? And if it exists, is it unique?

A major result in this direction is proven by William Thurston.

\begin{thmnonum}[Topological Characterization of Rational Functions \cite{DH,HubbardBook2}]
	A post-critically finite branched covering $f:S^2\to S^2$ with hyperbolic orbifold is realized via a rational map if and only if there is no Thurston obstruction.
\end{thmnonum}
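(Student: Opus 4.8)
The plan is to convert the realization question into a fixed-point problem for Thurston's pullback map on a Teichm\"uller space, in the spirit of Douady--Hubbard \cite{DH}. First I would introduce the Teichm\"uller space $\mathcal T = \mathcal T(S^2, P_f)$ of the sphere marked by the post-critical set $P_f$, which is finite by hypothesis; it is a finite-dimensional complex manifold carrying the Teichm\"uller metric, and a point of $\mathcal T$ is represented by an orientation-preserving homeomorphism $\phi\colon S^2\to\widehat{\mathbb C}$ modulo post-composition with M\"obius maps and isotopy rel $P_f$. Because $f$ is a branched covering with $f(P_f)\subseteq P_f$, the complex structure that $\phi$ induces on $S^2$ can be pulled back along $f$ and then straightened by a homeomorphism $\psi$; sending the class of $\phi$ to the class of $\psi$ defines a holomorphic self-map $\sigma=\sigma_f\colon\mathcal T\to\mathcal T$, and the straightening simultaneously produces a rational map $g_\tau$ with $g_\tau\circ\psi\simeq\phi\circ f$ rel $P_f$.

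The key bookkeeping step is that $f$ is realized by a rational map \emph{if and only if} $\sigma$ has a fixed point: at a fixed point $\phi$ and $\psi$ become isotopic rel $P_f$, and composing with that isotopy conjugates $f$ to $g_\tau$ on the nose. Next one exploits that $\sigma$ does not increase the Teichm\"uller metric (pullback of Beltrami differentials is norm-nonincreasing) and, crucially, that the hyperbolic-orbifold hypothesis forces $\sigma$ to be \emph{strictly} contracting on compact sets; hence $\sigma$ has at most one fixed point, and any orbit $(\sigma^n(\tau_0))_{n\ge 0}$ that remains in a compact subset of $\mathcal T$ must converge to it. So the whole theorem reduces to the dichotomy: \emph{the orbit $(\sigma^n(\tau_0))$ is precompact in $\mathcal T$ if and only if $f$ has no Thurston obstruction}, giving the chain ``no obstruction $\Leftrightarrow$ precompact orbit $\Leftrightarrow$ $\sigma$ has a fixed point $\Leftrightarrow$ $f$ is realized''.

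This dichotomy is the heart of the matter. One analyzes how an orbit can leave every compact set of $\mathcal T$: by a Mumford-type compactness criterion this happens exactly when some collection $\Gamma$ of disjoint essential simple closed curves on $S^2\setminus P_f$ becomes arbitrarily ``short'' (its extremal lengths tend to $0$) along the orbit. Comparing the moduli of annular neighbourhoods of curves of $\Gamma$ before and after one application of $\sigma$ --- via the Gr\"otzsch inequality and the behaviour of moduli under the branched covering $f$ --- one obtains the linear estimate governed by the Thurston transition matrix $f_\Gamma\colon\mathbb R^\Gamma\to\mathbb R^\Gamma$, whose entries count the preimage components of each curve that are isotopic into $\Gamma$, weighted by $1/\deg$ of the corresponding covering. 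From this one deduces that the leading eigenvalue satisfies $\lambda(f_\Gamma)\ge 1$, i.e.\ $\Gamma$ is a Thurston obstruction. Conversely, if $\Gamma$ is an obstruction with $\lambda(f_\Gamma)\ge 1$, the same moduli estimates show its extremal lengths cannot stay bounded below along any orbit, so no orbit is precompact and $\sigma$ has no fixed point.

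The main obstacle is making this escape-to-the-boundary analysis quantitative and, above all, extracting a genuinely \emph{$f$-invariant} obstruction rather than an a priori merely eventual one. One must (a) pin down precisely which curves get short under degeneration in $\mathcal T(S^2,P_f)$; (b) prove the sharp comparison of moduli under the branched pullback, with the correct $1/\deg$ weights, that produces the matrix $f_\Gamma$ and makes the eigenvalue estimate tight enough; and (c) run a finiteness/pigeonhole argument over the (finitely many) relevant homotopy classes of multicurves to upgrade ``some curves are eventually short'' to an honest invariant multicurve with $\lambda(f_\Gamma)\ge 1$. The hyperbolic-orbifold hypothesis enters exactly to exclude the flexible Latt\`es and torus-endomorphism examples, for which $\sigma$ is a global isometry and the contraction step --- hence uniqueness and the convergence of precompact orbits --- fails.
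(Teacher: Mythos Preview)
The paper does not contain a proof of this statement: it is quoted in the Introduction as a known theorem, with references to \cite{DH,HubbardBook2}, purely to motivate the infinite-dimensional generalization that the paper actually develops. There is therefore nothing in the paper to compare your proposal against.

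That said, your outline is a faithful high-level summary of the Douady--Hubbard argument in \cite{DH}: the reduction to a fixed point of the pullback map $\sigma$ on $\mathcal{T}(S^2,P_f)$, the weak contraction coming from the norm-nonincrease of Beltrami pullback, the strict contraction under the hyperbolic-orbifold hypothesis (via the co-derivative acting on integrable quadratic differentials), and the dichotomy that a diverging orbit produces an invariant multicurve with leading eigenvalue $\geq 1$. The obstacles you flag in (a)--(c) are exactly the substantive parts of \cite{DH}, so as a \emph{plan} this is correct; as a \emph{proof} it would of course need those parts filled in, and that is essentially the content of the cited references rather than of this paper.
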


This result gives a complete answer to the question whether there exists a post-critically finite rational map with a prescribed combinatorics of the covering.

So it is natural to hope that if we prove an analogous ``characterization of entire functions'' for ``topological entire functions'' representing some properly chosen combinatorial model, then the corresponding entire function will have properties as required in question ($\star$). It turns out that this can be done (at least for certain explicit families of functions), but the proof, though keeping some similarities such as iterations on a \tei\ space, goes in quite a different direction than the proof of the classical theorem. The key differences between our and the classical case are that we have to consider iterations on an \emph{infinite-dimensional} \tei\ space (since the set of marked points is infinite), and that our topological entire function is a branched covering of \emph{infinite} degree. This drastically changes the approach already on the level of the setup. So far the case of infinite set of marked points for different type of functions was explored in \cite{BrownThesis,Cui,MarkusThesis}. A generalization of the Characterization Theorem for post-singularly finite topological exponential functions is provided in \cite{HSS}.

Before discussing details we present the ``grand scheme'' of how one can construct entire function demanded in question ($\star$). This ``grand scheme'' is partially inspired by the ideas in \cite{MarkusThesis}. 
\begin{enumerate}
	\item Construct a (non-holomorphic) ``model map'' for which the singular values escape on rays with desired ``speed of escape'' and ``combinatorics''. It will define (analogously to the classical case) the Thurston's $\sigma$-map acting on the \tei\ space of the complement of the singular orbits of the ``model map''.
	\item Construct a compact subspace in the \tei\ space that is invariant under $\sigma$.
	\item Prove that $\sigma$ is strictly contracting  in the \tei\ metric on the constructed subspace.
	\item Using the argument of the Banach Fixed Point Theorem prove that $\sigma$ has a unique fixed point on the constructed subspace, and this point corresponds to the entire function with the desired conditions on its singular orbits. 
\end{enumerate}

This scheme was successfully employed to prove classification results for compositions of polynomials with the exponential \cite{MyThesis}, and we believe that it works in a much greater generality (after adjustments of its internal ingredients). In this article we develop the techniques of how to treat every item (1)-(4) and use the scheme to answer the question ($\star$) in the ``baby case'' of the exponential family. Among all, the item (2) is the ``weightiest'' one, and we elaborate it in the two largest Sections~\ref{sec:spiders} and \ref{sec:inv_compact_subset}.

\subsection*{Setup of Thurston iteration}

This is what happens in the item (1) of the ``grand scheme'' and Section~\ref{sec:iteration_setup}.

In the parameter space that we consider we pick a function $f_0$ that has escaping points realizing the desired ``speeds of escape'' and ``combinatorics'' (from question ($\star$)). Based on $f_0$ we construct (by a postcomposition with a ``capture'') a special quasiregular function $f$ for which its singular orbits coincide with the corresponding escaping orbits of $f_0$, and which are equal to $f_0$ outside of some compact set. Moreover, its singular values escape on rays (defined analogously as for entire maps). This function $f$ plays the role of the topological branched covering in the classical theorem.

Based on $f$ we can construct the $\sigma$-map acting on the \tei\ space (Definition~\ref{defn:teich_space_main}) of $\mathbb{C}\setminus P_f$ where $P_f$ is the post-singular set of $f$ (i.e.\ the set of marked points). The map $\sigma:[\varphi]\to[\tilde{\varphi}]$ is defined in the usual way via Thurston diagram (Section~\ref{sec:iteration_setup}).

\begin{center}
	\begin{tikzcd}
		\mathbb{C},P_f \arrow[r, "{\tilde{\varphi}}"] \arrow[d, "f"]	& \mathbb{C},\tilde{\varphi}(P_f) \arrow[d, "g=\varphi\circ f\circ\tilde{\varphi}^{-1}"] \\
		\mathbb{C},P_f \arrow[r, "{\varphi}"] & \mathbb{C},\varphi(P_f)
	\end{tikzcd}
\end{center}
\vspace{0.5cm}

An attempt to define the Thurston's $\sigma$-map as in the classical theorem for a \emph{topological} (rather than quasiregular) exponential function would necessarily fail: the $\sigma$-map still can be defined on the level of isotopy classes of homeomorphisms, but since the post-singular set is infinite, the isotopy class of $\tilde{\varphi}\circ\varphi^{-1}$ relative $\varphi(P_f)$ might not necessarily contain a \qc\ map (see Subsection~\ref{subsec:top_exp_fails} for more details). Yet in this case $\sigma$ can be interpreted as a map between different \tei\ spaces, but we do not touch such constructions in this article.

In the article we set up the iteration for the case of exponential family, but it can be easily upgraded to \emph{any} family of transcendental entire functions.

\subsection*{$\Id$-type maps and spiders}

The goal of item (2) in the ``grand scheme'' is to construct a compact subset of $\mathcal{T}_f$ which is invariant under $\sigma$.

The space $\mathcal{T}_f$ with its isotopies relative to infinitely many points in $P_f$ is quite complicated, so one might want to take a look at simpler subspaces. These considerations lead us to the notion of $\id$-type maps (Definition~\ref{defn:id_type}). These are \qc\ maps that tend to identity as their argument tends to $\infty$ on the post-singular dynamic rays of $f$. An important property of these maps is that the images of post-singular dynamic rays of $f$ preserve their asymptotics near $\infty$, and this property is invariant under $\sigma$ (under proper normalization). So $\id$-type maps form a $\sigma$-invariant subspace of $\mathcal{T}_f$ that respects the ``combinatorics'' of $f$.

To every $\id$-type map one can associate an \emph{infinite-legged spider} and run the so-called spider-algorithm \cite{Spiders,MarkusThesis}. The spider which is really the driving concept of the proofs, is a convenient presentation of a point in the \tei\ space that provides a more tame description of its homotopy information (every point in the \tei\ space is described by the positions of images of $P_f$, which is moduli space information, and homotopy information). One can introduce certain equivalence relation on the set of spiders, called projective equivalence (Definition~\ref{defn:proj_equiv}), for which the following theorem holds (more precise version is Theorem~\ref{thm:W_define_teich_point}).  

\begin{thmnonum}
	Projective equivalence of spiders is \tei\ equivalence, i.e.\ two spiders are projectively equivalent if and only if the associated $\id$-type maps represent the same point in the \tei\ space.
\end{thmnonum}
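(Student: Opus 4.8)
The statement has two directions. The easier one: if two $\id$-type maps $\varphi_0,\varphi_1$ represent the same point in $\mathcal{T}_f$ — that is, $\varphi_1\circ\varphi_0^{-1}$ is isotopic to the identity relative to $\varphi_0(P_f)$ — then the associated spiders are projectively equivalent. Here I would track how a spider is built from an $\id$-type map: its legs are the images under $\varphi$ of the post-singular dynamic rays of $f$, and its feet are the points of $\varphi(P_f)$. An isotopy rel $\varphi_0(P_f)$ between $\varphi_1\circ\varphi_0^{-1}$ and the identity drags the legs of one spider onto the legs of the other through a homotopy that fixes all feet, and — because both maps are $\id$-type — this homotopy can be taken to respect the asymptotics at $\infty$ along each ray (possibly after the allowed normalization, which is exactly what "projective" absorbs). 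So homotopic-rel-feet plus matching asymptotics is precisely projective equivalence of spiders; this direction is essentially unwinding the definitions, though one must be careful that the isotopy can be chosen to move points a bounded amount near $\infty$ so that the "tend to identity on the rays" condition is not destroyed.

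The substantive direction is the converse: projectively equivalent spiders give the same \tei\ point. The plan is to start from two $\id$-type maps $\varphi_0,\varphi_1$ whose spiders are projectively equivalent, and to build an isotopy $\varphi_1\circ\varphi_0^{-1}\simeq\id$ rel $\varphi_0(P_f)$ directly. First, projective equivalence gives a homeomorphism of the plane (or an isotopy) carrying the legs-and-feet configuration of one spider to that of the other; the key is that the complement of a spider (the plane minus all legs and feet) is a disjoint union of simply connected "sectors," and a homeomorphism between two spiders that matches legs and feet combinatorially extends canonically over each sector by the Schoenflies/Alexander-type argument for simply connected pieces. I would assemble these sector-by-sector extensions into a global homeomorphism $h$ of $\mathbb{C}$ fixing $\varphi_0(P_f)$ and agreeing with $\varphi_1\circ\varphi_0^{-1}$ on a neighborhood of the legs; then show $h$ is isotopic to the identity rel $\varphi_0(P_f)$ (again, cutting along the legs reduces this to isotoping a homeomorphism of a disc rel a finite or discrete boundary set, where Alexander's trick applies), and finally show $h$ is isotopic to $\varphi_1\circ\varphi_0^{-1}$ itself rel $\varphi_0(P_f)$ — this last step is where the matching of the legs as actual sets (not just up to homotopy) is used, together with the fact that two homeomorphisms agreeing near an embedded system of arcs whose complement is a union of discs are isotopic rel that system.

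The main obstacle is the noncompactness: $P_f$ is infinite and accumulates only at $\infty$, and the legs are unbounded, so none of the classical finite-type surface arguments apply off the shelf. Concretely, when I glue the sector-by-sector isotopies I must produce a single isotopy $H_t$ that is continuous and well-defined at $\infty$, i.e. that moves points a uniformly small amount near $\infty$ — otherwise the end result is not even a self-map of $\mathbb{C}$, let alone one in the correct isotopy class. This is exactly the place where the $\id$-type hypothesis and the quantitative "projective" normalization have to be invoked to control the isotopy uniformly along the escaping ends; I expect this to require a careful exhaustion argument, isotoping on larger and larger compact pieces while using the asymptotic normalization to keep the tails under control, much as in the construction of the invariant compact set in Sections~\ref{sec:spiders} and \ref{sec:inv_compact_subset}. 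Once that uniform control is in hand, the rest is bookkeeping with Alexander's trick on the sectors and the standard fact that isotopy classes rel a discrete set are detected by the induced action on arcs, i.e.\ by the spider legs.
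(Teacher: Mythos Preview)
There is a genuine gap. In this paper, \emph{projective equivalence} (Definition~\ref{defn:proj_equiv}) means $\varphi(a_n)=\psi(a_n)$ and $W_n^\varphi=W_n^\psi$ for all $n$, where $W_n^\varphi$ records the homotopy type of the leg $\varphi(R_n)$ relative only to the \emph{finite} set $\varphi(\mathcal{O}_n)=\{\varphi(a_0),\dots,\varphi(a_n)\}$ --- ``projective'' here refers to projecting to finitely many punctures, not to affine or M\"obius normalization. Your claim that ``projective equivalence gives a homeomorphism of the plane carrying the legs-and-feet configuration of one spider to that of the other'' therefore assumes exactly what must be proved: that homotopy of each leg rel finitely many punctures upgrades to homotopy rel the full infinite set $\varphi(P_f)$. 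The paper treats this as a separate step (Proposition~\ref{prp:proj_spiders_define_teich_point}): one retracts the infinitely-punctured half-plane onto a Hawaiian earring (Lemma~\ref{lmm:hawaiian_earring_skeleton}) and uses that $\pi_1$ of the Hawaiian earring injects into the inverse limit $\varprojlim F_j$ of the fundamental groups of its finite approximations; the disjointness of $R_k$ from $R_n$ for $k>n$ is what pins down the additional data at each successive puncture. This inverse-limit argument is absent from your plan and is the heart of the matter.

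Once the legs are known to be homotopic rel all of $\varphi(P_f)$ (i.e.\ the spiders are \emph{equivalent} in the paper's sense), your Alexander-type strategy is the right idea, though note that $\mathbb{C}\setminus S_0$ is a single simply-connected domain rather than a union of sectors. The paper (Proposition~\ref{prp:spiders_define_teich_point}) organizes the isotopy differently: it builds a nested system of shrinking Jordan neighborhoods $U_n^j$ of the tail subspiders and concatenates countably many local isotopies $\psi_u^n$ supported in successively smaller sets near $\infty$, which automatically delivers the uniform control at infinity that you correctly flag as the main obstacle.
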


This way we obtain a description of quite complicated isotopy types of homeomorphisms relative countably many marked points via a sequence of homotopy types of simple arcs, each relative just finitely many marked points --- this is a serious simplification. Another merit of the construction is that these homotopy types behave well under Thurston iteration (Theorem~\ref{thm:homotopy_type_under_pullback}).

We demonstrate how the construction works for the exponential family, but it is generalizable for functions admitting escape on rays.

\subsection*{Invariant compact subset}

Such subset is described as a list of conditions that stay invariant under $\sigma$. We give a very rough list at the moment, for more details we refer directly to Theorem~\ref{thm:invariant_subset}.

The compact invariant subset is the set of points represented by \qc\ maps $\varphi$ such that :
\begin{enumerate}
	\item $\varphi$ is \idt;
	\item for the marked points $P_f$ there exists an integer $N>0$ not depending on $\varphi$ such that the images under $\varphi$ of the first $N$ points of $P_f$ are inside of the disk $\mathbb{D}_\rho(0)$ with $\rho>0$ not depending on $\varphi$, and images of the other points of $P_f$ are outside of $\mathbb{D}_\rho(0)$;
	\item for very point $z\in P_f$ outside of $\mathbb{D}_\rho(0)$, $\varphi(z)$ is contained in a small disk $U_z$ around $z$ so that the disks $U_z$ are mutually disjoint; 
	\item inside of $\mathbb{D}_\rho(0)$ the distances between marked points are bounded from below;
	\item the isotopy type $\varphi$ relative points of $P_f$ outside of $\mathbb{D}_\rho(0)$ is ``almost'' the isotopy type of identity; 
	\item the isotopy type $\varphi$ relative points of $P_f$ inside of $\mathbb{D}_\rho(0)$ is not ``too complicated'', i.e.\ there are quantitative bounds on how many times the marked points ``twist'' around each other.
\end{enumerate}

Conditions (2),(3) separate the complex plane into two subsets: $\mathbb{D}_\rho(0)$ where we have not so much control on the behavior of $\varphi$ but have finitely many post-singular points, and the complement of $\mathbb{D}_\rho(0)$ where the homotopy information is trivial but we have infinitely many marked points.

Conditions (2)-(4) describe the position in the moduli space, while (5)-(6) encode homotopy information of the point in the \tei\ space.

It is natural to expect that these conditions define a compact subset, invariance is less obvious though.

We provide the construction for the exponential family. Generalizations for more than one singular value require additional conditions on the orbits. 

\subsection*{Strict contraction of the $\sigma$-map}

Item (3) of the ``grand scheme'' is where the infinite-dimensionality of the \tei\ space shows up.

The statement that $\sigma$ is strictly contracting in the classical theorem by Thurston is deduced from the well-known theorem of \tei\ claiming that the \qc\ representative minimizing the \tei\ distance on a surface of finite analytic type is associated to a unique normalized integrable quadratic differential. We consider a surface of infinite analytic type, and in our case this statement is false. 

We are interested in finding some other way to prove strict contraction of $\sigma$ on the \tei\ space. However, this seems to be not an easy task. Fortunately, we can find a way around. If we consider \emph{asymptotically conformal} points in the \tei\ space, then from the Strebel's Frame Mapping Theorem follows the existence of a unique associated normalized integrable quadratic differential, and we can prove strict contraction of $\sigma$ on an asymptotically conformal subspace.

In particular, the following theorem holds (more precise version is Theorem~\ref{thm:sigma_strictly_contracting}).

\begin{thmnonum}
	$\sigma$ is invariant and strictly contracting on the subset of asymptotically conformal points of the \tei\ space.
\end{thmnonum}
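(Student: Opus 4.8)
The plan is to establish the two assertions — invariance and strict contraction — in sequence, with invariance serving as a prerequisite for the contraction argument to make sense on the asymptotically conformal locus $\mathcal{T}_f^{ac}\subset\mathcal{T}_f$. For invariance, I would first recall that a point $[\varphi]\in\mathcal{T}_f$ is asymptotically conformal precisely when its Teichm\"uller class contains a representative whose Beltrami coefficient has a representative vanishing at the boundary (in the appropriate sense for a surface of infinite analytic type, i.e.\ $\esssup$ of $|\mu|$ over the complement of compact sets tends to $0$). The key point is that $\sigma([\varphi])$ is obtained by pulling back the complex structure via $f$, and since $f$ is holomorphic outside a fixed compact set (it agrees with the entire map $f_0$ there, modified only by a capture supported on a compact set), the pullback operation does not increase the essential supremum of the Beltrami coefficient near $\infty$; it merely transports it via a locally conformal map. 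Hence if $|\mu_\varphi|\to 0$ near the ends, so does $|\mu_{\tilde\varphi}|$, and asymptotic conformality is preserved. One must also check this is compatible with the normalization used to define $\sigma$ on the $\id$-type subspace, but that normalization is by post-composition with affine maps, which does not affect the boundary dilatation.

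For strict contraction, I would follow the classical Thurston scheme adapted to the asymptotically conformal setting. Given $[\varphi]\in\mathcal{T}_f^{ac}$, the Teichm\"uller distance to any other point is realized (or approximated) by a Beltrami differential, and the crucial input is \emph{Strebel's Frame Mapping Theorem}: for an asymptotically conformal point, the extremal Beltrami coefficient in its class is of Teichm\"uller form $k\,\bar q/|q|$ for a \emph{unique} integrable holomorphic quadratic differential $q$ on $\mathbb{C}\setminus P_f$ with $\|q\|=1$. This is exactly the ingredient that fails for general points on an infinite-type surface but is rescued by the Strebel condition. The pullback $f^*q$ is then a holomorphic quadratic differential on $\mathbb{C}\setminus P_f$ (using that $P_f$ is forward-invariant and $f$ is holomorphic off the compact modification set, with the finitely many critical points of the capture contributing only at points of $P_f$, hence removable or integrable singularities). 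One then shows $\|f^*q\| < \|q\|$ with a definite gap: the inequality $\|f^*q\|\le\|q\|$ is automatic from the change-of-variables formula and the fact that $f$ has degree $>1$ (indeed infinite degree) over a region where $q$ has positive mass, so some mass is "lost"; strictness with a uniform constant on a given invariant compact set comes from the fact that there is a definite region — a neighborhood of $\mathbb{D}_\rho(0)$ carrying a definite fraction of $\|q\|$, by the lower bounds in the compact invariant set — over which $f$ is genuinely many-to-one.

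The main obstacle I anticipate is making the contraction constant \emph{uniform} over the relevant (compact) subset of $\mathcal{T}_f^{ac}$ rather than merely pointwise strict. Pointwise, the argument $\|f^*q\|<\|q\|$ gives that $\sigma$ does not increase distances and is a weak contraction; but to run the Banach fixed point argument in item (4) of the grand scheme one needs $\dist(\sigma[\varphi],\sigma[\psi])\le \lambda\,\dist([\varphi],[\psi])$ with $\lambda<1$ independent of the points. This requires a compactness argument: on the invariant compact set from Theorem~\ref{thm:invariant_subset}, the quadratic differentials $q$ realizing the extremal maps vary in a controlled family, the "escaping mass" near $\infty$ is uniformly small (precisely because the points are asymptotically conformal in a \emph{uniform} way on a compact set — this should follow from the description of the compact set, where the behavior near $\infty$ is uniformly close to the identity), and the definite loss of mass over the neighborhood of $\mathbb{D}_\rho(0)$ is bounded below uniformly thanks to conditions (2)--(4) of the invariant set. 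I would isolate this as the technical heart of the proof and handle it by a normal-families argument on integrable quadratic differentials, extracting limits and ruling out the degenerate case (all mass escaping to $\infty$) using the uniform asymptotic conformality. A secondary subtlety is verifying that $f^*q$ is genuinely integrable despite $f$ having infinite degree — here one uses that $q$ is integrable and that the "thickening" near the essential singularity at $\infty$ is controlled because the post-singular rays have prescribed asymptotics, so the preimages under $f$ of a neighborhood of $\infty$ are thin in the right sense.
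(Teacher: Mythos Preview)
Your proposal has genuine gaps in both halves of the argument.

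\textbf{Invariance.} Your claim that ``if $|\mu_\varphi|\to 0$ near the ends, so does $|\mu_{\tilde\varphi}|$'' because $f$ is holomorphic outside a compact set is not correct for the exponential. The Beltrami coefficient of $\tilde\varphi$ satisfies $|\mu_{\tilde\varphi}(z)|=|\mu_{\varphi}(f(z))|$ wherever $f$ is conformal, so to get $|\mu_{\tilde\varphi}|$ small outside a compact set you would need $f^{-1}$ of a neighbourhood of $\infty$ to be a neighbourhood of $\infty$. But for $f_0=e^z+\kappa$ the preimage of $\{|w|>R\}$ is only a right half-plane; the entire left half-plane maps into a bounded disk around $\kappa$, and for $z$ with large imaginary part and bounded real part the values $f(z)$ lie in a fixed compact annulus where $|\mu_\varphi|$ need not be small. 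So $\mu_{\tilde\varphi}$ can be bounded away from zero on an unbounded set. The paper handles this with a non-trivial correction step: one first replaces $\varphi$ (at the intermediate stage, after the polynomial but before the exponential) by a map with \emph{globally} small dilatation that agrees with $\varphi$ outside a large disk and at the origin, using a Beurling--Ahlfors extension argument. This costs the isotopy class relative to finitely many marked points, which is then repaired by a compactly supported post-composition. None of this is visible in your sketch.

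\textbf{Strict contraction.} The mass-loss scheme $\|f^*q\|<\|q\|$ does not work here and in fact has the inequality backwards: for a degree-$d$ covering the pullback satisfies $\|f^*q\|=d\,\|q\|$, and for the exponential (infinite degree) $f^*q$ is simply not integrable, as you half-acknowledge. If you meant the pushforward, you face an infinite sum over preimages whose convergence and norm estimate are delicate. The paper avoids this entirely. It takes the extremal Beltrami $k_0\,|q_0|/q_0$ for $\eta=\varphi\circ\psi^{-1}$ (via the Frame Mapping Theorem, as you correctly note), pulls it back through the entire map $g$, and then argues by contradiction: if the pulled-back map were still extremal with the same constant $k_0$, the Teichm\"uller \emph{Uniqueness} Theorem forces $q_0\circ g\cdot (g')^2/q_1$ to be a real constant. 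Great Picard then shows $q_0$ could have at most one pole, which is impossible for an integrable quadratic differential on $\mathbb{C}$ with at least two punctures. This is the key idea you are missing.

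Finally, the theorem asserts only \emph{strict} contraction, not uniform contraction; the fixed-point argument later in the paper uses a variant of Banach's theorem valid for strictly contracting maps on compact metric spaces. Your attempt to extract a uniform $\lambda<1$ via normal families of quadratic differentials is both unnecessary and unlikely to succeed by the route you outline.
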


In the article we prove the theorem for compositions of polynomials with the exponential. For more general families exactly this method does not work, but it might be possible to deduce strict contraction directly from the construction of the invariant compact subset.

\subsection*{Classification Theorem}

As was mentioned earlier, the existence of a fixed point in the invariant compact subset follows by a simple argument of Banach Fixed Point Theorem (the subset is asymptotically conformal). We also show that the entire map corresponding to the constructed fixed point has singular values that escape on rays with the prescribed ``speed of escape'' and ``combinatorics''.

In particular, we reprove the well-known classification of exponential functions with singular values escaping on rays.

\begin{thm}[Classification Theorem for $e^z+\kappa$. \cite{MarkusThesis}]
	\label{thm:main_thm}
	Let $\underline{s}$ be an exponentially bounded external address that is not (pre-)periodic, and $t>t_{\underline{s}}$ be a real number. Then in the family $e^z+\kappa$ there exists a unique entire function $e^z+\kappa_{\underline{s},t}$ such that its singular value $\kappa_{\underline{s},t}$ escapes on rays, has potential $t$ and external address $\underline{s}$.
	
	Conversely, every function in the family $e^z+\kappa$ such that its singular value escapes on non-(pre-)periodic rays is one of these. 
\end{thm}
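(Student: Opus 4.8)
The plan is to realise the four-step ``grand scheme'' described in the introduction, with the pair $(\underline{s},t)$ supplying the prescribed speed of escape and combinatorics.

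\emph{Step 1: the model map and the $\sigma$-map.} Since $\underline{s}$ is exponentially bounded and $t>t_{\underline{s}}$, some exponential map $f_0=e^z+\kappa_0$ has a dynamic ray of potential $t$ and external address $\underline{s}$, hence an escaping orbit with these data (existence of dynamic rays for exponential maps is \cite{SZ-Escaping}). Postcomposing $f_0$ with a capture supported in a compact set yields the quasiregular map $f$ of Section~\ref{sec:iteration_setup}: it coincides with $f_0$ near $\infty$, has a single singular value, and its post-singular orbit escapes on the ray of potential $t$ and address $\underline{s}$. This $f$ defines the Thurston map $\sigma\colon\mathcal{T}_f\to\mathcal{T}_f$ via the Thurston diagram.

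\emph{Step 2: the fixed point.} By Theorem~\ref{thm:invariant_subset} the subset $\mathcal{I}\subset\mathcal{T}_f$ cut out by conditions (1)--(6) is nonempty, compact, $\sigma$-invariant, and consists of asymptotically conformal points; by Theorem~\ref{thm:sigma_strictly_contracting}, $\sigma$ is strictly contracting on $\mathcal{I}$ in the \tei\ metric. A strictly contracting self-map of a nonempty compact metric space has a unique fixed point: the continuous function $[\varphi]\mapsto\dist([\varphi],\sigma[\varphi])$ attains its minimum on $\mathcal{I}$, and strict contraction forces that minimum to be $0$. Hence there is a unique $[\varphi_*]\in\mathcal{I}$ with $\sigma[\varphi_*]=[\varphi_*]$.

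\emph{Step 3: from the fixed point to $e^z+\kappa_{\underline{s},t}$.} That $[\varphi_*]$ is fixed means $\tilde\varphi_*$ is isotopic to $\varphi_*$ rel $P_f$, and the standard Thurston argument then shows that $f$ is conjugate, via a \qc\ map isotopic to $\varphi_*$, to the holomorphic map $g=\varphi_*\circ f\circ\tilde\varphi_*^{-1}$. Since $f$ agrees with $e^z+\kappa_0$ near $\infty$ and $\varphi_*,\tilde\varphi_*$ are of $\id$-type (so they converge to the identity along the post-singular rays), $g$ is entire, of finite order, with one asymptotic value and no critical points; such a map is affinely conjugate to an exponential, and after the normalisation fixed in the setup one gets $g=e^z+\kappa_{\underline{s},t}$ for a unique $\kappa_{\underline{s},t}\in\mathbb{C}$. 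Because $\varphi_*$ is of $\id$-type, it carries the post-singular rays of $f$ onto those of $g$ preserving their asymptotics near $\infty$ (this is precisely what the spider formalism delivers, Theorems~\ref{thm:W_define_teich_point} and \ref{thm:homotopy_type_under_pullback}), so the singular value $\kappa_{\underline{s},t}$ of $g$ escapes on the dynamic ray of potential $t$ and address $\underline{s}$.

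\emph{Step 4: uniqueness, converse, and the main obstacle.} If $e^z+\kappa$ is any member of the family whose singular value escapes on rays with data $(\underline{s},t)$, $\underline{s}$ non-(pre-)periodic, then $e^z+\kappa$ is Thurston-equivalent to $f$ through a pair of $\id$-type maps, hence determines a $\sigma$-fixed point in $\mathcal{I}$; by uniqueness that point is $[\varphi_*]$, and the $\id$-type normalisation pins down $\kappa=\kappa_{\underline{s},t}$. This yields the uniqueness statement, and --- reading off from an arbitrary escaping singular value its potential (automatically $>t_{\underline{s}}$) and its address (automatically exponentially bounded) --- the converse. Granting Theorems~\ref{thm:invariant_subset} and \ref{thm:sigma_strictly_contracting}, the single genuinely delicate step in this assembly is Step 3: one must make sure that the holomorphic map produced by the fixed point really lies in the exponential family and that its singular value sits on exactly the prescribed ray, not merely on some ray of the same potential --- which is exactly where the $\id$-type normalisation, and the fact that the combinatorics encoded by the spider of $f$ is reproduced verbatim under $\sigma$ (Theorem~\ref{thm:homotopy_type_under_pullback}), do the work. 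Of course the real weight of the plan is carried by Theorems~\ref{thm:invariant_subset} and \ref{thm:sigma_strictly_contracting} themselves, above all by the construction and the $\sigma$-invariance of the compact set $\mathcal{I}$.
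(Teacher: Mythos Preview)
Your overall architecture matches the paper's, but there is a real gap in Step~4. You assert that a competing exponential $e^z+\kappa$ with the same data $(\underline{s},t)$ ``determines a $\sigma$-fixed point in $\mathcal{I}$'', and then invoke uniqueness of the fixed point \emph{in $\mathcal{I}$}. But nothing you have said guarantees membership in $\mathcal{I}=\mathcal{C}_f(\rho)$: that set was cut out by the specific quantitative bounds (1)--(4) of Theorem~\ref{thm:invariant_subset} for one fixed $\rho$, and a Thurston equivalence between $f$ and $e^z+\kappa$ gives you an $\id$-type fixed point, not one satisfying those bounds. The paper's uniqueness argument is organised differently and avoids this trap: it switches to $\mathcal{T}_g$ for the already-constructed entire $g$ (so the capture is the identity and $[\id]$ is trivially a fixed point), builds an $\id$-type map $\chi$ carrying the spider of $g$ to that of the competitor $h$, observes that $[\chi]$ is asymptotically conformal and fixed by $\sigma$, and then uses strict contraction on the \emph{whole} asymptotically conformal locus (Theorem~\ref{thm:sigma_strictly_contracting}), not merely on the compact $\mathcal{C}_f(\rho)$, to force $[\chi]=[\id]$ and hence $g=h$. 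Your argument can be repaired along the same lines once you replace ``in $\mathcal{I}$'' by ``asymptotically conformal'' and justify that.

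A second, smaller gap is in Step~3. Saying that $\varphi_*$ ``carries the post-singular rays of $f$ onto those of $g$'' by citing Theorems~\ref{thm:W_define_teich_point} and~\ref{thm:homotopy_type_under_pullback} is not enough: those theorems control homotopy types of legs, not the fact that the image legs are honest ray tails of $g$. The paper supplies the missing analytic step: the dynamic ray of $g$ with address $\underline{s}$ exists (its forward images avoid the singular value), the point on it of potential $t$ has the same asymptotics as $\varphi_*(a_0)$ because $\varphi_*$ is $\id$-type, and then uniform contraction of $g^{-1}$ on tracts near $\infty$ forces $g^N(p)=g^N(\varphi_*(a_0))$ for some $N$; pulling back along the external address then identifies $\varphi_*(a_0)$ with $p$. (Incidentally, ``$f$ is conjugate \dots\ to $g$'' in your Step~3 is an overstatement --- a fixed point gives Thurston equivalence, not conjugacy --- though you immediately write the correct formula $g=\varphi_*\circ f\circ\tilde\varphi_*^{-1}$.)
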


For the definitions of an exponentially bounded external address and $t_{\underline{s}}$ we refer the reader to Subsection~\ref{subsec:esc_dynamics}.

\section{Prerequisites}

\subsection{Escaping dynamics in the exponential family}
\label{subsec:esc_dynamics}

The simplest among the entire functions of finite type are functions having exactly one singular value. Every such function is equal to $A_1\circ\exp\circ A_2$ for some affine maps $A_1,A_2:\mathbb{C}\to\mathbb{C}$ and is conformally conjugate to a function of the form $e^z+\kappa$. Hence, to study dynamical properties of entire functions with one singular value, it is enough to consider the exponential family $e^z+\kappa$.

For the sake of brevity we adopt the following notation.
\begin{defn}[$\mathcal{N}$]
	Denote by $\mathcal{N}$ the family of transcendental entire functions of the form $e^z+\kappa$ for $\kappa\in\mathbb{C}$.
\end{defn}

In this section we assemble some preliminary results and definitions about the structure of the escaping set of functions in $\mathcal{N}$. We follow the exposition in \cite{SZ-Escaping} though use a slightly different notation that is more suitable for us.

It is proven in \cite{SZ-Escaping} for the exponential family, and in \cite{RRRS} for much more general families, that the escaping set of such functions is organized in form of \emph{dynamic rays}.

\begin{defn}[Ray tails]
	\label{dfn:ray_tail}
	Let $f$ be a transcendental entire function. A \emph{ray tail} of $f$ is a continuous curve $\gamma:[0,\infty)\to I(f)$ such that for every $n\geq 0$ the restriction $f^n|_\gamma$ is injective with $\lim_{t\to\infty}f^n(\gamma(t))=\infty$, and furthermore $f^n(\gamma(t))\to\infty$ uniformly in $t$ as $n\to\infty$.
\end{defn}

\begin{defn}[Dynamic rays, escape on rays, endpoints]
	\label{dfn:dynamic_ray}
	A \emph{dynamic ray} of a transcendental entire function $f$ is a maximal injective curve $\gamma:(0,\infty)\to I(f)$ such that $\gamma|_{[t,\infty)}$ is a ray tail for every $t>0$.
	
	If a point $z\in I(f)$ belongs to a dynamic ray, we say that the point \emph{escapes on rays}(because in this case every iterate of the point belongs to a dynamic ray). 
	
	If there exists a limit $z=\lim_{t\to 0}\gamma(t)$, then we say that $z$ is an \emph{endpoint of the dynamic ray} $\gamma$.  
\end{defn}

From now we assume that $f\in\mathcal{N}$. Let $r\in\mathbb{R}$ be such that the right half-plane $\mathbb{H}_r=\{z\in\mathbb{C}:\Re z>r\}$ does not contain the singular value of $f$. Then the preimage of $\mathbb{H}_r$ under $f$ has countably many path-connected components, called \emph{tracts}, having $\infty$ as a boundary point, so that for every tract $T$ the restriction $f|_T$ is a conformal isomorphism (strictly speaking these are tracts of $f^2$ so we slightly abuse the standard terminology). We denote by $T_n$ the tract contained in the infinite strip between the straight lines $\Im z=2\pi i(n-1/2)$ and $\Im z=2\pi i(n+1/2)$. This way we enumerate the tracts by integer numbers. Clearly, this numeration does not depend on a particular choice of $r$.

Let $\gamma:[0,\infty)\to I(f)$ be a ray tail of $f$. Then for every $n\geq 0$ there is $t_n\geq 0$ and $s_n\in\mathbb{Z}$ such that $f^n\circ\gamma|_{[t_n,\infty)}$ is contained in the tract $T_{s_n}$. Moreover, all $t_n$ except finitely many are equal to $0$.

\begin{defn}[External address]
	Let $z$ be a point escaping on rays. We say that $z$ has \emph{external address} $\underline{s}=(s_0 s_1 s_2 ... )$ where $s_n\in\mathbb{Z}$, if each $f^n(z)$ belongs to a ray tail contained in $T_{s_n}$ near $\infty$.
	
	In this case we also say that the dynamic ray (or ray tail) containing $z$ has external address $\underline{s}$. 
\end{defn}

It is clear that the external address does not depend on a particular choice of $\mathbb{H}_r$ in the definition of tracts.

On the set of external addresses we can consider the usual shift-operator $\sigma:(s_0 s_1 s_2 ... )\mapsto (s_1 s_2 s_3 ... )$.

Now we define the function which allows to characterize the ``speed of escape'' of points escaping on rays.

\begin{defn}[$F(t)$]
	Denote by $F:\mathbb{R^+}\to\mathbb{R^+}$ the function $$F(t):=e^t-1.$$
\end{defn}

What one should know about $F$ is that its iterates grow very fast, as shown in the next elementary lemma.

\begin{lmm}[Super-exponential growth of iterates]
	For every $t,k>0$ we have $F^n(t)/e^{n^k}\to\infty$ as $n\to\infty$.
\end{lmm}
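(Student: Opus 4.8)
The plan is to reduce the super-exponential growth statement to a comparison between $F(t)=e^t-1$ and a slightly smaller pure exponential on a half-line $[t_0,\infty)$, and then iterate. First I would observe that the claim is monotone in $t$ in the sense that if $F^{n}(t)/e^{n^k}\to\infty$ for one value $t=t_0>0$, then it holds for all $t\ge t_0$ as well (since $F$ is increasing, $F^n(t)\ge F^n(t_0)$). Also, for any fixed $t>0$ the orbit $F^n(t)$ is strictly increasing and tends to $\infty$: indeed $F(t)-t=e^t-1-t>0$ for $t>0$, and if the orbit were bounded it would converge to a fixed point of $F$, but $F$ has no fixed point in $\mathbb{R}^+$. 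Hence it suffices to prove the statement for all sufficiently large starting values $t$, i.e.\ I may assume $t\ge t_0$ where $t_0$ is chosen below.

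Next I would fix a base $b>1$ (for instance $b=2$) and choose $t_0$ so large that $e^t-1\ge t^{\,b}$ for all $t\ge t_0$; this is possible since $e^t-1$ grows faster than any power of $t$. Then, as long as the orbit stays in $[t_0,\infty)$ — which it does, by the previous paragraph, once it enters — we get $F^{n+1}(t)\ge \bigl(F^n(t)\bigr)^{b}$, and therefore by induction
\[
F^n(t)\ \ge\ t^{\,b^{\,n}}\qquad\text{for all }n\ge 0,\ t\ge t_0 .
\]
Taking $t\ge t_0$ with $t>1$ (enlarge $t_0$ if necessary so that $t_0>1$), we obtain $\log F^n(t)\ge b^{\,n}\log t$, so $\log F^n(t)$ grows at least geometrically in $n$. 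On the other hand $\log e^{n^k}=n^k$ grows only polynomially in $n$. Hence $\log\bigl(F^n(t)/e^{n^k}\bigr)=\log F^n(t)-n^k\ge b^n\log t-n^k\to\infty$, which gives $F^n(t)/e^{n^k}\to\infty$ for all $t\ge t_0$, and by the monotonicity reduction, for all $t>0$.

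I do not expect any serious obstacle here; this is an elementary lemma. The only mildly delicate point is the reduction step: one must make sure the orbit of an arbitrary $t>0$ eventually enters $[t_0,\infty)$ and then stays there, which follows from $F$ being increasing with no positive fixed point and $F(t)>t$. Everything else is the routine estimate $e^t-1\ge t^2$ for large $t$ followed by an iterated exponentiation and the observation that a geometric sequence dominates any polynomial.
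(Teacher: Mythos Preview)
Your argument is correct. The paper states this lemma without proof (it is flagged there as ``elementary''), so there is no authorial argument to compare against; your write-up supplies exactly the kind of routine verification the paper omits. The only point worth tightening is the reduction step: the bare implication ``$F^m(s)/e^{m^k}\to\infty$ for $s\ge t_0$'' does not by itself transfer to smaller $t$ because of the index shift $n\mapsto n-N$ in the denominator, but your explicit lower bound $\log F^m(s)\ge b^{m}\log s$ does, since $b^{\,n-N}\log s - n^k\to\infty$ just as well. You already have this bound, so the proof goes through as written.
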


Next definition relates the last two notions.

\begin{defn}[Exponentially bounded external address, $t_{\underline{s}}$]
	\label{dfn:exp_bdd_address}
	We say that the sequence $\underline{s}=(s_0 s_1 s_2 ... )$ is \emph{exponentially bounded} if there exists $t>0$ such that $s_n/{F^n(t)}\to 0$ as $n\to\infty$. The infimum of such $t$ we denote by $t_{\underline{s}}$.	
\end{defn}

Next statement claims that no other type of external addresses can appear for points escaping on rays.

\begin{lmm}[Only exponentially bounded $\underline{s}$ \cite{SZ-Escaping}]
	\label{lmm:only_exp_bdd}
	If $z\in I(f)$ escapes on rays and has external address $\underline{s}=(s_0 s_1 s_2 ... )$, then $\underline{s}$ is exponentially bounded.	
\end{lmm}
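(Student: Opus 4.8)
The plan is to exploit the conformal isomorphism $f|_T \colon T \to \mathbb{H}_r$ on each tract, together with the Koebe distortion theorem, to get a quantitative grip on how the imaginary part (hence the tract index $s_n$) of a ray-tail point can grow in terms of its real part. Concretely, if $z$ escapes on rays with external address $\underline{s} = (s_0 s_1 s_2 \ldots)$, then for all large $n$ the point $f^n(z)$ lies deep in the tract $T_{s_n}$, and $f(f^n(z)) = f^{n+1}(z)$ lies in $\mathbb{H}_r$. The key point is that $|{\Im z'}|$ for $z' \in T_m$ far out in the tract is comparable to $2\pi|m|$, while the pullback $f^{-1}\colon \mathbb{H}_r \to T_m$ is essentially a branch of $\log$ followed by a translation, so the real part $\Re(f^n(z))$ controls $|s_n|$ roughly like $|s_n| \lesssim \const \cdot e^{\Re(f^{n+1}(z))}$ — equivalently, $\Re(f^{n+1}(z)) \gtrsim \log |s_n| - \const$.

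The main steps, in order, would be: (i) fix $r$ so $\mathbb{H}_r$ avoids the singular value, and record that on any tract $T_m$, for points with real part at least some absolute constant, one has two-sided estimates relating $|\Im(\cdot)|$ to $|m|$ and the geometry of the tract to a half-plane via Koebe (the tracts are uniformly "fat" far out since $f = e^z + \kappa$ is explicit — the tract $T_m$ is a neighborhood of the horizontal line $\Im z = 2\pi m$ bounded by curves asymptotic to it); (ii) derive from this that there is a constant $C > 0$ with $\Re(f^{n+1}(z)) \ge \log(|s_n|+2) - C$ once $f^n(z)$ is far enough out in its tract, which happens for all $n \ge n_0$; (iii) set $x_n := \Re(f^n(z))$ and observe that, since $f^{n}(z)$ escapes, $x_n \to +\infty$, and for $n$ large $x_{n+1} = \Re(e^{f^n(z)} + \kappa) = e^{x_n}\cos(\Im f^n(z)) + \Re\kappa$; on a ray tail far out the argument stays near $0$ so $\cos$ is bounded below, giving $x_{n+1} \ge \tfrac12 e^{x_n} - \const \ge F(x_n - \const')$ for large $n$, i.e.\ the real parts dominate an $F$-orbit up to a shift; (iv) combine (ii) and the fact that $x_{n+1} \le e^{x_n} + |\kappa|$ with the iteration of $F$: unwinding, $x_n \le F^{n - n_0}(x_{n_0}) + \const$, so from (ii), $\log(|s_{n}|+2) \le x_{n+1} + C \le F^{\,n+1-n_0}(x_{n_0}) + \const$, and therefore $|s_n| \le \exp(F^{\,n+1-n_0}(t')) $ for suitable $t' > 0$; (v) conclude $s_n / F^{n}(t) \to 0$ for $t$ slightly larger than $t'$ — here one uses that $\exp \circ F^{m} $ grows strictly slower than $F^{m+k}$ for any $k\ge 1$, which is immediate since $F(u) = e^u - 1 \ge e^{u}/2$ for $u$ large, so $F^{k}(u)$ for $k\ge 2$ already dwarfs $e^u$; a clean way to package this is to absorb the outer $\exp$ into one extra iterate of $F$ and then invoke the super-exponential growth lemma to beat the shift in the index.

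The step I expect to be the main obstacle is (iii)–(iv): controlling the argument $\Im(f^n(z))$ well enough that $\cos(\Im f^n(z))$ stays bounded away from $0$ (or at least doesn't destroy the lower bound on $x_{n+1}$), and simultaneously making the "ray tail" hypothesis do real work rather than just the "escaping" hypothesis. The subtlety is that a priori $\Im(f^n(z))$ could be close to an odd multiple of $\pi$, making $\Re(f^{n+1}(z))$ very negative even though $|f^{n+1}(z)|$ is huge; the ray-tail structure rules this out because the ray tail through $f^{n}(z)$ is, far out, essentially a horizontal trajectory inside its tract $T_{s_n}$ on which $f$ acts injectively and pushes toward $\infty$ in a controlled direction — so one needs the description of ray tails from \cite{SZ-Escaping} giving that, far out in $T_m$, a ray tail has real part tending to $+\infty$ and imaginary part staying within a bounded distance of $2\pi m$. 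Once that is in hand, $\Re(f^{n+1}(z)) = e^{\Re f^n(z)}\cos(\Im f^n(z)) + \Re\kappa$ is genuinely large and positive, and the argument closes.

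The remaining estimates (Koebe distortion on the tracts, iterating the inequality $x_{n+1}\ge F(x_n - c)$, and the final comparison $\exp\circ F^m \ll F^{m+1}$) are routine, so I would state them as short lemmas or fold them into the computation and refer to the super-exponential growth lemma for the final step.
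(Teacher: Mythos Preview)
The paper does not give its own proof of this lemma: it is stated in the preliminaries with a citation to \cite{SZ-Escaping} and no proof environment follows. So there is nothing in the paper to compare your argument against directly; the intended ``proof'' is simply the reference.

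Your sketch is in the right spirit and is essentially the argument from \cite{SZ-Escaping}, but it is more elaborate than necessary and contains two points worth flagging. First, an index slip in step~(ii): since $|f^{n}(z)| = |e^{f^{n-1}(z)}+\kappa|\le e^{x_{n-1}}+|\kappa|$ and $2\pi|s_n|$ differs from $|\Im f^{n}(z)|$ by a bounded amount, the correct inequality is $\log(|s_n|+2)\le x_{n-1}+C$, not $x_{n+1}$. Second, and more importantly, step~(iii) --- the lower bound $x_{n+1}\ge F(x_n-c)$ requiring control of $\cos(\Im f^n(z))$ --- is not needed for this lemma at all. Exponential boundedness of $\underline{s}$ only asks for an \emph{upper} bound on $|s_n|$, and for that the trivial estimate $x_{n+1}\le e^{x_n}+|\kappa|$ suffices: iterate it to get $x_n\le F^{n}(t)$ for some $t$ and all large $n$, then combine with the corrected (ii) to obtain $|s_n|\le C'F^{n}(t)$, whence $s_n/F^{n}(t')\to 0$ for any $t'>t$. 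The cosine issue you worry about is exactly the obstacle for the \emph{converse} direction (realising a given exponentially bounded address), not for this one. Finally, Koebe distortion is overkill here: for $f(z)=e^{z}+\kappa$ the tracts and the relation $|\Im z - 2\pi m|\le\pi$ on $T_m$ are explicit, so no distortion estimate is needed.
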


The key statement about the escaping set of functions in $\mathcal{N}$ is the following theorem which is an accumulation of different results (of interest for us) from \cite{SZ-Escaping}.

\begin{thm}[Escape on rays and asymptotic formula \cite{SZ-Escaping}]
	\label{thm:as_formula}
	Let $f\in\mathcal{N}$. Then for every exponentially bounded external address there exists a unique dynamic ray realizing it.
	
	Further, if $\mathcal{R}_{\underline{s}}$ is the dynamic ray having exponentially bounded external address $\underline{s}=(s_0 s_1 s_2 ... )$, and no strict forward iterate of $\mathcal{R}_{\underline{s}}$ contains the singular value of $f$, then $\mathcal{R}_{\underline{s}}$ can be parametrized by $t\in(t_{\underline{s}},\infty)$ so that 
	\begin{equation}
		\label{eqn:as_formula}
		\mathcal{R}_{\underline{s}}(t)=t+2\pi i s_0 + O(e^{-t/2}),
	\end{equation}
	and $$f^n\circ \mathcal{R}_{\underline{s}}=\mathcal{R}_{\sigma^n \underline{s}}\circ F^n.$$ Asymptotic bounds $O(.)$ for $\mathcal{R}_{\sigma^n\underline{s}}(t)$ are uniform in $n$ on every ray tail contained in $\mathcal{R}_{\underline{s}}$.
	
	Every escaping point is mapped after finitely many iterations either on a dynamic ray or to an endpoint of a ray. If the singular value of $f$ does not escape, then all dynamic rays are parametrized as above, and every escaping point escapes either on rays or as endpoints of rays.
\end{thm}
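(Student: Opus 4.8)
The plan is to assemble these statements from \cite{SZ-Escaping}, where they are established by an explicit construction of the dynamic rays together with a functional-equation analysis; I will only indicate the structure of the argument and where the hypotheses enter.

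\emph{Existence and uniqueness of rays.} First I would fix $r$ large enough that $\mathbb{H}_r$ contains no singular value of $f$, so that each tract $T_n$ maps conformally onto $\mathbb{H}_r$ and has a well-defined inverse branch $L_n$ from $\mathbb{H}_r$ into $T_n$. Given an exponentially bounded address $\underline{s}=(s_0s_1\dots)$, one seeks a curve $\mathcal{R}_{\underline{s}}$ solving the functional equation $f(\mathcal{R}_{\underline{s}}(t))=\mathcal{R}_{\sigma\underline{s}}(F(t))$, i.e.\ a fixed point of the operator that pulls a candidate curve lying in $T_{s_1}$ near $\infty$ back by $L_{s_0}$ after the reparametrization $t\mapsto F(t)$. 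Since $\abs{f'(z)}=\abs{e^z}$ is very large on $\mathbb{H}_r$, the branches $L_n$ are uniformly strongly contracting there; combined with the super-exponential growth of $F^n$ (the lemma above) and the bound $s_n/F^n(t)\to0$, the iterated pullbacks converge, uniformly on ray tails, to a unique curve $\mathcal{R}_{\underline{s}}$. Maximal injective extension then yields the dynamic ray, and the same contraction estimate forces any ray tail with itinerary $\underline{s}$ to agree near $\infty$ with the constructed one, giving uniqueness.

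\emph{Asymptotic formula, functional equation, uniformity.} The identity $f^n\circ\mathcal{R}_{\underline{s}}=\mathcal{R}_{\sigma^n\underline{s}}\circ F^n$ holds by construction, the reparametrization by $F$ being built into the fixed-point operator; here the hypothesis that no forward iterate of $\mathcal{R}_{\underline{s}}$ contains the singular value is what guarantees that the relevant inverse branches exist all along the orbit. Writing $\mathcal{R}_{\underline{s}}(t)=t+2\pi is_0+\delta(t)$, substituting into $f(\mathcal{R}_{\underline{s}}(t))=\mathcal{R}_{\sigma\underline{s}}(F(t))$ and using $e^{2\pi is_0}=1$ gives $e^{t}\bigl(e^{\delta(t)}-1\bigr)=-1-\kappa+2\pi is_1+\delta_{\sigma\underline{s}}(F(t))$, so that $\delta(t)=O(e^{-t})$ for a fixed address. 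The weaker bound $O(e^{-t/2})$ is retained because, after applying $f^n$ and reading this estimate for $\mathcal{R}_{\sigma^n\underline{s}}$ at the point $F^n(t)$, the term $2\pi is_{n+1}$ must be absorbed, and exponential boundedness of $\underline{s}$ is exactly what ensures $e^{-F^n(t)/2}\abs{s_{n+1}}\to0$ uniformly on each ray tail of $\mathcal{R}_{\underline{s}}$.

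\emph{Classification of escaping points, and the main obstacle.} That every escaping point lands after finitely many iterations on a dynamic ray or on an endpoint of one, and that — when the singular value does not escape — all rays admit the parametrization above while every escaping point escapes on rays or as a ray endpoint, are the structural theorems of \cite{SZ-Escaping}: $I(f)$ is a disjoint union of the dynamic rays together with some of their endpoints $\lim_{t\to0}\mathcal{R}_{\underline{s}}(t)$, with the endpoints the only escaping points not on rays; if the singular value escapes, the forward orbit of its ray may fail to be injective, which is why one restricts to rays no strict forward iterate of which hits the singular value. Since the statement is a compilation, citing \cite{SZ-Escaping} is all that is needed; were one to reprove it, the delicate points are (i) making the pullback contraction uniform over all tracts and strong enough to beat the growth of $F^n$, and (ii) propagating the error term through the iteration so that the $O(e^{-t/2})$ bound is genuinely uniform in $n$ — precisely where exponential boundedness is used.
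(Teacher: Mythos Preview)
Your proposal is correct and matches the paper's treatment: the theorem is stated in the prerequisites section with attribution to \cite{SZ-Escaping} and no proof is given in the paper itself. Your sketch of the contraction-mapping construction of rays and the derivation of the asymptotic error term is a reasonable summary of how the result is obtained in the cited reference, but for the purposes of this paper the citation alone suffices.
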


The theorem above allows us to define the notion of potential.

\begin{defn}[Potential]
	Let $f\in \mathcal{N}$ and assume that $z$ escapes on rays with escaping address $\underline{s}$. We say that $t$ is the potential of $z$ if $\abs{f^n(z)-F^n(t)-2\pi is_n}\to 0$ as $n\to\infty$.	
\end{defn}

From Theorem~\ref{eqn:as_formula} follows that every point escaping on rays (without singular values on forward iterates) has a well-defined potential, and different points on the same ray have different potentials.

\subsection{Quasiconformal maps}
Standard references are \cite{Ahlfors,LehtoVirtanen}. We start with a few preliminary definitions.

\begin{defn}[Quadrilateral]
	A \emph{quadrilateral} $Q(z_1,z_2,z_3,z_4)$ is a Jordan domain $Q$ together with a sequence $z_1,z_2,z_3,z_4$ of boundary points called vertices of the quadrilateral. Order of vertices agrees with the positive orientation with respect to $Q$. Arcs $z_1 z_2$ and $z_3 z_4$ are called $a$-sides, arcs $z_2 z_3$ and $z_4 z_1$ are called $b$-sides.
\end{defn}

Every quadrilateral $Q$ is conformally equivalent to the unique canonical rectangle with the length of $b$-sides equal to 1. For a quadrilateral $Q$, the length of the $a$-sides of the canonical rectangle is called a (conformal) \emph{modulus} of $Q$ and is denoted by $\mod Q$. 

\begin{defn}[Maximal dilatation]
	Let $U$ be a plane domain and $\psi$ be an orientation-preserving homeomorphism of $U$. The \emph{maximal dilatation} of $\psi$ is called the number
	
	\begin{center}
		$K(\psi)=\sup_{\overline{Q}\subset U}\frac{\mod \psi(Q)}{\mod Q}$,\\
	\end{center}
	where the supremum is taken over all quadrilaterals $Q$ contained in $G$ together with its boundary.
\end{defn}

Now we can define \qc\ maps.

\begin{defn}[Quasiconformal map]
	An orientation-preserving homeomorphism $\psi$ of a plane domain $U$ is called quasiconformal if its maximal dilatation $K(\psi)$ is finite. If $K(\psi)\leq K<\infty$, then $\psi$ is called $K$-quasiconformal.
\end{defn}

It is easy to show that the inverse of a $K$-quasiconformal mapping is $K$-quasiconformal, and the composition of a $K_1$-quasiconformal and $K_2$-quasiconformal mapping is $K_1 K_2$-quasiconformal.

We also provide the analytic definition of quasiconformal maps. It is equivalent to the previous one.

\begin{defn}[Quasiconformal map]
	A homeomorphism $\psi$ of a plane domain $U$ is quasiconformal if there exists $k<1$ such that
	
	\begin{enumerate}
		\item $\psi$ has locally integrable, distributional derivatives $\psi_z$ and $\psi_{\overline{z}}$ on $U$, and
		\item $\abs{\psi_{\overline{z}}} \leq k\abs{\psi_z}$ almost everywhere.
	\end{enumerate}
	
	Such $\psi$ is called $K$-quasiconformal, where $K=\frac{1+k}{1-k}$.
\end{defn}

Every quasiconformal map is determined by its Beltrami coefficient.

\begin{defn}[Beltrami coefficient]
	The function $\mu_\psi(z)=\psi_{\overline{z}}(z)/{\psi_z (z)}$ (defined a.e. on $U$) is called the \emph{Beltrami coefficient} of $\psi$.
\end{defn}

Providing the Beltrami coefficient is almost the same as giving a quasiconformal map. Suppose $\mu(z)$ is a measurable complex-valued function defined on a domain $U\subset\hat{\mathbb{C}}$ for which $\lvert\lvert\mu\rvert\rvert_{L^\infty}=k<1$. We can ask whether it is possible to find a quasiconformal map $\psi$ satisfying the Beltrami equation
\begin{equation}
	\label{eqn:Beltrami}
	\psi_{\overline{z}}(z)=\mu (z) \psi_z (z)
\end{equation}
where the partial derivatives $\psi_z (z)$ and $\psi_{\overline{z}}(z)$ are defined in the sense of distributions and are locally integrable.

The answer for $U=\hat{\mathbb{C}}$ is contained in Measurable Riemann Mapping Theorem.

\begin{thm}[Measurable Riemann Mapping Theorem \cite{Gardiner}]
	The Beltrami equation~\ref{eqn:Beltrami} gives a one-to-one correspondence between the set of quasiconformal homeomorphisms of $\hat{\mathbb{C}}$ that fix the points $0,1$ and $\infty$ and the set of measurable complex-valued functions $\mu$ on $\hat{\mathbb{C}}$ for which $\lvert\lvert\mu\rvert\rvert_{L^\infty}<1$. Furthermore, the normalized solution $\psi^\mu$ depends holomorphically on $\mu$.
\end{thm}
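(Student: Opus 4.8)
The final statement is the Measurable Riemann Mapping Theorem, and the plan is to follow the Ahlfors--Bers strategy: construct the normalized solution first for regular $\mu$ by inverting a singular integral operator, then pass to arbitrary measurable $\mu$ by a normal-families argument. As a first reduction I would arrange that $\mu$ has compact support. A general $\mu$ with $\|\mu\|_{L^\infty}=k<1$ splits as $\mu=\mu_1+\mu_2$ with $\mu_1,\mu_2$ supported on disjoint sets, one of them a neighborhood of $\infty$; the involution $z\mapsto 1/z$ of $\hat{\mathbb{C}}$ transports the piece near $\infty$ to a compactly supported problem, and the two \qc\ partial solutions are composed, the chain rule for Beltrami coefficients showing that the composite solves the original equation after renormalizing by a M\"obius map fixing $0,1,\infty$. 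So assume $\mu$ is measurable with $\|\mu\|_{L^\infty}=k<1$ and supported in a fixed disk.

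The two analytic workhorses are the Cauchy transform $Ph(z)=-\frac1\pi\int_{\mathbb{C}}\frac{h(\zeta)}{\zeta-z}\,dA(\zeta)$ ($dA$ planar Lebesgue measure), which satisfies $\partial_{\bar z}(Ph)=h$, and the Beurling transform $T:=\partial_z\circ P$, a Calder\'on--Zygmund singular integral. The key input I would quote is that $T$ is an isometry of $L^2(\mathbb{C})$ and, by Calder\'on--Zygmund theory together with Riesz--Thorin interpolation, bounded on $L^p(\mathbb{C})$ for every $1<p<\infty$ with operator norm $\|T\|_p\to 1$ as $p\to 2$. Hence one fixes $p>2$ with $k\,\|T\|_p<1$, so $I-\mu T$ is invertible on $L^p$ by a Neumann series; set $h:=(I-\mu T)^{-1}\mu=\sum_{n\ge 0}(\mu T)^n\mu\in L^p$ and $\psi:=z+Ph$. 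A direct computation gives $\psi_{\bar z}=h$ and $\psi_z=1+Th$, whence $\psi_{\bar z}=\mu(1+Th)=\mu\,\psi_z$, i.e.\ $\psi$ solves the Beltrami equation; since $p>2$, $Ph$ is $(1-2/p)$-H\"older continuous and $\psi(z)=z+O(1/\abs{z})$ near $\infty$, so after an affine normalization $\psi$ fixes $0,1,\infty$.

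Next I would show $\psi$ is a \qc\ homeomorphism. For smooth compactly supported $\mu$, elliptic bootstrapping promotes $\psi$ to a smooth map with Jacobian $\abs{\psi_z}^2-\abs{\psi_{\bar z}}^2=\abs{\psi_z}^2(1-\abs{\mu}^2)>0$, so $\psi$ is an orientation-preserving local diffeomorphism; being proper of degree one (from its behavior near $\infty$) it is a global diffeomorphism, and $\abs{\psi_{\bar z}}\le k\abs{\psi_z}$ gives $K(\psi)\le\frac{1+k}{1-k}$. For general measurable $\mu$, pick smooth $\mu_n\to\mu$ a.e.\ with $\|\mu_n\|_{L^\infty}\le k$; the normalized solutions $\psi_n$ form a normal family of uniformly $\frac{1+k}{1-k}$-\qc\ maps fixing $0,1,\infty$, so a subsequence converges locally uniformly to a \qc\ map $\psi$, and the uniform $L^p$ bounds on $h_n=(\psi_n)_{\bar z}$ let one pass to the limit in $\psi_n=z+Ph_n$, $h_n=\mu_n(1+Th_n)$, to get $\psi_{\bar z}=\mu\psi_z$. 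Uniqueness is Weyl's lemma: two normalized solutions with the same $\mu$ differ by a $1$-\qc\ self-map of $\hat{\mathbb{C}}$, which is conformal, hence a M\"obius map fixing $0,1,\infty$, hence the identity; so $\mu\mapsto\psi^\mu$ is the asserted bijection. Holomorphic dependence follows because each $(\mu T)^n\mu$ is a homogeneous polynomial in $\mu$, the Neumann series converges locally uniformly in $L^p$ on $\{\|\mu\|_{L^\infty}<1\}$, so $\mu\mapsto h$ and hence $\mu\mapsto\psi^\mu(z)$ is holomorphic for each fixed $z$; this survives the locally uniform limit by Weierstrass's theorem in the $\mu$-variable.

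I expect the genuine difficulties to be twofold. The first is the harmonic-analytic package itself --- that $T$ is bounded on $L^p$ with norm tending to $1$ at $p=2$ (Calder\'on--Zygmund estimates plus interpolation), which is the real engine of the whole argument. The second is proving that the integral-equation solution $\psi$ is actually injective and surjective: the route through smoothness, properness and degree for regular $\mu$, and then the delicate limiting argument that simultaneously preserves the homeomorphism property, the normalization, and the Beltrami equation when passing to an arbitrary measurable $\mu$.
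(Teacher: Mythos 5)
This statement is quoted background: the paper gives no proof of the Measurable Riemann Mapping Theorem, citing \cite{Gardiner} for it, so there is no internal argument to compare yours against. Your outline is the classical Ahlfors--Bers proof (Cauchy and Beurling transforms, Calder\'on--Zygmund $L^p$ bounds with $\|T\|_p\to 1$ as $p\to 2$, Neumann series for the principal solution, smoothing plus normal families for general measurable $\mu$, Weyl's lemma for uniqueness, and power-series dependence on $\mu$ for holomorphy), which is exactly the route taken in the standard references and is essentially correct as a sketch. The only places where your write-up is thin are routine but worth acknowledging: the reduction to compactly supported $\mu$ needs the composition formula for Beltrami coefficients and a renormalizing M\"obius map, and the holomorphic dependence for general (non-compactly supported) $\mu$ has to be threaded through that same decomposition rather than just ``Weierstrass in the $\mu$-variable''; neither point is a genuine gap.
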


\subsection{Beurling-Ahlfors extension}

Next, we formulate two theorems that help us to reconstruct a \qc\ map on a half-plane from its boundary values given by a \emph{quasisymmetric function}.

\begin{defn}[Quasisymmetric function]
	\label{defn:quasisymmetry}
	Let $\xi$ be a continuous strictly increasing self-homeomorphism of the real line. We say that $\xi$ is \emph{$\rho$-quasisymmetric} if there exists a positive constant $\rho$ such that for every $t>0$ and every $x\in\mathbb{R}$ we have
	$$\frac{1}{\rho}\leq\frac{\xi(x+t)-\xi(x)}{\xi(x)-\xi(x-t)}\leq\rho.$$
\end{defn}

The first theorem basically says that the restriction of the \qc\ self-homeomorphism of the upper half plane is quasisymmetric.

\begin{thm}[Boundary values of homeomorphisms {\cite[Theorem~6.2]{LehtoVirtanen}}]
	Let $\varphi$ be a self-homeomorphism of the closure of the upper half-plane fixing $\infty$ such that for every quadrilateral $Q$ with boundary on the real line holds 
	$$\mod\varphi(Q)\leq K\mod Q$$ for some $K\geq 1$.
	
	Then for every $t>0$ and every $x\in\mathbb{R}$ holds
	$$\frac{1}{\lambda(K)}\leq\frac{\varphi(x+t)-\varphi(x)}{\varphi(x)-\varphi(x-t)}\leq\lambda(K),$$
	where $\lambda:[0,\infty]\to\mathbb{R}$ is a continuous function not depending on $\varphi$ so that $\lambda(1)=1$. 
\end{thm}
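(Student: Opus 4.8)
The plan is to reduce the claimed quasisymmetry inequality to a single monotonicity property of the modulus of a half-plane quadrilateral having one vertex at $\infty$. Throughout, write $\mathbb{H}$ for the open upper half-plane, and note that $\varphi$ restricts to an orientation-preserving self-homeomorphism of $\mathbb{R}\cup\{\infty\}$ fixing $\infty$, hence to an increasing self-homeomorphism of $\mathbb{R}$; in particular, for $x\in\mathbb{R}$ and $t>0$ the points $\varphi(x-t)<\varphi(x)<\varphi(x+t)$ together with $\infty$ are again, in positive cyclic order, the vertices of a half-plane quadrilateral. For $\rho>0$ set $Q_\rho:=\mathbb{H}(0,1,1+\rho,\infty)$ and $m(\rho):=\mod Q_\rho$; by affine invariance of the modulus, $\mod\mathbb{H}(a,b,c,\infty)=m\bigl(\tfrac{c-b}{b-a}\bigr)$ for all reals $a<b<c$.

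\textbf{Step 1 (two properties of $m$).} First, $m\colon(0,\infty)\to(0,\infty)$ is continuous and strictly monotone: it is strictly decreasing, with $m(\rho)\to\infty$ as $\rho\to0^+$ and $m(\rho)\to0$ as $\rho\to\infty$. Continuity is the continuous dependence of the modulus on the quadrilateral, and strict monotonicity follows from the comparison principle for moduli of quadrilaterals (or directly from the elliptic-integral expression for $m$). Second, cyclically permuting the vertices of any quadrilateral replaces its modulus by the reciprocal; applied to $\mathbb{H}(a,b,c,\infty)$ and combined with affine invariance, this gives the functional equation $m(1/\rho)=1/m(\rho)$, whence $m(1)=1$. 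Consequently $m^{-1}$ is a continuous strictly decreasing self-bijection of $(0,\infty)$ with $m^{-1}(1/s)=1/m^{-1}(s)$ and $m^{-1}(1)=1$.

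\textbf{Step 2 (applying the hypothesis).} Apply the hypothesis to the quadrilateral $Q:=\mathbb{H}(x-t,x,x+t,\infty)$ and to its cyclic shift $Q':=\mathbb{H}(x,x+t,\infty,x-t)$. Since the three base points are equally spaced, $\mod Q=m(1)=1$ and $\mod Q'=1/\mod Q=1$. Put $\rho_\varphi:=\dfrac{\varphi(x+t)-\varphi(x)}{\varphi(x)-\varphi(x-t)}$. Then $\varphi(Q)=\mathbb{H}(\varphi(x-t),\varphi(x),\varphi(x+t),\infty)$ has $\mod\varphi(Q)=m(\rho_\varphi)$, while $\varphi(Q')$ is its cyclic shift and has $\mod\varphi(Q')=1/m(\rho_\varphi)$. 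The bound $\mod\varphi(Q)\le K\mod Q$ gives $m(\rho_\varphi)\le K$, hence $\rho_\varphi\ge m^{-1}(K)$ by monotonicity of $m^{-1}$; the bound applied to $Q'$ gives $1/m(\rho_\varphi)\le K$, i.e.\ $m(\rho_\varphi)\ge 1/K$, hence $\rho_\varphi\le m^{-1}(1/K)$. Setting $\lambda(K):=m^{-1}(1/K)=1/m^{-1}(K)$ yields $\tfrac{1}{\lambda(K)}\le\rho_\varphi\le\lambda(K)$ for all $x\in\mathbb{R}$ and all $t>0$, with $\lambda(1)=1$, and $\lambda$ continuous and independent of $\varphi$ because $m^{-1}$ is; extending $\lambda$ by $1$ on $[0,1]$ matches the stated domain $[0,\infty]$.

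The main obstacle is Step 1: isolating and justifying the two structural properties of $m$. Neither is deep — continuity and strict monotonicity of moduli of quadrilaterals, and the reciprocal-under-cyclic-permutation identity (equivalently, that a quadrilateral and its conjugate have reciprocal moduli) are classical facts from \cite{LehtoVirtanen,Ahlfors} — but they are the only nontrivial input, and Step 2 is then an immediate computation. In the write-up I would invoke these properties with a reference rather than reprove them; and if the hypothesis is read as quantifying only over quadrilaterals with all four vertices finite, I would recover the case of the vertex at $\infty$ by letting the fourth base point tend to $+\infty$ and using continuity of the modulus.
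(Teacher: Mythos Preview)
The paper does not give its own proof of this theorem: it is quoted from \cite[Theorem~6.2]{LehtoVirtanen}, followed only by the remark that the Lehto--Virtanen argument carries over verbatim to the slightly weaker hypothesis (modulus distortion only for half-plane quadrilaterals with vertices on $\mathbb{R}\cup\{\infty\}$, rather than full $K$-quasiconformality). So there is no in-paper proof to compare against.

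Your argument is correct and is in fact the standard proof one finds in Lehto--Virtanen: take the half-plane quadrilateral on $(x-t,x,x+t,\infty)$, use that its modulus is $m(1)=1$, apply the modulus bound to it and to its cyclic shift, and invert the strictly monotone function $m(\rho)=\mod\mathbb{H}(0,1,1+\rho,\infty)$, exploiting the reciprocal symmetry $m(1/\rho)=1/m(\rho)$ to identify the two bounds as $\lambda(K)$ and $1/\lambda(K)$. Your handling of the edge issues (continuity/monotonicity of $m$ by reference, and passing to a vertex at $\infty$ by a limit if only finite-vertex quadrilaterals are allowed in the hypothesis) is appropriate.
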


\begin{remark}
	Strictly speaking, the authors in \cite{LehtoVirtanen} prove the theorem for $K$-\qc\ maps of the upper half plane but exactly the same proof works in the setting as above.
\end{remark}

The second theorem says that quasisymmetric maps can be promoted to \qc\ maps with a sharp estimate on the maximal dilatation.

\begin{thm}[Beurling-Ahlfors extension \cite{BA}]
	\label{thm:BA-extension}
	Let $\xi:\mathbb{R}\to\mathbb{R}$ be a function that is $\rho$-quasisymmetric. Then it extends to a $K$-\qc\ self-homeomorphism of the upper half-plane with $K\leq\rho^2$. 
\end{thm}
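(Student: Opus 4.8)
\medskip

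The plan is to realise the extension by the classical Beurling--Ahlfors averaging formula and to estimate its Beltrami coefficient pointwise in terms of $\rho$. Given the $\rho$-quasisymmetric homeomorphism $\xi:\mathbb{R}\to\mathbb{R}$, define on the upper half-plane $\mathbb{H}=\{z=x+iy:y>0\}$ the map $F=u+iv$ by
\[
u(x,y)=\frac{1}{2y}\int_{x-y}^{x+y}\xi(s)\,ds,\qquad v(x,y)=\frac{1}{2y}\Bigl(\int_{x}^{x+y}\xi(s)\,ds-\int_{x-y}^{x}\xi(s)\,ds\Bigr).
\]
Since $\xi$ is continuous, $u(x,y)\to\xi(x)$ and $v(x,y)\to 0$ as $y\to 0^{+}$, so $F$ extends continuously to $\overline{\mathbb{H}}$ with $F|_{\mathbb{R}}=\xi$, and $F(\infty)=\infty$ because $\xi$ is a self-homeomorphism of $\mathbb{R}$. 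As $\xi$ is strictly increasing, $\int_{x}^{x+y}\xi>y\,\xi(x)>\int_{x-y}^{x}\xi$, hence $v>0$ on $\mathbb{H}$, i.e.\ $F$ maps $\mathbb{H}$ into itself.

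Next I would verify that $F$ is a $C^{1}$, orientation-preserving homeomorphism of $\mathbb{H}$. With the abbreviations $p:=\xi(x+y)-\xi(x)>0$, $q:=\xi(x)-\xi(x-y)>0$, $P:=\int_{0}^{y}\bigl(\xi(x+t)-\xi(x)\bigr)\,dt$, $Q:=\int_{0}^{y}\bigl(\xi(x)-\xi(x-t)\bigr)\,dt$, differentiating the integrals with variable limits (so that no derivative of $\xi$ is needed) gives
\[
u_{x}=\frac{p+q}{2y},\qquad v_{x}=\frac{p-q}{2y},\qquad u_{y}=\frac{p-q}{2y}-\frac{P-Q}{2y^{2}},\qquad v_{y}=\frac{p+q}{2y}-\frac{P+Q}{2y^{2}},
\]
all continuous on $\mathbb{H}$ because $\xi$ is. A short computation yields the Jacobian
\[
J=u_{x}v_{y}-u_{y}v_{x}=\frac{p\,(q-Q/y)+q\,(p-P/y)}{2y^{2}},
\]
and since $\xi$ is strictly increasing one has $0<P/y<p$ and $0<Q/y<q$, hence $J>0$ on all of $\mathbb{H}$, so $F$ is a local orientation-preserving diffeomorphism. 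To promote this to a homeomorphism of $\mathbb{H}$ I would use that $u_{x}>0$ makes $x\mapsto F(x+iy)$ injective on each horizontal line, that $v(x,y)\to 0$ as $y\to 0$ and $v(x,y)\to+\infty$ as $y\to+\infty$ makes $F$ a proper map $\mathbb{H}\to\mathbb{H}$, and then conclude by a degree/covering-space argument (a proper local homeomorphism onto the simply connected $\mathbb{H}$ is a homeomorphism) that $F$ is a self-homeomorphism of $\mathbb{H}$ restricting to $\xi$ on the boundary.

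It remains to estimate the maximal dilatation, which is the crux. Writing $F_{z},F_{\overline{z}}$ for the Wirtinger derivatives, one has the identities $|F_{z}|^{2}-|F_{\overline{z}}|^{2}=J$ and $2(|F_{z}|^{2}+|F_{\overline{z}}|^{2})=u_{x}^{2}+u_{y}^{2}+v_{x}^{2}+v_{y}^{2}$, so the pointwise dilatation $K=K(F)(z)$ satisfies
\[
K+\frac{1}{K}=\frac{u_{x}^{2}+u_{y}^{2}+v_{x}^{2}+v_{y}^{2}}{u_{x}v_{y}-u_{y}v_{x}}.
\]
Since $t\mapsto t+t^{-1}$ is increasing on $[1,\infty)$ and $K\ge 1$, it is enough to bound the right-hand side by $\rho^{2}+\rho^{-2}$ at every $z\in\mathbb{H}$. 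Substituting the formulas for the partials, and using that the right-hand side is homogeneous of degree $0$ in $(p,q,P/y,Q/y)$, one may normalise $q=1$ and obtain a rational function of the single ratio $p/q\in[\rho^{-1},\rho]$ (this interval is precisely the $\rho$-quasisymmetry hypothesis) and of $\sigma:=P/(yq)$, $\tau:=Q/(yq)$, which by monotonicity of $\xi$ range over $0<\sigma<p/q$ and $0<\tau<1$. I would then maximise this rational function over that compact parameter region. The main obstacle is exactly this optimisation: one must show that the maximum equals $\rho^{2}+\rho^{-2}$, attained in the extremal configuration where $p/q$ is pinned to $\rho$ (or to $\rho^{-1}$) and $\sigma,\tau$ degenerate to the relevant endpoints; this finite case analysis is the computational heart of Beurling and Ahlfors' argument, and should the symmetric averages above not already give the sharp constant one enlarges the construction to the one-parameter family of averages over $[x-ry,x+ry]$ and optimises over $r>0$ as well. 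Once $K(F)(z)+K(F)(z)^{-1}\le\rho^{2}+\rho^{-2}$ is established for all $z\in\mathbb{H}$, we obtain $K(F)(z)\le\rho^{2}$ a.e.; as $F$ is a $C^{1}$ homeomorphism with $J>0$, this means $F$ is $\rho^{2}$-quasiconformal, which is the assertion.
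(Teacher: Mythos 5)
The paper does not prove this statement; it is quoted directly from Beurling--Ahlfors \cite{BA}, so there is no in-paper argument to compare against. Your plan follows the classical construction, and the setup (the averaging formula, the formulas for $u_x,u_y,v_x,v_y$ in terms of $p,q,P,Q$, the positivity of $J$, and the identity $K+K^{-1}=(u_x^2+u_y^2+v_x^2+v_y^2)/J$) is correct. But the proof has a genuine gap precisely at the step you yourself flag as ``the crux,'' and the gap is not merely that the computation is omitted --- the optimisation as you have set it up cannot succeed. You propose to maximise $K+K^{-1}$ over the region determined by $p/q\in[\rho^{-1},\rho]$, $0<\sigma<p/q$, $0<\tau<1$, where $\sigma=P/(yq)$, $\tau=Q/(yq)$, these last two ranges coming only from monotonicity of $\xi$. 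Over that region the functional is \emph{unbounded}: normalising $q=1$, the denominator $2y^2 J=p(1-\tau)+(p-\sigma)$ tends to $0$ as $\tau\to 1$ and $\sigma\to p$, while the numerator stays bounded away from $0$, so $K\to\infty$. Hence no finite bound, let alone $\rho^2$, can come out of maximising over the region you describe.

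What is missing is the essential use of the quasisymmetry hypothesis at \emph{interior} scales $t<y$ (and at shifted base points), which is exactly how Beurling and Ahlfors proceed: by applying the $\rho$-quasisymmetry inequality along dyadic subdivisions of $[x-y,x+y]$ one derives lower bounds of the form $P/(yp)\geq c(\rho)>0$ and $Q/(yq)\geq c(\rho)>0$ (and matching upper bounds), which keep $\sigma$ and $\tau$ away from the degenerate endpoints and make the denominator comparable to $pq$. Only with these additional constraints does the parameter region become one on which the maximum is finite, and the subsequent case analysis (together with the optimisation over the vertical stretching parameter $r$ in $F=u+irv$, which you mention only as a fallback but which is needed for the sharp constant) yields $K\leq\rho^2$. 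As written, your argument establishes that $F$ is a $C^1$ orientation-preserving self-homeomorphism of the upper half-plane extending $\xi$, but not that it is quasiconformal at all, since the pointwise dilatation has not been bounded by any quantity depending only on $\rho$.
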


\subsection{\tei\ theory and quadratic differentials}

Good references are \cite{Gardiner,HubbardBook1}. To avoid unnecessary technicality we give \emph{ad hoc} definitions that work specifically in our context. 

\begin{defn}[\tei\ space of $\mathbb{C}\setminus V$]
	\label{defn:teich_space_main}
	Let $V$ be a discreet subset of $\mathbb{C}$ without accumulation points in $\mathbb{C}$. The \emph{\tei\ space} of the Riemann surface $\mathbb{C}\setminus V$ is the set of quasiconformal homeomorphisms of $\mathbb{C}\setminus V$ modulo post-composition with an affine map and isotopy relative $V$.
\end{defn}

The points in the \tei\ space of $\mathbb{C}\setminus V$ are equivalence classes $[\varphi]$ of \qc\ homeomorphisms $\varphi$ of $\mathbb{C}\setminus V$. 

\begin{remark}
	A more standard definition of the \tei\ space on a Riemann surface involves isotopy relative the \emph{ideal boundary} rather than the topological boundary. For planar domains the two definitions are equivalent \cite{Gardiner}.
\end{remark}

Every \tei\ space can be equipped with the special metric.

\begin{defn}[\tei\ distance]
	Let $[\varphi_0],[\varphi_1]$ be two points in the \tei\ space of $\mathbb{C}\setminus V$. The \tei\ distance $d_T([\varphi_0],[\varphi_1])$ is defined as	
	$$\inf\limits_{\psi\in [\varphi_1\circ (\varphi_0)^{-1}]} \log K(\psi),$$
	that is, a lower bound of the logarithm of maximal dilatations of \qc\ maps that belong to $[\varphi_1\circ (\varphi_0)^{-1}]$.
\end{defn}

The \tei\ space of $\mathbb{C}\setminus V$ equipped with the \tei\ distance is complete metric space.

We also need the notion of holomorphic quadratic differential.

\begin{defn}[Holomorphic \qd]
	A \emph{holomorphic quadratic differential} on $\mathbb{C}\setminus V$ is a meromorphic function on $\mathbb{C}\setminus V$.
\end{defn}

We say that a holomorphic quadratic differential $q$ is \emph{integrable} if $q\in L^1(\mathbb{C})$ (or equivalently $q\in L^1(\mathbb{C}\setminus V)$). An integrable quadratic differential has either a simple pole, or a removable singularity at punctures (i.e.\ at points of $V$). Further, it has a pole of order 4 at infinity and at least two finite simple poles.

The next theorem is an important result of \tei\ theory.

\begin{thm}[\tei\ Uniqueness Theorem \cite{Gardiner}]
	\label{thm:teich_uniqueness}
	Suppose $\varphi_0$ is a \qc\ map of $\mathbb{C}\setminus V$ having Beltrami coefficient $k_0 \frac{\abs{q_0}}{q_0}$ with an integrable \qd\ $q_0$ and $0\leq k_0<1$. Let $\varphi$ be any other \qc\ map of $\mathbb{C}\setminus V$ such that $\varphi^{-1}\circ \varphi_0\in[\id]$ in the \tei\ space of $\mathbb{C}\setminus \varphi_0(V)$, and let $\mu_\varphi$ be the Beltrami coefficient of $\varphi$. Then either there exists a set of positive measure in $\mathbb{C}\setminus V$ on which $\abs{\mu_\varphi(z)}>k_0$, or $\mu_\varphi(z)=k_0 \frac{\abs{q_0}}{q_0}$ almost everywhere.
\end{thm}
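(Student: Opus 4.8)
The plan is to reduce the statement to the classical Teichmüller Uniqueness Theorem for a compact surface — or rather, to invoke the standard argument built on the length-area (Gr\"otzsch) inequality, which applies verbatim once we interpret $\mathbb{C}\setminus V$ correctly. Since $V$ is discrete without accumulation points in $\mathbb{C}$, the surface $\mathbb{C}\setminus V$ is obtained from the sphere by puncturing at the points of $V$ and at $\infty$; the relevant integrable quadratic differential $q_0$ then has at worst simple poles at the punctures and the prescribed pole of order $4$ at infinity, as already recorded in the excerpt. The key point is that $q_0$ being integrable means the $q_0$-metric $|q_0|\,|dz|^2$ has finite total area, so the measured foliation machinery and the length-area method go through without modification.

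First I would set up the comparison of $q_0$-lengths. Let $\varphi_0$ have Beltrami coefficient $k_0\,\overline{q_0}/|q_0|$ (the standard Teichm\"uller form — note that $k_0|q_0|/q_0$ and $k_0\overline{q_0}/|q_0|$ differ only by notational convention and have the same modulus $k_0$). In the natural parameter $w=\xi(z)=\int\sqrt{q_0}\,dz$ (defined locally away from zeros and poles of $q_0$), $\varphi_0$ becomes an $\mathbb{R}$-linear stretch $w\mapsto \mathrm{Re}\,w + \tfrac{1}{K_0}\,i\,\mathrm{Im}\,w$ up to scale, where $K_0=(1+k_0)/(1-k_0)$. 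One then considers a generic horizontal trajectory of $q_0$ (a leaf of the horizontal foliation); the assumption that $\varphi^{-1}\circ\varphi_0$ is isotopic to the identity rel $V$ forces $\varphi$ and $\varphi_0$ to send this leaf to homotopic arcs with the same endpoint behavior at the punctures. Integrating the length-area inequality
\[
\ell_{q_0}(\text{leaf})\;\le\;\int (\text{stretch of }\varphi\text{ along the leaf})
\]
over the transverse measure, applying Cauchy--Schwarz, and using $\iint |q_0| < \infty$ (integrability) gives the inequality $\iint |\mu_\varphi| \ge$ something forcing $\|\mu_\varphi\|_\infty \ge k_0$ unless $\varphi$ is itself the Teichm\"uller stretch; equality in Cauchy--Schwarz then pins down $\mu_\varphi = k_0 |q_0|/q_0$ a.e. This is exactly the dichotomy in the statement.

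The step I expect to be the genuine obstacle is \textbf{the homotopy/exhaustion argument at the punctures and at $\infty$}, i.e.\ justifying that the isotopy rel $V$ really does let us compare $q_0$-lengths leaf-by-leaf on a non-compact, infinitely-punctured surface. Concretely: the horizontal foliation of $q_0$ near a simple pole (a puncture in $V$) has a standard local model, and near the order-$4$ pole at $\infty$ it looks like the foliation of a half-plane or strip; one must check that almost every leaf is a closed arc or a bi-infinite arc with controlled ends, that the set of leaves through zeros/poles has measure zero, and that the isotopy class rel $V$ is detected by the homotopy classes of these leaves. For a surface of \emph{infinite} analytic type this is the delicate part, but the saving grace — and presumably the reason the authors can cite \cite{Gardiner} — is that \emph{integrability} of $q_0$ is exactly the hypothesis that tames the ends: finite area rules out pathological accumulation of foliation structure, so a direct exhaustion of $\mathbb{C}\setminus V$ by compact subsurfaces, applying the classical inequality on each and passing to the limit by monotone convergence, closes the argument. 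I would therefore organize the write-up as: (i) local normal forms of $q_0$ at punctures and $\infty$; (ii) the length-area inequality on a compact exhaustion; (iii) passage to the limit using $\iint|q_0|<\infty$; (iv) extraction of the equality case.
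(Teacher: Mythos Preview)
The paper does not prove this theorem at all: it is stated in the Prerequisites section with a citation to \cite{Gardiner} (Gardiner--Lakic, \emph{Quasiconformal Teichm\"uller Theory}) and no proof is given. It is then used as a black box in the proof of Lemma~\ref{lmm:strict_contraction}. So there is no ``paper's own proof'' to compare your proposal against.

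That said, your sketch is a reasonable outline of the standard argument one finds in Gardiner--Lakic: the Main Inequality (length--area/Gr\"otzsch) applied to the horizontal foliation of $q_0$, with integrability of $q_0$ doing the work of controlling the ends. One caution: the actual proof in \cite{Gardiner} is organized somewhat differently from what you describe --- it goes through the Reich--Strebel Main Inequality, which compares $\iint |q_0|$ with an integral involving the Beltrami coefficient of the competing map, rather than a direct leaf-by-leaf comparison followed by exhaustion. The homotopy input enters via the fact that the boundary values (in the sense of ideal boundary, which for planar domains coincides with the topological boundary as the paper remarks) agree, and this is what drives the inequality. Your exhaustion-by-compact-subsurfaces plan would work but is more laborious than necessary; the integrability hypothesis lets one run the Main Inequality globally in one shot. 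If you intend to write this up, I would recommend following the Reich--Strebel formulation rather than the trajectory-by-trajectory version you outline.
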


In this article we need a special \emph{asymptotically conformal} subset of the \tei\ space.

\begin{defn}[Asymptotically conformal points \cite{Gardiner}]
	\label{defn:as_conformal}
	A point $[\varphi]$ in the \tei\ space of $\mathbb{C}\setminus V$ is called \emph{asymptotically conformal} if for every $\epsilon>0$ there is a compact set $C\subset\mathbb{C}\setminus V$ and a representative $\psi\in[\varphi]$ such that $\abs{\mu_\psi}<\epsilon$ a.e.\ on $(\mathbb{C}\setminus V)\setminus C$.
\end{defn}

Every asymptotically conformal point evidently contains a \emph{frame mapping}.

\begin{defn}[Frame mapping \cite{Gardiner}]
	\label{defn:frame_mapping}
	Let $\varphi_0$ an extremal representative of $[\varphi_0]$ (i.e.\ minimizing the \tei\ distance in $[\varphi_0]$) in the \tei\ space of $\mathbb{C}\setminus V$, and $K_0$ be its maximal dilatation. If there exists a quasiconformal map $\varphi\in[\varphi_0]$ such that $\esssup_{(\mathbb{C}\setminus V) \setminus C}|K_\varphi| <K_0$ where $C$ is a compact subset of $\mathbb{C}\setminus V$, we say that $ \varphi $ is a frame mapping of the equivalence class $[\varphi_0]$.
\end{defn}

Since every asymptotically conformal point contains a frame map, next theorem provides a presentation of the extremal representatives of the asymptotically conformal points.

\begin{thm}[The Frame Mapping Theorem \cite{Gardiner}]
	\label{thm:frame_mapping_theorem}
	If the point $[\varphi]$ in the \tei\ space of $\mathbb{C}\setminus V$ has a frame mapping, then it has an extremal representative $\varphi_0$ which has Beltrami coefficient $\mu_{\varphi_0}=k_0 \dfrac{|q_0|}{q_0}$ where $q_0$ is an integrable holomorphic quadratic differential with $\|q_0\|_{L^1}=1$ and $0\leq k_0<1$. Further, if $ 0< k_0<1 $, then $ q_0 $ is uniquely determined.
\end{thm}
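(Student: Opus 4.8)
The plan is to reproduce Strebel's original argument, so I will use the classical toolbox of \tei\ theory of planar domains collected in \cite{Gardiner}: existence of an extremal representative in each class, Hamilton's characterization of extremality, and the Reich--Strebel main inequality. If the extremal dilatation $K_0$ of $[\varphi]$ equals $1$ the class contains an affine map and there is nothing to prove, so assume $K_0>1$ and set $k_0=(K_0-1)/(K_0+1)\in(0,1)$. First I would produce the extremal map: a minimizing sequence $\varphi_n\in[\varphi]$ with $K(\varphi_n)\to K_0$, normalized by post-composition with affine maps so that the images of two prescribed points of $V$ are the same for all $n$, is a normal family of uniformly \qc\ homeomorphisms, and a locally uniform limit $\varphi_0$ is $K_0$-\qc\ and — by the standard argument that the isotopy class is preserved under such limits — still represents $[\varphi]$. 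Write $\mu_0=\mu_{\varphi_0}$, so $\abs{\mu_0}\le k_0$ a.e.\ and in fact $\|\mu_0\|_\infty=k_0$ by extremality. By Hamilton's condition there is then a \emph{Hamilton sequence}: integrable holomorphic \qd s $q_n$ on $\mathbb{C}\setminus V$ with $\|q_n\|_{L^1}=1$ and $\Re\iint_{\mathbb{C}\setminus V}\mu_0\,q_n\,dx\,dy\to k_0$.

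The crucial step is to exploit the frame mapping to prevent the Hamilton sequence from degenerating near the ideal boundary. Let $\psi\in[\varphi]$ be a frame mapping, with compact $C\subset\mathbb{C}\setminus V$ and $k_1:=\esssup_{(\mathbb{C}\setminus V)\setminus C}\abs{\mu_\psi}<k_0$. Using the Reich--Strebel main inequality — comparing the extremal $\varphi_0$ with $\psi$ and testing against the $q_n$ — I would show that if the $L^1$-mass of $q_n$ escaped every compact subset of $\mathbb{C}\setminus V$ (i.e.\ drifted off to $\infty$ or concentrated at the punctures), then the part of the mass lying outside $C$, where $\psi$ is almost conformal, would effectively force the extremal dilatation down to a value governed by $k_1<k_0$, contradicting extremality. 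Hence there are a compact $C'\supset C$ and $\delta>0$ with $\iint_{C'}\abs{q_n}\ge\delta$ for all large $n$; localizing this estimate near $\infty$ and near each point of $V$ shows in addition that $\{q_n\}$ is uniformly integrable, so that no $L^1$-mass is lost in the limit.

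Now I would extract the \tei\ differential. Being holomorphic on $\mathbb{C}\setminus V$ and bounded in $L^1$, the $q_n$ form a normal family by the Cauchy estimates; pass to a subsequence converging locally uniformly to a holomorphic \qd\ $q_0$ on $\mathbb{C}\setminus V$. By the previous step $q_0\not\equiv 0$, and uniform integrability together with a.e.\ convergence give $q_n\to q_0$ in $L^1$, so $\|q_0\|_{L^1}=1$; being integrable, $q_0$ has at worst simple poles at the punctures and a pole of order $4$ at $\infty$, as recorded above. Passing to the limit in $\Re\iint\mu_0\,q_n\to k_0$ yields $\Re\iint\mu_0\,q_0=k_0=k_0\|q_0\|_{L^1}$; since $\Re(\mu_0 q_0)\le\abs{\mu_0}\abs{q_0}\le k_0\abs{q_0}$ pointwise, equality of the integrals forces $\mu_0 q_0=k_0\abs{q_0}$ a.e.\ on $\{q_0\ne 0\}$, and as the zeros of the nonzero holomorphic function $q_0$ are isolated this means $\mu_{\varphi_0}=k_0\,\abs{q_0}/q_0$ a.e.\ on $\mathbb{C}\setminus V$, i.e.\ $\varphi_0$ is a \tei\ map. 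For uniqueness when $0<k_0<1$, I would note that if $q_1$ is another unit-norm integrable holomorphic \qd\ whose \tei\ map of Beltrami coefficient $k_0\,\abs{q_1}/q_1$ also lies in $[\varphi]$, then the \tei\ Uniqueness Theorem~\ref{thm:teich_uniqueness} forces $k_0\,\abs{q_0}/q_0=k_0\,\abs{q_1}/q_1$ a.e., hence $q_1/q_0$ is a positive meromorphic function on $\mathbb{C}\setminus V$ and therefore a positive constant, and the normalizations give $q_0=q_1$.

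The main obstacle is the second paragraph: turning the qualitative hypothesis ``$\psi$ is almost conformal outside a compact set'' into the quantitative non-degeneracy $\iint_{C'}\abs{q_n}\ge\delta$ is exactly the technical heart of the theorem. It requires the full Reich--Strebel main inequality and a careful accounting of how the mass of the $q_n$ can escape near $\infty$ and near the (infinitely many) punctures of $V$; this is also the point where Strebel's ``chimney''-type examples show that the frame-mapping hypothesis cannot be omitted. A secondary, harmless subtlety to check is that the ``the isotopy class is preserved in the limit'' step in the first paragraph indeed goes through in the present infinite-dimensional setting, which it does because $V$ is discrete with no accumulation point in $\mathbb{C}$.
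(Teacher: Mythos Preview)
The paper does not supply its own proof of this statement: Theorem~\ref{thm:frame_mapping_theorem} is listed among the prerequisites and is simply quoted from \cite{Gardiner}, so there is no in-paper argument to compare your proposal against. Your outline is, in fact, a faithful sketch of the classical Strebel proof as presented in \cite{Gardiner}: produce an extremal representative by a compactness/normal-families argument, invoke Hamilton's condition to obtain a Hamilton sequence $q_n$, use the frame mapping together with the Reich--Strebel main inequality to prevent the $L^1$-mass of $q_n$ from escaping to the ideal boundary, extract a nonzero $L^1$-limit $q_0$, and deduce $\mu_{\varphi_0}=k_0\,|q_0|/q_0$ by the pointwise equality case; uniqueness then comes from the \tei\ Uniqueness Theorem exactly as you say. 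You have also correctly isolated the genuine technical core --- the non-degeneracy of the Hamilton sequence --- and the place where the frame hypothesis is essential. In short: your proposal is the standard argument from the cited reference, and there is nothing further in the present paper to compare it with.
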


\section{Setup of Thurston iteration}
\label{sec:iteration_setup}

\subsection{Thurston iteration}
\label{subsec:iteration_setup}
In this section we are going to introduce a procedure called Thurston iteration on a \tei\ space. It will be seen later that this procedure allows us to find entire functions with the desired combinatorics of escape of the singular values.

Let $f_0$ be a transcendental entire function of finite type, and $\xi:\mathbb{C}\to\mathbb{C}$ be a \qc\ map. Define the quasiregular function $f=\xi\circ f_0$ and assume that every singular point of $f$ (i.e.\ image of a singular point of $f_0$ under $\xi$) either escapes or is (pre-)periodic under iterations of $f$. Let $P_f$ be the union of all singular orbits of $f$ (including singular values). It is common to call $P_f$ either the \emph{post-singular} set or the set of \emph{marked points}.

As in the proof of Thurston's Topological Characterization of Rational Functions \cite{DH}, one is interested in existence of an entire map $g$ which is Thurston equivalent to $f$.

\begin{defn}[Thurston equivalence]
	We say that $f$ is \emph{Thurston equivalent} to an entire map $g$ if there exist two homeomorphisms $\varphi,\psi:\mathbb{C}\to\mathbb{C}$ such that
	\begin{enumerate}
		\item $\varphi=\psi$ on $P_f$,
		\item the following diagram commutes
		
		\begin{tikzcd}
			\mathbb{C},P_f \arrow[r, "{\psi}"] \arrow[d, "f"]	& \mathbb{C},\psi(P_f) \arrow[d, "g"] \\
			\mathbb{C},P_f \arrow[r, "{\varphi}"] & \mathbb{C},\varphi(P_f)
		\end{tikzcd}
		\item $\varphi$ is isotopic to $\psi$ relative $P_f$.
	\end{enumerate}
\end{defn}

Note that unlike in the classical theory we consider Thurston equivalence relative the \emph{infinite} set of marked points $P_f$.

\begin{defn}[$\mathcal{T}_f$]
	Denote by $\mathcal{T}_f$ the \tei\ space of $\mathbb{C}\setminus P_f$.
\end{defn}

Then the Thurston map $$\sigma:\mathcal{T}_f\to\mathcal{T}_f$$ is defined as follows. Let $[\varphi]$ be a point in $\mathcal{T}_f$  where $\varphi$ is quasiconformal. Then $\varphi\circ f$ defines a conformal structure on $\mathbb{C}$. By the Uniformization theorem there exists a map $\tilde{\varphi}$ that is a conformal homeomorphism from $\mathbb{C}$ with the new complex structure (given by $\varphi\circ f$) to the standard $\mathbb{C}$ (and it will be \qc\ with respect to the standard $\mathbb{C}$). Then by definition $\sigma[\varphi]:=[\tilde{\varphi}]$. It is easy to check that $\sigma$ is well-defined on the level of the \tei\ space. The procedure is encoded in the following diagram (called \emph{Thurston diagram}):

\begin{center}
	\begin{tikzcd}
		\mathbb{C},P_f \arrow[r, "{\tilde{\varphi}}"] \arrow[d, "f=\xi\circ f_0"]	& \mathbb{C},\tilde{\varphi}(P_f) \arrow[d, "g"] \\
		\mathbb{C},P_f \arrow[r, "{\varphi}"] & \mathbb{C},\varphi(P_f)
	\end{tikzcd}
\end{center}
The map $g=\varphi\circ f\circ\tilde{\varphi}^{-1}$ corresponding to the right vertical arrow is by construction a transcendental entire function.  As in the classical case, the $\sigma$-map is evidently continuous.

As in the classical paper [DH], we are looking for a fixed point of $\sigma$. It is easy to see that the following lemma holds.

\begin{lmm}[Thurston equivalence and fixed points of $\sigma$]
	If $[\varphi]=[\tilde{\varphi}]$, then the map $g=\varphi\circ f\circ\tilde{\varphi}^{-1}$ on the right hand side of the Thurston diagram is Thurston equivalent to $f$.
\end{lmm}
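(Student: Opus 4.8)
The plan is a direct unwinding of definitions: out of the data of the Thurston diagram together with the hypothesis $[\varphi]=[\tilde{\varphi}]$ I want to produce a pair of homeomorphisms satisfying the three conditions in the definition of Thurston equivalence between $f$ and $g$. The natural candidates are $\psi:=\tilde{\varphi}$ for the top arrow and $\varphi$ for the bottom arrow, so that condition (2) (commutativity of the square) is automatic: by construction $g=\varphi\circ f\circ\tilde{\varphi}^{-1}$, i.e.\ $g\circ\tilde{\varphi}=\varphi\circ f$, which is exactly the required relation with left arrow $f$ and right arrow $g$. What remains is conditions (1) and (3): that $\varphi$ and $\tilde{\varphi}$ agree on $P_f$ and are isotopic relative $P_f$.

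Here is where the hypothesis enters, and where the only genuine care is needed. By Definition~\ref{defn:teich_space_main}, $[\varphi]=[\tilde{\varphi}]$ in $\mathcal{T}_f$ means precisely that there is an affine map $A\colon\mathbb{C}\to\mathbb{C}$ with $A\circ\varphi$ isotopic to $\tilde{\varphi}$ relative $P_f$ (and the endpoints of an isotopy relative $P_f$ necessarily agree on $P_f$). I would absorb this affine factor using the ambiguity built into the construction of $\sigma$: the uniformizing map $\tilde{\varphi}$ furnished by the Uniformization Theorem is canonical only up to post-composition with an automorphism of $\mathbb{C}$, i.e.\ with an affine map, and replacing $\tilde{\varphi}$ by $A^{-1}\circ\tilde{\varphi}$ merely replaces $g$ by the map $g\circ A$, which is again a transcendental entire function. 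Thus I may assume from the start that $\tilde{\varphi}$ has been chosen within its affine class so that $\tilde{\varphi}$ is isotopic to $\varphi$ relative $P_f$; in particular $\tilde{\varphi}=\varphi$ on $P_f$.

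With that normalization conditions (1) and (3) hold by construction, condition (2) holds as noted above, and the pair $(\varphi,\tilde{\varphi})$ witnesses that $g$ is Thurston equivalent to $f$; this completes the argument.

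There is no serious obstacle here — the content is bookkeeping. The one point worth flagging is that ``$[\varphi]=[\tilde{\varphi}]$'' is an equality of \emph{\tei} points, not of homeomorphisms, so it does not by itself give $\varphi=\tilde{\varphi}$ on $P_f$; one obtains this only after trading the affine indeterminacy of the uniformizing map (equivalently, passing from $g$ to the still-entire map $g\circ A$), which is exactly why the statement lives naturally at the level of $\sigma$ acting on $\mathcal{T}_f$ rather than at the level of individual maps.
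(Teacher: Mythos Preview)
Your proposal is correct and is exactly the natural argument; the paper itself gives no proof at all, merely stating that ``it is easy to see that the following lemma holds,'' so your write-up in fact fills in more detail than the paper provides. The one point you rightly emphasize---absorbing the affine ambiguity in the uniformizing map so that $\tilde{\varphi}$ is genuinely isotopic to $\varphi$ rel $P_f$ rather than merely \tei-equivalent---is the only thing worth saying, and you handle it correctly.
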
 

\subsection{Captured exponential function}
\label{subsec:capture}

Since our primary goal is to prove the Classification Theorem for the exponential family using Thurston iteration, we have to choose the map $f$ properly. More precisely, let $f_0\in\mathcal{N}$ be a function with the non-escaping (finite) singular value $v$. Next, let $\{a_{n}\}_{n=0}^\infty$ be an orbit of $f_0$ that escapes on rays $\mathcal{R}_{n}$, and $R_{n}$ be the part of the dynamic ray $\mathcal{R}_{n}$ from $a_{n}$ to $\infty$ (i.e.\ the ray tail with the endpoint $a_n$).

Now we describe a construction of the \emph{capture}. It is a carefully chosen \qc\ homeomorphism of $\mathbb{C}$ that is equal to identity outside of a bounded set and mapping the singular value of $f_0$ to $a_0$. 

Fix a bounded Jordan domain $U\subset\mathbb{C}\setminus\bigcup_{n>0} R_{n}$ containing both $v$ and $a_{0}$. Define an isotopy $c_u:\mathbb{C}\to\mathbb{C}, u\in[0,1]$ through \qc\ maps, such that $c_0=\id$ on $\mathbb{C}$, $c_u=\id$ on $\mathbb{C}\setminus U$ for $u\in[0,1]$, and $c_1(v)=a_{0}$. We say that the \qc\ map $c=c_1$ is a capture. 

\begin{remark}
	Note that the choice of the capture is not unique, so we select one of them.
\end{remark}

Define the map $f:=c\circ f_0$. It is a \emph{quasiregular} map whose singular orbits coincide with $\{a_{n}\}_{n=0}^\infty$, the orbit of $a_0$ under $f_0$ (also $R_{n+1}=f(R_n)$). In this article we are mainly interested in the $\sigma$-map associated to the map $f$ constructed this way.

\subsection{Motivation of the ``quasiregular setup''} 
\label{subsec:top_exp_fails}

We need to say a few words in order to explain why we do not follow the classical setup of Thurston iteration but use a \emph{quasiregular} rather than \emph{topological} branched covering $f$ as in the Thurston's Characterization Theorem. The main trouble is that for post-singularly infinite topological branched coverings $f$ the isotopy classes $[\varphi]$ and $\sigma[\varphi]$ generally belong to different \tei\ spaces (which never happens when $P_f$ is finite). As an example one could simply consider the exponential.

\begin{lmm}[Classical definition of $\sigma$ does not work]
	Let $f(z)=e^z$, and let $\{a_n\}=\{f^{\circ n}(0)\}_{n=0}^\infty$ be the orbit of its singular value $0$. Then there exist orientation-preserving (not \qc) homeomorphisms $\varphi,\tilde{\varphi}:\mathbb{C}\to\mathbb{C}$ such that $\varphi\circ f\circ\tilde{\varphi}^{-1}$ is entire transcendental function but the isotopy class $[\varphi\circ\tilde{\varphi}^{-1}]$ (relative $\tilde{\varphi}(P_f)$) does not contain any \qc\ homeomorphisms.
\end{lmm}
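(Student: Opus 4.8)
The plan is to exhibit a concrete homeomorphism $\tilde\varphi$ (a "twist" construction) such that the $\sigma$-pullback of $[\id]$ through the exponential forces the competing representative $\varphi$ to distort the post-singular points of $e^z$ in a way that cannot be absorbed by any quasiconformal map. The key geometric fact I would exploit is the asymptotic formula for dynamic rays, Theorem~\ref{thm:as_formula}: for the exponential with escaping address, the marked points $a_n=f^{\circ n}(0)$ escape essentially like $F^n(t)$ along rays $\mathcal{R}_{s_n}$, so they lie in a horizontal strip $T_{s_n}$ near $\infty$ while their real parts grow super-exponentially (cf.\ the lemma on super-exponential growth). I would first choose $\varphi=\id$ as the bottom-left homeomorphism in the Thurston diagram, and then build $\tilde\varphi$ so that $g=\varphi\circ f\circ\tilde\varphi^{-1}=f\circ\tilde\varphi^{-1}$ is again entire transcendental — this only requires $\tilde\varphi$ to be a homeomorphism that is, say, the identity outside a neighborhood of a path from $0$ to its first image, and to perform a full Dehn twist around an annulus separating some $a_k$ from the rest. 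The point $[\tilde\varphi]$ then sits in $\mathcal{T}_f$, but the issue is what happens to $[\varphi\circ\tilde\varphi^{-1}]=[\tilde\varphi^{-1}]$ relative $\tilde\varphi(P_f)$.

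Next I would make the twist "infinite": the crucial construction is a homeomorphism $\tilde\varphi$ that commutes (up to isotopy) with $f$ in a way that accumulates twisting at $\infty$. Concretely, I would twist around a nested sequence of annuli $A_n$, where $A_n$ separates $\{a_0,\dots,a_n\}$ from $\{a_{n+1},a_{n+2},\dots\}$, performing (roughly) one Dehn twist in $A_n$; because $f=e^z$ expands enormously, the geometric moduli $\mod A_n$ can be taken to stay bounded below while the number of marked points inside grows, and the pulled-back map $\tilde\varphi$ inherits a twisting profile whose "rate" near $\infty$ does not decay. The obstruction to quasiconformality then comes from the standard Grötzsch-type inequality: a $K$-quasiconformal map can distort the modulus of an annulus by at most a factor $K$, so a map isotopic (rel the marked points) to one that twists $m_n\to\infty$ times around annuli of bounded modulus cannot be quasiconformal — the required dilatation would have to be infinite. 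I would phrase this via the modulus of the quadrilaterals cut out by the ray segments $R_{s_n}$ and the twisted arcs: each twist forces a definite amount of modulus distortion, and infinitely many of them at bounded cost accumulate to an unbounded total.

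The main obstacle, and where I would spend the real work, is making precise that the isotopy class $[\tilde\varphi^{-1}]$ relative $\tilde\varphi(P_f)$ genuinely contains \emph{no} quasiconformal representative — i.e.\ ruling out that the accumulated twisting could be "unwound" by an isotopy exploiting the non-compactness (points escaping to $\infty$). Here I would argue by contradiction: if $\psi\in[\tilde\varphi^{-1}]$ were $K$-quasiconformal, restrict attention to a large round annulus $\mathcal{A}_N=\{r_N<|z|<R_N\}$ chosen (using the asymptotic formula and super-exponential growth) to contain exactly the marked points $a_{j(N)},\dots,a_{k(N)}$ with $k(N)-j(N)\to\infty$, and with both boundary circles crossing the rays transversally in a controlled way. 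The isotopy class of $\psi$ restricted to $\mathcal{A}_N$ minus these points is determined by a braid/twist that, by construction of $\tilde\varphi$, has word length $\gtrsim c\cdot(k(N)-j(N))$ in the relevant mapping-class group, and this gives a lower bound on $\mod\psi(\text{suitable quadrilateral})$ that grows with $N$ while $\mod$ of the source quadrilateral stays bounded — contradicting $K(\psi)<\infty$. The technical heart is the lower bound relating twist count to modulus distortion (an extremal-length / Dehn-twist-modulus estimate), plus verifying via Theorem~\ref{thm:as_formula} that the geometry of the rays really does let one trap arbitrarily many marked points in annuli of bounded shape; the rest is bookkeeping with the Thurston diagram to confirm $g$ is entire and that $\tilde\varphi(P_f)$ is again a discrete set without finite accumulation, so that the target \tei\ space is well-defined.
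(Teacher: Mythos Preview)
Your proposal has a genuine gap at the very first step. You set $\varphi=\id$ and then want to ``build $\tilde\varphi$'' as a Dehn twist so that $g=f\circ\tilde\varphi^{-1}$ is entire. But $f=e^z$ is already holomorphic and has no critical points, so $f\circ\tilde\varphi^{-1}$ is entire if and only if $\tilde\varphi^{-1}$ is holomorphic, hence affine. With $\varphi=\id$ the map $\varphi\circ\tilde\varphi^{-1}$ is therefore affine and certainly quasiconformal; the construction produces nothing. In the Thurston diagram $\tilde\varphi$ is not a free choice once $\varphi$ is fixed: it is obtained by integrating the Beltrami coefficient of $\varphi\circ f$, and when $\varphi=\id$ that coefficient vanishes. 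All the subsequent machinery you outline (nested annuli, accumulated Dehn twists, braid word length, extremal-length lower bounds for twists) never gets off the ground because no non-trivial $\tilde\varphi$ is available.

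The paper's argument avoids this entirely and is much simpler than your plan. It takes $\varphi$ to be a real-symmetric homeomorphism with $\varphi(a_n)=a_{2n}$; uniformization then forces $\tilde\varphi(a_n)=a_{2n+1}$ (and $g=e^z$ again). The obstruction to quasiconformality of any $\psi\in[\varphi\circ\tilde\varphi^{-1}]$ is detected by \emph{round} annuli: $\psi$ must send the annulus $A_{2n+1}=\{a_{2n+1}<|z|<a_{2n+3}\}$ to an annulus separating $\{0,a_{2n}\}$ from $\{a_{2n+2},\infty\}$, whose modulus is at most a constant times $\mod A_{2n}$. Since $a_{k+1}=e^{a_k}$, the ratio $\mod A_{2n}/\mod A_{2n+1}\to 0$, so the modulus distortion is unbounded. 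No twisting, no mapping-class computations --- the super-exponential spacing of the orbit does all the work directly.
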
 
\begin{proof}
	Pick an orientation-preserving homeomorphism $\varphi$ such that $\varphi(\overline{z})=\overline{\varphi}(z)$, and $\varphi(a_n)=a_{2n}$. In particular, it preserves the real line.
	
	Let $\tilde{\varphi}$ be a homeomorphism such that $g=\varphi\circ f\circ\tilde{\varphi}^{-1}$ is entire --- its existence can be proven as earlier by Uniformization theorem --- and normalized so that $g(z)=e^z+\kappa$. Since $\varphi(0)=\varphi(a_0)=a_0=0$, we have $\kappa=0$. Further, we may assume that $\tilde{\varphi}$ also preserves the real line. Hence $\tilde{\varphi}(a_n)=a_{2n+1}$.
	
	Now, consider the orientation-preserving homomorphism $\varphi\circ\tilde{\varphi}^{-1}$, which preserves the real line. We want to prove that the isotopy class $[\varphi\circ\tilde{\varphi}^{-1}]$ does not contain any \qc\ maps.
	
	Let $\psi\in[\varphi\circ\tilde{\varphi}^{-1}]$ and consider the annuli
	$$A_k:=\{z\in\mathbb{C}: a_{k}<\abs{z}<a_{k+2} \}.$$
	Then the annulus $B_n=\psi(A_{2n+1})$ separates the points $0,a_{2n}$ from the points $a_{2n+2},\infty$, and this holds for all $n>0$.
	
	From the general theory of \qc\ maps (see e.g.\ \cite{LehtoVirtanen}) we know that there is a real constant $C>0$ so that
	$$\mod(B_n)\leq C\mod(A_{2n}).$$
	Hence $$\frac{\mod(\psi(A_{2n+1}))}{\mod(A_{2n+1})}=\frac{\mod(B_n)}{\mod(A_{2n+1})}\leq\frac{C \mod(A_{2n})}{\mod(A_{2n+1})}\to 0$$
	as $n\to\infty$ and $\psi$ cannot be \qc.
\end{proof}

The solution to this issue of ``jumping'' between different \tei\ spaces is to take a quasiregular function $f$ which appears with its own complex plane, i.e.\ exactly what we did in the setup of the Thurston iteration.

\section{Strict contraction of $\sigma$}
\label{sec:strict_contraction}

In this section we prove that the $\sigma$-map associated to the captured exponential function (defined as in Subsection~\ref{subsec:capture}) is strictly contracting in the \tei\ metric on the subset of asymptotically conformal points in the \tei\ space (Definition~\ref{defn:as_conformal}). Instead of showing it directly, we prove a row of more general lemmas which cover the case of the composition of a polynomial with the exponential: it will not require any essential additional efforts but we will be able to use this theorem in the subsequent articles which handle the infinite-dimensional Thurston theory in this case.

We work in the more general setting of Subsection~\ref{subsec:iteration_setup} (not with the captured exponential function). More precisely, let $f_0$ be a transcendental entire function of finite type, and $\xi:\mathbb{C}\to\mathbb{C}$ be a \qc\ map. Consider the quasiregular function $f=\xi\circ f_0$ and assume that every singular point of $f$ either escapes or is (pre-)periodic. In this case we have the map $\sigma:\mathcal{T}_{f}\to\mathcal{T}_{f}$ associated to $f$. We want to apply it to the asymptotically conformal points in $\mathcal{T}_f$.

\begin{lmm}[Strict contraction of $\sigma$]
	\label{lmm:strict_contraction}
	If points $[\varphi],[\psi]\in\mathcal{T}_{f}$ and their $\sigma$-images $\sigma[\varphi],\sigma[\psi]\in\mathcal{T}_{f}$ are asymptotically conformal, and $[\varphi]\neq[\psi]$, then
	$$d_T(\sigma[\varphi],\sigma[\psi])<d_T([\varphi],[\psi]).$$
\end{lmm}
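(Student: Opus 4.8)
The plan is to follow the structure of the classical Thurston argument (as in \cite{DH,HubbardBook2}) but to use the Frame Mapping Theorem (Theorem~\ref{thm:frame_mapping_theorem}) in place of the Teichmüller Uniqueness Theorem for surfaces of finite type, since our surface $\mathbb{C}\setminus P_f$ has infinite analytic type. First I would reduce to a relative statement: by the definition of $d_T$ it suffices to understand the map $\sigma$ in terms of a single marked point. More precisely, set $[\chi]:=[\psi\circ\varphi^{-1}]$ as a point in the Teichmüller space of $\mathbb{C}\setminus\varphi(P_f)$, and let $\tilde{[\chi]}:=[\tilde{\psi}\circ\tilde{\varphi}^{-1}]$ be the corresponding point in the Teichmüller space of $\mathbb{C}\setminus\tilde{\varphi}(P_f)$. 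The commuting Thurston diagrams for $\varphi$ and $\psi$ show that the entire maps $g_\varphi=\varphi\circ f\circ\tilde{\varphi}^{-1}$ and $g_\psi=\psi\circ f\circ\tilde{\psi}^{-1}$ satisfy $g_\psi\circ(\tilde{\psi}\circ\tilde{\varphi}^{-1})=(\psi\circ\varphi^{-1})\circ g_\varphi$ up to the appropriate identifications, so $\tilde{\psi}\circ\tilde{\varphi}^{-1}$ is obtained from $\psi\circ\varphi^{-1}$ by lifting through the (unbranched away from $P_f$) holomorphic map $g_\varphi$. Hence $d_T(\sigma[\varphi],\sigma[\psi])=d_T(\tilde{[\chi]},[\id])$ while $d_T([\varphi],[\psi])=d_T([\chi],[\id])$, and everything is reduced to comparing the extremal dilatation of a relative class with that of its holomorphic lift. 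Since lifting a Beltrami coefficient through a holomorphic map does not increase its $L^\infty$-norm pointwise, we immediately get $d_T(\sigma[\varphi],\sigma[\psi])\le d_T([\varphi],[\psi])$; the whole point is to upgrade $\le$ to $<$.

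The key step is the strict inequality, and here is where asymptotic conformality enters. Since $\sigma[\varphi]=[\tilde\varphi]$ and $\sigma[\psi]=[\tilde\psi]$ are asymptotically conformal, the relative class $\tilde{[\chi]}$ has a frame mapping (Definition~\ref{defn:frame_mapping}), so by Theorem~\ref{thm:frame_mapping_theorem} it has an extremal representative $\chi_0$ with Beltrami coefficient $k_0\,\abs{q_0}/q_0$ for an integrable holomorphic quadratic differential $q_0$ with $\|q_0\|_{L^1}=1$, and $d_T(\tilde{[\chi]},[\id])=\log\frac{1+k_0}{1-k_0}$. If $k_0=0$ then $\tilde{[\chi]}=[\id]$, which forces $[\varphi]=[\psi]$ (the lifting map $\tilde\psi\circ\tilde\varphi^{-1}$ being isotopic to the identity pushes down, through the diagrams, to $\psi\circ\varphi^{-1}$ isotopic to the identity), contradicting our hypothesis; so $k_0>0$ and $q_0$ is moreover \emph{unique}. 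Now pull $q_0$ back by the holomorphic map $g_\varphi$: the differential $q_1:=g_\varphi^*q_0$ is a holomorphic quadratic differential on $\mathbb{C}\setminus P_f$, and because $g_\varphi$ has infinite degree $q_1$ is \emph{not} integrable — its $L^1$-mass is infinite (each fundamental domain contributes a full copy of $\|q_0\|_{L^1}$). This is the crucial discrepancy with the classical finite-degree case and is exactly what will give strict contraction: the extremal map in the class $[\chi]$ downstairs is forced to be ``balanced'' against the integrable differential $q_0$ pulled back, but since $q_1$ fails to be integrable, the Teichmüller Uniqueness Theorem (Theorem~\ref{thm:teich_uniqueness}) applied downstairs cannot certify that the lift of $\chi_0$ — which has Beltrami coefficient $k_0\,\abs{q_1}/q_1$ with the same constant $k_0$ — is extremal in $[\chi]$. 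Concretely: the lift $\hat\chi_0$ of $\chi_0$ lies in $[\chi]$ and has constant dilatation $K_0=\frac{1+k_0}{1-k_0}$ everywhere, so $d_T([\chi],[\id])\le\log K_0=d_T(\tilde{[\chi]},[\id])$; to get strictness I claim $\hat\chi_0$ is \emph{not} extremal in $[\chi]$. Indeed, if it were, Theorem~\ref{thm:teich_uniqueness} (with $\varphi_0=\hat\chi_0$, whose Beltrami form is $k_0\abs{q_1}/q_1$) would apply \emph{provided $q_1$ were integrable}; the failure of integrability means $[\chi]$ is an asymptotically conformal-type class only if it admits a genuinely integrable extremal differential, and one shows that such a differential must have strictly smaller constant, hence $d_T([\chi],[\id])<\log K_0$.

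I expect the main obstacle to be exactly the last sentence: turning ``$q_1$ is not integrable'' into a quantitative decrease of the extremal dilatation. The clean way is to exploit asymptotic conformality of $[\varphi],[\psi]$ as well (which is in the hypotheses and which I have not yet used): their difference class $[\chi]$ is then itself asymptotically conformal, so it has a frame mapping and an extremal representative $\chi_0'$ with Beltrami coefficient $k_0'\abs{q_0'}/q_0'$, $q_0'$ integrable, $\|q_0'\|_{L^1}=1$, and $d_T([\chi],[\id])=\log\frac{1+k_0'}{1-k_0'}$. Now I would run the standard inner-product / Reich--Strebel inequality argument: pairing the Beltrami coefficient $k_0\abs{q_1}/q_1$ of $\hat\chi_0$ against the integrable differential $q_0'$ and using that $q_1/|q_1|$ and $q_0'/|q_0'|$ do not agree almost everywhere — they cannot, because $q_1$ is a $g_\varphi$-pullback and hence periodic along the tracts while $q_0'$ is integrable, so $q_0'/q_1$ cannot be a.e.\ a positive function — yields $k_0'<k_0$ by the strict case of the length–area (Cauchy–Schwarz) estimate. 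This gives $d_T(\sigma[\varphi],\sigma[\psi])=\log\frac{1+k_0}{1-k_0}>\log\frac{1+k_0'}{1-k_0'}=d_T([\varphi],[\psi])$, wait — I have the direction backwards, so let me instead phrase it as: $d_T([\varphi],[\psi])=\log\frac{1+k_0'}{1-k_0'}$ and the lift construction shows $k_0'\ge$ (the lifted constant) while the non-integrability of $q_1$ together with the Reich–Strebel strictness shows the inequality is strict, so $d_T(\sigma[\varphi],\sigma[\psi])\le\log\frac{1+k_0}{1-k_0}<\log\frac{1+k_0'}{1-k_0'}=d_T([\varphi],[\psi])$. The technical heart, and the step I would budget the most space for, is verifying that the pulled-back differential $q_1=g_\varphi^*q_0$ genuinely has infinite $L^1$-norm and that its Beltrami direction $\overline{q_1}/|q_1|$ is incompatible a.e.\ with any integrable differential — this is where the infinite degree of $f$ (equivalently, the periodicity of $g_\varphi$ over its tracts) does the real work and is the precise point at which the classical proof breaks and must be replaced.
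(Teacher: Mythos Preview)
Your overall strategy---reduce to a relative class, apply the Frame Mapping Theorem on both levels, and exploit the infinite degree of $g_\varphi$---matches the paper's. But the execution has a genuine directional confusion that makes the argument, as written, incoherent.

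You place the extremal $\chi_0$ with differential $q_0$ on the \emph{upstairs} class $[\tilde\chi]$ (the class on $\mathbb{C}\setminus\tilde\varphi(P_f)$), and then write $q_1:=g_\varphi^*q_0$. But $g_\varphi:\mathbb{C}\setminus\tilde\varphi(P_f)\to\mathbb{C}\setminus\varphi(P_f)$ has domain upstairs; $q_0$ already lives on that domain, so $g_\varphi^*q_0$ is meaningless. Likewise your ``lift $\hat\chi_0$ of $\chi_0$'' supposedly lands in the downstairs class $[\chi]$, but lifts go \emph{up}, not down; there is no pushforward of a single map through an infinite covering. This is why you catch yourself (``wait --- I have the direction backwards'') and never quite recover.

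The fix is simple and is what the paper does: take the extremal \emph{downstairs}, say $\chi'_0\in[\chi]$ with Beltrami $k_0'\,\lvert q_0'\rvert/q_0'$ for $q_0'$ integrable on $\mathbb{C}\setminus\varphi(P_f)$ (here you use asymptotic conformality of $[\varphi],[\psi]$). Its lift through $g_\varphi$ lies in $[\tilde\chi]$ and has Beltrami $k_0'\,\lvert g_\varphi^*q_0'\rvert/g_\varphi^*q_0'$. The upstairs extremal (from asymptotic conformality of $\sigma[\varphi],\sigma[\psi]$) has form $k_1\,\lvert q_1\rvert/q_1$ with $q_1$ integrable and $k_1\le k_0'$. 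If $k_1=k_0'$, Teichm\"uller Uniqueness (Theorem~\ref{thm:teich_uniqueness}) applied \emph{upstairs} forces the two Beltrami directions to agree a.e., hence $g_\varphi^*q_0'/q_1$ is a positive real holomorphic function, so a constant $c>0$. Now one finishes in one line, and your own ``infinite mass'' idea---once pointed in the right direction---works perfectly: $\lVert q_1\rVert_{L^1}=c^{-1}\lVert g_\varphi^*q_0'\rVert_{L^1}=c^{-1}\cdot\deg(g_\varphi)\cdot\lVert q_0'\rVert_{L^1}=\infty$, contradicting integrability of $q_1$. The paper instead argues via the Great Picard Theorem that $q_0'$ could have at most one pole, which is equally short; either endgame is fine. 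The Reich--Strebel detour you sketch is unnecessary.
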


\begin{proof}
	Assume that $\sigma[\varphi]\neq\sigma[\psi]$, otherwise the statement is trivial. Consider the joint Thurston diagram for maps $ \varphi $ and $ \psi $: 
	
	\begin{center}
		\begin{tikzpicture}
			\matrix (m) [matrix of math nodes,row sep=3em,column sep=4em,minimum width=2em]
			{
				\mathbb{C},\tilde{\psi}(P_f) & \mathbb{C},P_f & \mathbb{C},\tilde{\varphi}(P_f) \\
				\mathbb{C},\psi(P_f) & \mathbb{C}, P_f & \mathbb{C},\varphi(P_f)\\};
			\path[-stealth]
			(m-1-1) edge node [left] {$g$} (m-2-1)
			(m-1-3) edge node [right] {$h$} (m-2-3)
			(m-1-2) edge node [right] {$f$} (m-2-2)
			edge node [above] {$\tilde{\psi}$} (m-1-1)
			edge node [above] {$\tilde{\varphi}$} (m-1-3)
			(m-2-2) edge node [below] [above] {$\psi$} (m-2-1)
			edge node [above] {$\varphi$} (m-2-3)
			;
		\end{tikzpicture}	
		
	\end{center}
	
	Here $ g $ and $ h $ are entire maps. Since we are only interested in the Teichm\"uller distance between $ \sigma[\varphi] $ and $ \sigma[\psi] $, we can denote $ \eta:=\varphi\circ\psi^{-1} $ and consider the following wrapped diagram:
	
	\begin{center}
		
		\begin{tikzpicture}
			\matrix (m) [matrix of math nodes,row sep=3em,column sep=4em,minimum width=2em]
			{
				\mathbb{C},\tilde{\psi}(P_f) & \mathbb{C},\tilde{\varphi}(P_f) \\
				\mathbb{C},\psi(P_f) & \mathbb{C},\varphi(P_f)\\};
			\path[-stealth]
			(m-1-1) edge node [left] {$g$} (m-2-1)
			edge node [above] {$ \tilde{\eta} $} (m-1-2)
			(m-1-2) edge node [right] {$h$} (m-2-2)
			(m-2-1) edge node [below] {$ [\eta] $} (m-2-2)	
			;
		\end{tikzpicture}
	\end{center}
	
	Since $ g $ and $ h $ are holomorphic, the $L^\infty$-norm of the Beltrami differential of $ \eta $ is preserved under the pull-back through $g$. It follows immediately that
	$$d_T(\sigma[\varphi],\sigma[\psi])\leq d_T([\varphi],[\psi]).$$
	
	Next, observe that $ [\eta] $ is an asymptotically conformal point (as a composition of such classes). Then due to Frame Mapping Theorem~\ref{thm:frame_mapping_theorem} and \tei\ Uniqueness Theorem~\ref{thm:teich_uniqueness} there exists the unique map $\eta_0$ with the smallest maximal dilatation in $ [\eta] $, and its Beltrami coefficient has form $ \mu_{\eta_0}=k_0 \frac{|q_0|}{q_0} $ where $ q_0 $ is an integrable holomorphic quadratic differential on $ \mathbb{C}\setminus\psi(P_f) $ such that $\|q_0\|_{L^1}=1$. Let $ \tilde{\eta}_0 $ be a solution of the Beltrami equation with the Beltrami coefficient $\mu_{\eta\circ g}$. Then the Beltrami coefficient of $ \tilde{\eta}_0 $ will have form $ \mu_{\tilde{\eta}_0}=k_0 \frac{|q_0\circ g|}{q_0\circ g }\frac{|g'^2|}{g'^2} $.
	
	Since $\sigma[\eta]$ is asymptotically conformal (as a composition of asymptotically conformal points) in $ \sigma[\eta] $ there is a unique extremal map $ \tilde{\eta}_1 $ 
	and its Beltrami differential has the form $ \mu_{\tilde{\eta}_1}=k_1 \frac{|q_1|}{q_1}$ where $ q_1 $ is an integrable quadratic differential on $\mathbb{C}\setminus\tilde{\psi}(P_f)$ such that $\|q_1\|_{L^1}=1$, and $k_1\leq k_0$.
	
	If $ k_1=k_0 $, then due to \tei\ Uniqueness Theorem~\ref{thm:teich_uniqueness} we have $\mu_{\tilde{\eta}_1}=\mu_{\tilde{\eta}_0} $ and $ q_0\circ g \, g'^2/q_1\in\mathbb{R} $ in $\mathbb{C}\setminus \tilde{\psi}(P_f)$. This implies that $q_0\circ g\,g'^2/q_1\equiv\const$. But this means that $q_0$ cannot have more than one pole. Indeed, by Great Picard Theorem $g$ attains every value with at most one exception infinitely many times. Since $\abs{P_f}>1$ ($f_0$ either has more than one singular value, or belongs to the exponential family and its unique singular value cannot be a fixed point), $P_f$ contains at least one such value $z$. If $q_0$ has a pole at $z$, then $q_1$ has poles at infinitely many preimages of $z$ under $g$. This is impossible since at most finitely many of them belong to $P_f$. So $q_1$ can have at most one pole in $\mathbb{C}$. For an integrable holomorphic \qd\ on $\mathbb{C}\setminus\psi(P_f)$ this is possible only if $q_1=0$, and this contradicts to the choice of $q_1$ so that $\|q_1\|_{L^1}=1$. So $ k_1<k_0 $, and this means that $ \sigma $ is strictly contracting.		
\end{proof}

We want to use Lemma~\ref{lmm:strict_contraction} to show that if $f_0$ is a composition of a polynomial with the exponential and $\xi$ is asymptotically conformal as a map (i.e.\ its maximal dilatation tends to $0$ as the argument tends to $\infty$), then the $\sigma$-map is strictly contracting on the subset of asymptotically conformal points in $\mathcal{T}_f$. In order to do so, we need to show that $\sigma$ maps every asymptotically conformal point to another asymptotically conformal point. We do this in the following two lemmas.

If $\mu(z)$ is the Beltrami coefficient of a \qc\ map $\varphi$, then we denote $K_\varphi(z):=\frac{1+\mu(z)}{1-\mu(z)}$.

\begin{lmm}[Correction of the dilatation]
	\label{lmm:correction_of_dilatation}
	Let $ \varphi $ be a $ K $-quasiconformal automorphism of $ \mathbb{C} $ such that for some $R>0$ we have $\esssup_{\mathbb{C} \setminus \mathbb{D}_R(0)}|K_\varphi(z)| \leq 1+\epsilon$. Then there exists $\rho\geq R$ and a quasiconformal automorphism $ \varphi' $  such that $ \varphi\equiv\varphi' $ on $\mathbb{C}\setminus\mathbb{D}_{\rho}(0)$, $\esssup_{\mathbb{C}}|K_{\varphi'}(0)| \leq (1+2\epsilon)^3$, and $ \varphi $ and $ \varphi' $ are isotopic relative $ (\mathbb{C}\setminus\mathbb{D}_\rho(0)) \cup \{0\} $.
\end{lmm}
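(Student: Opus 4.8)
The plan is to exploit the fact that a $K$-quasiconformal map which is almost conformal outside a large disk behaves, near $\infty$, like a conformal map, and in particular its restriction to large circles is almost a Möbius-type map; then to "correct" the bad dilatation on a controlled annulus by interpolating between $\varphi$ and a conformal (or affine) model.

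First I would normalize. Since $\varphi$ is a $K$-quasiconformal automorphism of $\mathbb{C}$, it fixes $\infty$, and after post-composing with an affine map (which does not change $K_\varphi$ and does not affect the conclusion, which is stated only up to the freedom already present) we may assume $\varphi(0)=0$ and some normalization at $\infty$. Next I would pass to logarithmic-type coordinates or simply work on the annulus $A_r = \{r < |z| < r^2\}$ for $r\geq R$ large: the restriction $\varphi|_{\{|z|=r\}}$, pushed into the coordinate $\theta\mapsto \log\varphi(re^{i\theta})$, is a quasisymmetric circle homeomorphism whose quasisymmetry constant $\rho(r)$ tends to $1$ as $r\to\infty$, by the boundary-values theorem (the analogue of \cite[Theorem~6.2]{LehtoVirtanen} quoted in the excerpt) applied to the nearly-conformal map $\varphi$ on $\mathbb{C}\setminus\mathbb{D}_R(0)$, together with the hypothesis $\esssup_{\mathbb{C}\setminus\mathbb{D}_R(0)}K_\varphi\leq 1+\epsilon$. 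Concretely: choosing $\rho$ large enough, the quasisymmetry constant of $\varphi$ restricted to $\{|z|=\rho\}$ is at most $1+c\epsilon$ for a universal $c$.

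Then I would build $\varphi'$ by an explicit interpolation on the annulus $\{\rho/2 < |z| < \rho\}$ (shrink $R$ to $\rho/2$ if necessary, or just enlarge $\rho$): on $\{|z|\geq\rho\}$ set $\varphi'=\varphi$; on $\{|z|\leq\rho/2\}$ let $\varphi'$ be a map that is conformal except on a small disk around $0$ — e.g. the identity, or a Möbius-type map — matching the "average" behaviour of $\varphi$ on $\{|z|=\rho\}$; and on the annulus interpolate radially. The dilatation of such a radial interpolant between two quasisymmetric boundary parametrizations is controlled by the quasisymmetry constants of the boundary data and by the ratio of radii (this is exactly the Beurling–Ahlfors-type estimate, Theorem~\ref{thm:BA-extension}, adapted to an annulus); since both boundary circles carry data that is $(1+c\epsilon)$-quasisymmetric once $\rho$ is large, the interpolant on the annulus has dilatation at most $(1+2\epsilon)$ after absorbing constants into the choice of $\rho$. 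On the inner disk $\varphi'$ is conformal away from $0$, and the single factor of $(1+2\epsilon)$ there combines with the annular factor and with the (now conformal) outer part to give $K_{\varphi'}\leq(1+2\epsilon)^2$ globally, with one extra factor $(1+2\epsilon)$ of slack in the exponent $3$ covering the composition of the interpolation step with a final matching adjustment; in any case the bound $(1+2\epsilon)^3$ is generous. The isotopy between $\varphi$ and $\varphi'$ relative $(\mathbb{C}\setminus\mathbb{D}_\rho(0))\cup\{0\}$ is immediate: both maps agree outside $\mathbb{D}_\rho(0)$ and both fix $0$, and $\mathbb{D}_\rho(0)\setminus\{0\}$ is an annulus, so any two homeomorphisms of it agreeing on the outer boundary and fixing the puncture (with the same rotation number, which we arrange) are isotopic rel that data; one can also simply take the straight-line isotopy between the two interpolants.

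The main obstacle I anticipate is making the quasisymmetry-to-dilatation bookkeeping genuinely quantitative and uniform: one must ensure that a single $\rho$ works, that the quasisymmetry constant of $\varphi|_{\{|z|=\rho\}}$ really is squeezed to $1+O(\epsilon)$ (not merely to something finite), and that the annular interpolation does not secretly depend on $K$ rather than on $1+\epsilon$. The point is that all the "bad" part of $\varphi$ lives inside $\mathbb{D}_R(0)$, and the boundary-values theorem sees only the restriction to a neighbourhood of $\{|z|=\rho\}\subset\mathbb{C}\setminus\mathbb{D}_R(0)$ where $\varphi$ is $(1+\epsilon)$-quasiconformal — so the dependence is correctly on $\epsilon$ alone. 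Getting the constants to land inside $(1+2\epsilon)^3$ rather than, say, $(1+C\epsilon)$ is then just a matter of choosing $\rho$ large, and I would not belabour the precise arithmetic.
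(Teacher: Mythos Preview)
Your overall architecture is the same as the paper's: show that the restriction of $\varphi$ to a large circle $\{|z|=r\}$ is $(1+O(\epsilon))$-quasisymmetric, then replace $\varphi$ inside that circle by a Beurling--Ahlfors-type extension, and finally nudge to fix $0$. But the step you flag as ``the main obstacle'' really is the crux, and your dismissal of it is not right.

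You write that the boundary-values theorem ``sees only the restriction to a neighbourhood of $\{|z|=\rho\}$'' and therefore gives a quasisymmetry constant depending on $1+\epsilon$ alone. That is false as stated: the boundary-values theorem (the paper's version of \cite[Theorem~6.2]{LehtoVirtanen}) bounds the quasisymmetry constant on $\partial\mathbb{D}_r$ by $\lambda(K')$, where $K'$ is the maximal dilatation on \emph{all} of $\mathbb{D}_r$, not on a collar. Since $\mathbb{D}_R\subset\mathbb{D}_r$ and $\varphi$ is only $K$-quasiconformal there, a direct application gives a bound depending on $K$, which is useless. The related claim that the quasisymmetry constant ``tends to $1$ as $r\to\infty$'' is also wrong: it tends to (at best) $\lambda(1+\epsilon)$, not to $1$.

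What is actually needed---and what the paper supplies---is a quantitative argument that the bad set $\varphi(\mathbb{D}_R)$ contributes negligibly to the modulus distortion of quadrilaterals $Q$ with sides on $\partial\mathbb{D}_r$ once $r$ is large. The mechanism is hyperbolic: choose $r$ so large that $\varphi(\mathbb{D}_R)$ has hyperbolic diameter $\delta<1$ in $\varphi(\mathbb{D}_r)$; pass to the canonical rectangle $[0,M']\times[0,1]$ for $\varphi(Q)$, and observe that the image of the bad set sits in a vertical strip of width $\leq M'\delta$ (comparing Euclidean and hyperbolic lengths in the rectangle). Splitting the rectangle by that strip and applying the Gr\"otzsch inequality to the two good pieces yields
\[
\mod\varphi(Q)\ \leq\ \frac{1+\epsilon}{1-\delta}\,\mod Q,
\]
which can be made $\leq(1+2\epsilon)\mod Q$ by enlarging $r$. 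This is the missing idea; once you have it, the Beurling--Ahlfors extension inside $\mathbb{D}_r$ gives dilatation $\leq(1+2\epsilon)^2$, and a further small-dilatation post-composition supported on a fat annulus sends the image of $0$ back to $0$, accounting for the third factor $(1+2\epsilon)$. (Your initial affine normalisation $\varphi(0)=0$ is not available, by the way: it would alter $\varphi$ on $\mathbb{C}\setminus\mathbb{D}_\rho$, where $\varphi'$ is required to agree with $\varphi$ exactly.) The isotopy relative $(\mathbb{C}\setminus\mathbb{D}_\rho)\cup\{0\}$ then follows from the Alexander trick, as you say.
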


\begin{proof}
	Choose $ r>1 $ such that $ \varphi(\mathbb{D}_R(0)) $ is contained in a disk of diameter $ \delta<1 $ with respect to the hyperbolic metric on $ \varphi (\mathbb{D}_r(0)) $. Let $ Q $ be a quadrilateral with its sides on the boundary of $ \mathbb{D}_r(0) $, and $ M,M' $ be moduli of $ Q $ and $ \varphi(Q) $, respectively. After pre- and post-composition of $ \varphi|_{\mathbb{D}_r} $ with the canonical conformal maps of $ Q $ and $ \varphi(Q) $ onto the rectangles $ [0,M]\times[0,1] $ and $ [0,M']\times[0,1] $, respectively, we obtain a $K$-quasiconformal homeomorphism $ \chi $ of these rectangles (as quadrilaterals) whose maximal dilatation does not exceed $ 1+\epsilon $ outside of some set having image contained in a hyperbolic disk $ D $ of diameter $ \delta<1 $ centered at $ z_0 \in [0,M']\times[0,1] $. 
	
	Let $ a:=\inf_{z\in D}\Re z,\, b:=\sup_{z\in D} \Re z$, and $\gamma $ be the hyperbolic geodesic connecting the most left and the most right points of $ D $. Consider the partition of $[0,M']\times[0,1]$ into three rectangles $A,B,C$ such that $$A=[0,a]\times[0,1],$$
	$$B=[a,b]\times[0,1],$$
	$$C=[b,M']\times[0,1].$$
	Note that if $z\in[0,M']\times[0,1]$ then its distance to the boundary of $Q$ does not exceed $M'/2$. Hence using the standard estimates for the hyperbolic metric we obtain
	
	$$ 
	\mod B=b-a=M'\int_{a}^{b}\frac{1}{2\frac{M'}{2}}dx\leq M'\int_{\gamma}\lambda|dz| \leq M'\delta
	$$
	where $ \lambda $ is the hyperbolic density on $ [0,M']\times[0,1] $.
	
	Thus due to the estimates on the maximal dilatation of $ \chi $ and Gr\"otzsch inequality for rectangles we have:
	
	$$ \mod \varphi(Q)=M'=\mod A+\mod B+\mod C\leq$$
	$$(1+\epsilon)(\mod\chi^{-1}(A)+\mod\chi^{-1}(B))+M'\delta\leq$$
	$$(1+\epsilon)\mod Q+M'\delta=(1+\epsilon)M+M'\delta
	$$
	
	Hence 
	$$ \mod \varphi(Q)\leq \frac{1+\epsilon}{1-\delta}\mod Q $$
	and by making $r$ bigger (or equivalently by making $\delta$ smaller) we can choose the quasisymmetry constant (Definition~\ref{defn:quasisymmetry}) of $ \chi|_{\partial\mathbb{D}_r(0)} $ arbitrarily close to $1+\epsilon$. Choose it to be $ 1+2\epsilon $. Thus by Theorem~\ref{thm:BA-extension} the Beurling-Ahlfors extension $\mathcal{B}_\varphi$ of $\varphi|_{\partial\mathbb{D}_r(0)}$ would give us a quasiconformal homeomorphism of $\mathbb{D}_r(0)$ onto $\varphi(\mathbb{D}_r(0))$ with the maximal dilatation not exceeding $(1+2\epsilon)^2 $. Outside of $\varphi|_{\partial\mathbb{D}_r(0)}$ define $\mathcal{B}_\varphi$ to be equal to $\varphi$.
	
	If we choose a ``fat'' enough annulus $ \mathbb{D}_\rho(0)\setminus\overline{\mathbb{D}}_r(0) $ around $ \mathbb{D}_r(0) $ for some $\rho>r$, we can construct a quasiconformal map $ \theta' $ such that $ \theta'|_{\mathbb{C}\setminus\varphi(\mathbb{D}_\rho(0))} \equiv\id$, $ \theta'(\mathcal{B}_\varphi(0))=0$, and its maximal dilatation on $ \varphi(\mathbb{D}_\rho(0)) $ is not bigger than $1+2\epsilon $. Denote $ \varphi':=\theta'\circ \mathcal{B}_\varphi $. Then $ \varphi'(0)=0 $, $ \varphi'|_{\mathbb{C}\setminus\mathbb{D}_\rho}=\varphi $, its maximal dilatation does not exceed $ (1+2\epsilon)^3 $. That it is isotopic to $\varphi$ relative to the $ (\mathbb{C}\setminus\mathbb{D}_\rho(0)) \cup \{0\} $ follows from Alexander trick.
\end{proof}

Now we are ready to show that in the case of the composition of a polynomial with the exponential the asymptotically conformal subset is invariant under $\sigma$.

\begin{lmm}[Invariance of asymptotically conformal subset]
	\label{lmm:invariance_as_conf}
	Let $p$ be a non-constant polynomial, $f_0=p\circ\exp$ and $f=\xi\circ f_0$ where $\xi:\mathbb{C}\to\mathbb{C}$ is asymptotically conformal (as a map). Then the $\sigma$-image of every asymptotically conformal point is asymptotically conformal.
\end{lmm}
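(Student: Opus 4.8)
The plan is to analyze the Beltrami coefficient $\mu_{\tilde\varphi}$ of the pullback map $\tilde\varphi$ in the Thurston diagram, splitting the plane into a large disk (where we have no control but only finitely many marked points) and its complement (where we must show the dilatation becomes small near $\infty$). Start from an asymptotically conformal point $[\varphi]\in\mathcal{T}_f$: given $\epsilon>0$, choose a representative $\varphi$ and a large $R$ so that $|\mu_\varphi|<\epsilon$ a.e.\ on $\mathbb{C}\setminus\mathbb{D}_R(0)$. The Beltrami coefficient of $g=\varphi\circ f\circ\tilde\varphi^{-1}$ being $0$ (since $g$ is entire), the Beltrami coefficient of $\tilde\varphi$ is the pullback $f^*(\mu_\varphi)$, i.e.\ $\mu_{\tilde\varphi}(z)=\dfrac{\mu_\varphi(f(z))\,\overline{f'(z)}/f'(z)+\mu_{f}(z)}{1+\overline{\mu_f(z)}\,\mu_\varphi(f(z))\,\overline{f'(z)}/f'(z)}$ where $\mu_f=\mu_\xi\circ f_0\cdot(\text{unimodular factor})$ is the Beltrami coefficient of the quasiregular map $f=\xi\circ f_0$. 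The key structural point is that $f=\xi\circ(p\circ\exp)$ maps a neighborhood of $\infty$ in $\mathbb{C}$ (more precisely, the left half-plane, or the tracts over a right half-plane, which is where $P_f$ accumulates at $\infty$ along rays) into a neighborhood of $\infty$ that is eventually outside $\varphi(\mathbb{D}_R(0))$; so for $z$ large, $f(z)$ lies where $|\mu_\varphi|<\epsilon$, and since $\xi$ is asymptotically conformal we also have $|\mu_f(z)|=|\mu_\xi(f_0(z))|<\epsilon$ for $z$ large (as $f_0(z)=p(e^z)\to\infty$ whenever $\Re z\to+\infty$; for $\Re z\to-\infty$ one uses that $e^z\to 0$ so $f_0(z)\to p(0)$, a single point, and one must instead argue via the tract structure where escaping rays live). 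Composing these two small-dilatation contributions via the formula above gives $|\mu_{\tilde\varphi}(z)|<2\epsilon/(1-\epsilon^2)$ or so on the complement of some large disk $\mathbb{D}_{R'}(0)$.

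The subtlety is that this bound on $\mu_{\tilde\varphi}$ holds outside a large disk $\mathbb{D}_{R'}(0)$, but the \emph{set} $\mathbb{D}_{R'}(0)\setminus P_f$ is not compact in $\mathbb{C}\setminus P_f$: it still contains infinitely many marked points (those of $P_f$ inside $\mathbb{D}_{R'}$) unless only finitely many $a_n$ lie in $\mathbb{D}_{R'}$. Here I would invoke that the escaping orbit $\{a_n\}$ tends to $\infty$ (each $a_n$ escapes on the ray $\mathcal{R}_n$, and the asymptotic formula~\eqref{eqn:as_formula} together with $f^n\circ\mathcal{R}_{\underline s}=\mathcal{R}_{\sigma^n\underline s}\circ F^n$ forces $|a_n|\to\infty$ by super-exponential growth of $F^n$), so only finitely many marked points lie in $\mathbb{D}_{R'}(0)$, and hence $\overline{\mathbb{D}_{R'}(0)}\setminus P_f$ minus small disks around those finitely many points is compact. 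Thus $\tilde\varphi$ already witnesses asymptotic conformality of $[\tilde\varphi]=\sigma[\varphi]$ — provided $\tilde\varphi$ is genuinely quasiconformal, with finite maximal dilatation, which is where Lemma~\ref{lmm:correction_of_dilatation} enters: $\tilde\varphi$ a priori has dilatation controlled by $K(\varphi)\cdot K(\xi)$ (composition bound), which is finite, so in fact no correction is needed for finiteness — the correction lemma is the tool one keeps in reserve if one instead wants a representative whose dilatation is \emph{globally} close to optimal, but for the bare statement "asymptotically conformal" the pulled-back representative $\tilde\varphi$ suffices directly.

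I expect the main obstacle to be the bookkeeping near $\infty$ on the side where $\exp$ contracts, i.e.\ the left half-plane: there $f_0=p\circ\exp$ does \emph{not} tend to $\infty$, so the naive "$f(z)$ large $\Rightarrow\mu_\varphi(f(z))$ small" argument fails, and one cannot blindly assert $|\mu_f|$ is small there either. The resolution is that the marked points $P_f$ and the post-singular rays $R_n$ accumulate at $\infty$ only through the tracts over a right half-plane $\mathbb{H}_r$ (Subsection~\ref{subsec:esc_dynamics}), and $\mathbb{C}\setminus P_f$ near $\infty$, for the purposes of asymptotic conformality, only needs $|\mu_{\tilde\varphi}|$ small on the complement of a compact set \emph{in $\mathbb{C}\setminus P_f$} — and a suitable exhaustion by compacta can be chosen to absorb the left half-plane, where there are no marked points accumulating, into the "compact part" in a controlled way, or alternatively one works with a representative of $[\varphi]$ that is already conformal on a left half-plane. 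Pinning down this exhaustion cleanly, and checking that the unimodular derivative factors $\overline{f'}/f'$ do not spoil the estimate (they don't, being unimodular), is the technical heart; everything else is the composition formula for Beltrami coefficients plus the pointwise estimates assembled above.
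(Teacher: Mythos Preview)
Your pullback computation is correct as far as it goes, and you have correctly located the real difficulty: on the left half-plane $\{\Re z\ll 0\}$ the map $f_0=p\circ\exp$ sends $z$ near the finite point $p(0)$, so $|\mu_{\tilde\varphi}(z)|=|\mu_{\varphi\circ\xi}(f_0(z))|$ is essentially $|\mu_{\varphi\circ\xi}(p(0))|$, which has no reason to be small. Thus the naive pullback $\tilde\varphi$ has uncontrolled dilatation on an \emph{unbounded} set, and neither of your proposed fixes works. ``Absorbing the left half-plane into the compact part'' is impossible, since a compact subset of $\mathbb{C}\setminus P_f$ is bounded. ``Choosing $\varphi$ conformal on a left half-plane'' attacks the wrong region: what matters is the dilatation of $\varphi\circ\xi$ near the single finite point $p(0)$, not near $\infty$.

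This is exactly why Lemma~\ref{lmm:correction_of_dilatation} is not ``in reserve'' but is the crux of the argument. The paper factors the diagram through $\exp$, obtaining an intermediate map $\varphi$ on $\mathbb{C}$ with marked points $\exp(P_f)\cup\{0\}$; this $[\varphi]$ \emph{is} asymptotically conformal because the polynomial preimage of a neighborhood of $\infty$ is a neighborhood of $\infty$. Lemma~\ref{lmm:correction_of_dilatation} then replaces $\varphi$ by $\varphi'$ with \emph{globally} small dilatation, isotopic rel $\{0\}\cup(\mathbb{C}\setminus\mathbb{D}_\rho)$---forgetting finitely many intermediate marked points. Lifting $\varphi'$ through $\exp$ now gives a map with globally small dilatation (no left-half-plane problem, since $\varphi'$ is good everywhere), isotopic to $\tilde\psi$ rel $P_f\setminus X$ for a finite set $X$; a compactly supported correction $\theta$ restores the full isotopy class. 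Your direct approach could be rescued, but only by reproducing this correction step at the level of $\tilde\varphi$ on the left half-plane, which is precisely the content of Lemma~\ref{lmm:correction_of_dilatation}.
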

\begin{proof}
	Let $[\psi]\in\mathcal{T}_f$ be asymptotically conformal. We want to prove that $\sigma[\psi]\in\mathcal{T}_{f}$ is also asymptotically conformal. After solving Beltrami equation for $\mu_{\psi\circ\xi\circ p}$ we obtain $[\varphi]\in\mathcal{T}_{\exp(P_f)\cup\{0\}}$ (note that $p(0)\in P_f$) such that the following expanded Thurston diagram (with $g$ belonging to the exponential family and a polynomial $q$) is commutative.
	\begin{center}
		\begin{tikzpicture}
			\matrix (m) [matrix of math nodes,row sep=3em,column sep=4em,minimum width=2em]
			{
				\mathbb{C},P_f & \mathbb{C},\tilde{\psi}(P_f) \\
				\mathbb{C},\exp(P_f)\cup\{0\} & \mathbb{C},\varphi(\exp(P_f)\cup\{0\})\\
				\mathbb{C},P_f & \mathbb{C},\psi(P_f)\\};
			\path[-stealth]
			(m-1-1) edge node [left] {$\exp$} (m-2-1)
			edge node [above] {$ \tilde{\psi} $} (m-1-2)
			(m-1-2) edge node [right] {$g$} (m-2-2)
			(m-2-1) edge node [above] {$ \varphi $} (m-2-2)	
			edge node [left] {$\xi\circ p$} (m-3-1)
			(m-2-2) edge node [right] {$q$} (m-3-2)
			(m-3-1) edge node [above] {$ \psi $} (m-3-2)
			;
		\end{tikzpicture}
	\end{center}
	
	Since $[\psi]$ and $\xi$ are asymptotically conformal (the latter as a map) and the preimage of a neighborhood of $\infty$ under $p$ is a neighborhood of $\infty$, the point $[\varphi]\in\mathcal{T}_{\exp(P_f)\cup\{0\}}$ is also asymptotically conformal.
	
	If $\esssup_{\mathbb{C} \setminus \mathbb{D}_R}|K_\varphi| \leq 1+\epsilon$, then using Lemma~\ref{lmm:correction_of_dilatation} we can find a quasiconformal homeomorphism $\varphi'$ which is isotopic to $\varphi$ relative $0$ and the complement of a disk $\mathbb{D}_\rho(0)$ (basically we forget finitely many marked points except $0$ that are contained in $\mathbb{D}_\rho(0)$), and so that $\esssup_{\mathbb{C}}|K_{\varphi'}| \leq (1+2\epsilon)^3$. Consider its lift $ \tilde{\varphi}' $. It is isotopic to $\tilde{\psi}$ relative $P_f\setminus X$ where $ X $ is a finite subset of $P_f$.
	
	Next, we can always find a quasiconformal homeomorphism $ \theta$, equal to identity outside of some bounded set so that $\theta\circ\tilde{\varphi}'$ is isotopic to $\tilde{\psi}$ relative $P_f$. Further, we can also isotope $\theta\circ\tilde{\varphi}'$ in small neighborhoods of marked points in this bounded set so that the obtained map is conformal in these neighborhoods.
	
	Since $ \epsilon $ can be chosen arbitrarily small, lemma is proven.
\end{proof}

After choosing $f_0\in\mathcal{N}$ and a capture $c$ like in Subsection~\ref{subsec:capture}, from Lemmas~\ref{lmm:strict_contraction}, \ref{lmm:invariance_as_conf} immediately follows

\begin{thm}[$\sigma$ is strictly contracting on as.\ conf.\ subset]
	\label{thm:sigma_strictly_contracting}
	If $f$ is the captured exponential function (defined as in Subsection~\ref{subsec:capture}), then the $\sigma$-map is invariant and strictly contracting on the subset of asymptotically conformal points in $\mathcal{T}_f$.
\end{thm}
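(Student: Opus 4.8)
The plan is to obtain the statement as a direct corollary of Lemmas~\ref{lmm:strict_contraction} and \ref{lmm:invariance_as_conf}, so that the only thing to do is to check that the captured exponential function fits the hypotheses of both. First I would record that the captured exponential function $f = c\circ f_0$ of Subsection~\ref{subsec:capture}, with $f_0\in\mathcal{N}$, say $f_0(z)=e^z+\kappa_0$, is of the form $\xi\circ f_0$ with $f_0 = p\circ\exp$ for the non-constant (degree one) polynomial $p(w)=w+\kappa_0$ and $\xi = c$ the capture. The capture is quasiconformal and equal to the identity off a bounded set, so $\mu_c\equiv 0$ near $\infty$; in particular $c$ is asymptotically conformal as a map, in the sense required by Lemma~\ref{lmm:invariance_as_conf}. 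Hence Lemma~\ref{lmm:invariance_as_conf} applies verbatim and shows that $\sigma$ maps every asymptotically conformal point of $\mathcal{T}_f$ to an asymptotically conformal point; this is the claimed invariance.

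For strict contraction, I would take two distinct asymptotically conformal points $[\varphi]\neq[\psi]$ in $\mathcal{T}_f$. By the invariance just established, $\sigma[\varphi]$ and $\sigma[\psi]$ are again asymptotically conformal, so all four points appearing in the hypothesis of Lemma~\ref{lmm:strict_contraction} are asymptotically conformal. It then remains only to observe that the standing assumption $\abs{P_f}>1$ used inside the proof of Lemma~\ref{lmm:strict_contraction} is satisfied here: by construction $P_f=\{a_n\}_{n\geq 0}$ is a forward orbit escaping on rays, hence an infinite set. Lemma~\ref{lmm:strict_contraction} then yields $d_T(\sigma[\varphi],\sigma[\psi])<d_T([\varphi],[\psi])$, which is exactly strict contraction.

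Since both halves follow by simply quoting the two lemmas, there is no genuine obstacle at this level: the substance has already been expended, on the one hand in Lemma~\ref{lmm:invariance_as_conf} (which in turn rests on the dilatation correction of Lemma~\ref{lmm:correction_of_dilatation} and the Beurling--Ahlfors extension), and on the other hand in Lemma~\ref{lmm:strict_contraction} (which rests on the Frame Mapping Theorem, the \tei\ Uniqueness Theorem, and the Great Picard argument forcing the associated integrable quadratic differential to vanish). If anything needs care, it is purely bookkeeping: checking that the normalizations of $\sigma$ used in Subsection~\ref{subsec:capture} agree with those in Section~\ref{sec:strict_contraction}, and that the notion of an asymptotically conformal point is interpreted consistently for $\mathcal{T}_f$ and for the auxiliary space $\mathcal{T}_{\exp(P_f)\cup\{0\}}$ that appears in the proof of Lemma~\ref{lmm:invariance_as_conf}.

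Accordingly, the write-up would be short: state that $c$ is asymptotically conformal as a map and that $f_0=p\circ\exp$ with $p$ non-constant, invoke Lemma~\ref{lmm:invariance_as_conf} for invariance, and then invoke Lemma~\ref{lmm:strict_contraction} for the strict inequality, noting $\abs{P_f}=\infty$.
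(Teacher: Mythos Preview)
Your proposal is correct and matches the paper's own argument: the theorem is stated there as an immediate consequence of Lemmas~\ref{lmm:strict_contraction} and~\ref{lmm:invariance_as_conf}, after noting that $f_0=p\circ\exp$ with $p(w)=w+\kappa_0$ and that the capture $c$ is the identity outside a bounded set (hence asymptotically conformal as a map). Your additional remarks about $\abs{P_f}>1$ and the bookkeeping of normalizations are fine but not strictly needed for the write-up.
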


\section{$\Id$-type maps}

\label{sec:id_type_maps}

The purpose of this subsection is to introduce the so-called $\id$-type (or identity-type) maps which are \qc\ homeomorphisms of $\mathbb{C}$ satisfying certain conditions (see Definition~\ref{defn:id_type}). They have two important properties: they are almost equal to identity near $\infty$ on certain dynamic rays, and the set of points $[\varphi]\in\mathcal{T}_f$ that contain a map \idt\ in its equivalence class is invariant under $\sigma$. This allows to reduce Thurston iteration to the so-called Spider Algorithm \cite{MarkusThesis,Spiders}.

We work with the captured exponential function $f=c\circ f_0$ constructed in Subsection~\ref{subsec:capture} and start with a few definitions which take central position in the theory that we develop.

\begin{defn}[Standard spider]
	$S_0=\cup_{n\geq 0} R_{n}$ is called the \emph{standard spider}.
\end{defn}

\begin{defn}[$\Id$-type maps]
	\label{defn:id_type}
	A quasiconformal map $\varphi:\mathbb{C}\to \mathbb{C}$ is \idt\ if there is an isotopy $\varphi_u:\mathbb{C}\to\mathbb{C},\ u\in [0,1]$ such that ${\varphi_1=\varphi},\ \varphi_0=\id$ and $\abs{\varphi_u(z)-z}\to 0$ uniformly in $u$ as $S_0\ni z\to \infty$.
\end{defn}

\begin{defn}[$\Id$-type points in $\mathcal{T}_f$]
	We say that $[\varphi]\in\mathcal{T}_f$ is \idt\ if $[\varphi]$ contains an $\id$-type map.	
\end{defn}

Alternatively, we can replace $S_0$ in Definition~\ref{defn:id_type} by endpoints of $S_0$, i.e.\ $P_f$. In this case there are more $\qc$ maps identified as \idt. Nevertheless, it is not hard to show that they define the same subset \idt\ points in $\mathcal{T}_f$. Definition~\ref{defn:id_type} is simply more suitable for our purposes.

\begin{defn}[Isotopy \idt\ maps]
	We say that $\psi_u$ is an \emph{isotopy \idt\ maps} if $\psi_u$ is an isotopy through maps \idt\ such that $\abs{\psi_u(z)-z}\to 0$ uniformly in $u$ as $S_0\ni z\to \infty$.
\end{defn}

The identity is obviously \idt. Since $c^{-1}$ is equal to identity outside of a compact set, it is also \idt, as well as the composition $\varphi\circ c$ of the capture with any $\id$-type map.

Next theorem claims that the $\sigma$-map is invariant on the subset of $\id$-type points in $\mathcal{T}_f$.

\begin{thm}[Invariance of $\id$-type points]
	\label{thm:pullback_of_id_type}
	If $[\varphi]$ is \idt, then $\sigma[\varphi]$ is \idt\ as well.
	
	More precisely, if $\varphi$ is \idt, then there is a unique $\id$-type map $\hat{\varphi}$ such that $\varphi\circ f\circ\hat{\varphi}^{-1}$ is entire.
	
	Moreover, if $\varphi_u$ is an isotopy of $\id$-type maps, then the functions $\varphi_u\circ f\circ\hat{\varphi}_u^{-1}$ have the form $e^z+\kappa_u$ where $\kappa_u$ depends continuously on $u$.
\end{thm}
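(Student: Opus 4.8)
The plan is to construct $\hat{\varphi}$ as a specific solution of the Beltrami equation associated to $\varphi\circ f$ (rather than an arbitrary one provided by Uniformization), chosen so that the $\id$-type asymptotics are preserved, and then to read off the normalization from the form of the resulting entire function. First I would fix the Beltrami coefficient $\mu := \mu_{\varphi\circ f}$ on $\mathbb{C}$, where $f = c\circ f_0$ is the captured exponential function. Outside a compact set $c$ is the identity and $f_0 = e^z + \kappa_0$, so on a neighborhood of each post-singular ray tail $R_n$ near $\infty$ the map $f$ agrees with $f_0$; combined with the fact that $\varphi$ is $\id$-type (hence asymptotically the identity on $S_0 = \bigcup_n R_n$), the coefficient $\mu$ is controlled near $\infty$ along the dynamic rays of $f$. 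Solving $\hat{\varphi}_{\overline z} = \mu\,\hat{\varphi}_z$ gives a \qc\ map $\hat{\varphi}$, unique up to post-composition with an affine map; the remaining affine freedom is pinned down precisely by demanding $g := \varphi\circ f\circ\hat{\varphi}^{-1}$ have the normalized exponential form $e^z + \kappa$. Since $g$ is entire (it is holomorphic away from finitely many a priori singularities which are removable by standard arguments, transcendental of finite type with one asymptotic value), and $f_0 = e^z+\kappa_0$ is of this shape, $g$ is affinely conjugate to $\exp$, and one uses the two-point normalization of the Measurable Riemann Mapping Theorem to make $g(z) = e^z + \kappa$ exactly. This fixes $\hat{\varphi}$ uniquely.

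Next I would verify that this $\hat{\varphi}$ is itself $\id$-type, which is the heart of the statement. The dynamic rays of $f$ near $\infty$ are, by Theorem~\ref{thm:as_formula}, asymptotic to horizontal lines $t + 2\pi i s_n$, and the same asymptotic formula holds for the rays of $g = e^z + \kappa$ (applied with the appropriate external address). Since $\varphi$ moves $S_0$ by an amount tending to $0$ near $\infty$, the ray tails $\varphi(R_n)$ near $\infty$ have the same asymptotics as the $R_n$; pulling back through the conformal map $g$ versus through $f$, the fact that $f$ and $f_0$ agree near $\infty$ and that $g$ and $f_0$ have identical ray asymptotics forces $\hat{\varphi}$ to preserve the asymptotics of each $R_n$ near $\infty$, i.e.\ $|\hat{\varphi}(z) - z|\to 0$ as $S_0\ni z\to\infty$. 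To upgrade this to the isotopy condition in Definition~\ref{defn:id_type}, I would take the isotopy $\varphi_u$ witnessing that $\varphi$ is $\id$-type, run the same construction for each $u$ to get $\hat{\varphi}_u$ (here one needs holomorphic dependence of the normalized solution of the Beltrami equation on the coefficient, which is exactly the last clause of the Measurable Riemann Mapping Theorem, to guarantee continuity in $u$), observe $\hat{\varphi}_0 = \id$ since $\varphi_0 = \id$ yields $g = f_0$ and the trivial lift, and conclude that $u\mapsto\hat{\varphi}_u$ is a continuous family of maps whose asymptotic displacement on $S_0$ tends to $0$ uniformly in $u$ — using that the $O(e^{-t/2})$ bounds in Theorem~\ref{thm:as_formula} are uniform and that $\varphi_u$ moves $S_0$ uniformly little.

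Finally, for the last assertion, the family $g_u = \varphi_u\circ f\circ\hat{\varphi}_u^{-1}$ is a continuously varying family of entire transcendental maps of finite type with a single asymptotic value and no critical points, hence each is affinely conjugate to $\exp$; the normalization built into the construction forces $g_u(z) = e^z + \kappa_u$, and $\kappa_u$ depends continuously on $u$ because $\hat{\varphi}_u$ does (via holomorphic dependence on the Beltrami coefficient $\mu_u = \mu_{\varphi_u\circ f}$, which varies continuously in $L^\infty$ since $\varphi_u$ does) and $\kappa_u$ is a continuous functional of $g_u$, e.g.\ $\kappa_u = g_u(0) - 1$. I expect the main obstacle to be the $\id$-type verification for $\hat{\varphi}$: one must carefully compare the ray asymptotics of $f$ and of $g$ near $\infty$ and argue that the conformal change of coordinates relating them cannot distort the rays, which requires combining the uniform asymptotic expansions of Theorem~\ref{thm:as_formula} with a normal-families or removable-singularity argument near $\infty$ to control $\hat{\varphi}$ there; the continuity and uniqueness parts are comparatively routine given the Measurable Riemann Mapping Theorem.
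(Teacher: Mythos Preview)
Your overall strategy matches the paper's: solve the Beltrami equation for $\mu_{\varphi\circ f}$, normalize so that the resulting entire map has the form $e^z+\kappa$, and verify the $\id$-type condition by running the construction along an isotopy. But there is a concrete error and a related gap in the normalization step.

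The error is the claim that $\hat{\varphi}_0=\id$ when $\varphi_0=\id$. Since $f=c\circ f_0$ is only quasiregular (the capture $c$ is not conformal), $\id\circ f\circ\id^{-1}=f$ is not entire, so the lift of the identity is certainly not the identity. Relatedly, your normalization ``require $g=e^z+\kappa$'' does not pin down $\hat{\varphi}$ uniquely: replacing $\hat{\varphi}$ by $\hat{\varphi}+2\pi i n$ for any $n\in\mathbb{Z}$ gives the same $g$, so there is a $2\pi i\mathbb{Z}$ ambiguity you have not resolved. Without a canonical starting point for the isotopy at which the lift is known explicitly, you cannot make a continuous choice of this translate, and the $\id$-type conclusion---which is sensitive to exactly which $2\pi i n$ you pick---does not follow.

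The paper fixes both issues with one move: instead of running the isotopy from $\id$ to $\varphi$, it concatenates with the isotopy $c_u^{-1}$ to obtain a path $\psi_u$ of $\id$-type maps from $\psi_0=c^{-1}$ to $\psi_1=\varphi$. At $u=0$ one has $\psi_0\circ f=c^{-1}\circ c\circ f_0=f_0$, which \emph{is} entire, so $\hat{\psi}_0=\id$ on the nose, and the branch of $\log\beta_u$ in the renormalization $\hat{\psi}_u=\alpha_u\tilde{\psi}_u+\log\beta_u$ is then fixed by continuity from this anchor. This also makes the $\id$-type verification a one-line computation: on $S_0$ near $\infty$ one has $f=f_0$, hence
\[
\hat{\psi}_u(z)=\log\bigl(\psi_u(f_0(z))-\kappa_u\bigr)=\log\bigl(e^z+\kappa_0-\kappa_u+o(1)\bigr)=z+o(1),
\]
with no appeal to the ray asymptotics of Theorem~\ref{thm:as_formula} or to any normal-families argument near $\infty$.
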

\begin{proof}
	First, since $\hat{\varphi}$ is defined up to a postcomposition with an affine map, and $S_0$ contains continuous curves joining finite points to $\infty$, uniqueness is obvious. We only have to prove existence.
	
	Since $c^{-1}$ is also \idt, $\varphi$ can be joined to $c^{-1}$ through an isotopy \idt\ maps: we can simply take in the correct order the concatenation of two isotopies to $\id$ (concatenation of $\varphi\sim\id$ and $\id\sim c^{-1}$).
	
	Let $\psi_u, u\in [0,1]$ be this isotopy with $\psi_0=c^{-1}$ and $\psi_1=\varphi$ and let $\tilde{\psi}_u$ be the unique isotopy of \qc\ homeomorphisms fixing $0,1$ and $\infty$ (or simply the solutions of the Beltrami equation~\ref{eqn:Beltrami} for $\mu_{\psi\circ f}$ normalized to fix $0,1,\infty$.) Then the maps $h_u:=\psi_u\circ f \circ \tilde{\psi}_u^{-1}$ will have the form 	
	$$h_u(z)=\beta_u e^{\alpha_u z}+\kappa_u$$ 	
	where $\alpha_u,\beta_u,\kappa_u$ depend continuously on $u$.
	
	Next define $\hat{\psi}_u(z):=\alpha_u\tilde{\psi}_u(z) +\log(\beta_u)$, where the branch of the logarithm is chosen so that $\hat{\psi}_0=\id$. In this case
	$$g_u(z)=\psi_u\circ f \circ \hat{\psi}_u^{-1}(z)=e^z+\kappa_u$$ 
	is the homotopy of entire functions such that $g_u\in\mathcal{N}$, $g_0=f_0$ and $\kappa_u$ is continuous in $u$ and bounded on $[0,1]$.	
	
	Now, let $z\in S_0$ and $w=f(z)=f_0(z)$. Since $\abs{\psi_u(w)-w}\to 0$ as $S_0\ni w\to\infty$ uniformly in $u$, we can write $\psi_u(w)=w+o(1)$.  
	
	Then	
	$$\hat{\psi}_u(z)=\log(\psi_u(w)-\kappa_u)=\log(f_0(z)-\kappa_u)=\log(e^z+\kappa_0-\kappa_u).$$
	Hence, due to continuity and boundedness of $\kappa_0-\kappa_u$, for $S_0\ni z\to\infty$ we have
	$$\abs{\hat{\psi}_u(z)-z}\to 0$$
	uniformly in $u$.
	
	The theorem is proven with the assignment $\hat{\varphi}:=\hat{\psi}_1$.
\end{proof}

Theorem~\ref{thm:pullback_of_id_type} particularly means that the notion of external address is preserved when we iterate $\id$-type maps, in a sense that under proper normalization, that is, when we choose the map $\hat{\varphi}$ out of its \tei\ equivalence class, the images of dynamic rays under $\hat{\varphi}$ preserve their original asymptotics (horizontal straight lines). 

\begin{remark}
	In the sequel we will use the hat-notation from Theorem~\ref{thm:pullback_of_id_type}. That is, $\hat{\varphi}$ denotes the unique $\id$-type map so that $\varphi\circ f\circ\hat{\varphi}^{-1}$ is entire. Due to Theorem~\ref{thm:pullback_of_id_type} this notation makes sense whenever $\varphi$ is \idt.	
\end{remark}

\section{Spiders}
\label{sec:spiders}

In this section we introduce the notion of a spider and a special tool which is supposed to bound the complexity of ``spider legs'' --- altogether this allows to provide numerical estimates on the homotopy information of $\id$-type points in the \tei\ space. We start with the latter.

\subsection{Curves in the punctured half-plane}
\label{subsec:r_curves}

We begin with the definition that slightly extends the notion of the homotopy of curves fixing endpoints.

\begin{defn}[Pinched endpoints homotopy]
	\label{defn:pinched_endpts_homotopy}
	Let $U\subset\hat{\mathbb{C}}$ be a path-connected domain, and $\gamma:(0,\infty)\to U$ be a curve such that $\lim\limits_{t\to 0}\gamma(t)\in \overline{U}$ and $\lim\limits_{t\to\infty}\gamma(t)\in\overline{U}$.
	
	We denote by $[\gamma]_U$ the equivalence class of $\gamma$ in $U$ under homotopies preserving the limits at $t=0$ and $t=\infty$. 
\end{defn}

If the limit of $\gamma$ at $0$ or $\infty$ belongs to $U$, this is the usual homotopy fixing endpoint, but when the limit is in the boundary of $U$, the homotopies are considered through curves in $U$ that are ``pinched'' at the endpoint (possibly two) in the boundary.

We consider such ``pinched'' homotopy types for two reasons: first, it describes the homotopy type of the images of a ray tail (from $S_0$) under $\id$-type maps representing the same point in the \tei\ space; second, as will be seen later, we obtain estimates on the ``complexity'' of the homotopy information of $\id$-type points.

Now, we define \emph{r-curves} (``r'' means right) which model ray tails (or more precisely, their union with $\infty$).

\begin{defn}[r-curve]	
	We say that a continuous curve $\gamma:[0,\infty]\to\hat{\mathbb{C}}$ is an \emph{r-curve} if
	\begin{enumerate}
		\item $\gamma[0,\infty)\subset\mathbb{C}$,
		\item $\gamma(\infty)=\infty$,
		\item $\Re \gamma(t)\to+\infty$ as $t\to\infty$,
	\end{enumerate}
\end{defn}

Let $V=\{v_i\}\subset\mathbb{C}$ be a finite or a countably infinite sequence (of punctures) such that $\Re v_i\to +\infty$,  and  let $\gamma$ be an r-curve such that $\gamma(0)\in V$ and $\gamma|_{\mathbb{R}^+}\subset\mathbb{C}\setminus V$.

\begin{remark}
	In the case when $V$ is infinite the surface $\mathbb{C}\setminus V$ is supposed to model the surface $\mathbb{C}\setminus P_f$.
\end{remark}

Choose a point $r\in\mathbb{R}$ such that 
$$r < \min \{\min\limits_{v\in V}\{\Re v\},\min\limits_{t\in [0,\infty]}\{\Re \gamma(t)\}\}. $$ 
Both $V$ and $\gamma|_{\mathbb{R}^+}$ are contained in $\mathbb{H}_r$ while $\gamma(\infty)=\infty\in\partial\mathbb{H}_r$.

We want to have an algebraic description of $[\gamma]_{\mathbb{H}_r\setminus V}$ (or more precisely $[\gamma|_{\mathbb{R}^{+}}]_{\mathbb{H}_r\setminus V}$), for instance, in terms of certain fundamental group. But the problem is that the endpoints of $\gamma$ are punctures, that is, the homotopy is not allowed  to pass through them. We cure this problem as follows.

Let 
$$H_r=H_r(V,\gamma):=(\mathbb{H}_r\setminus V)\cup\{\infty\}.$$

For a curve $\delta:(0,\infty)\to\mathbb{C}\setminus V$ with $\lim_{t\to 0}\delta(t)\in V$ and $\lim_{t\to \infty}\delta(t)=\infty$ let $\overline{\delta}:(0,\infty]\to H_r$ be the extension of this curve to $\infty$, that is, $\overline{\delta}(t)=\delta(t)$ for $t\in(0,\infty)$ and $\overline{\delta}(\infty)=\infty$.

\begin{lmm}
	\label{lmm:bar_paths_equivalence}
	For curves $\delta:(0,\infty)\to\mathbb{C}\setminus V$ with $\lim\limits_{t\to 0}\delta(t)\in V$ and $\lim\limits_{t\to \infty}\delta(t)=\infty$ the correspondence $[\delta]_{\mathbb{H}_r\setminus V}\mapsto[\overline{\delta}]_{H_r}$ is well defined and injective.
\end{lmm}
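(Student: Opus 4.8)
The plan is to dispatch well-definedness by a direct observation and to reduce injectivity to a statement about fundamental groups, where the hypothesis $\Re v_i\to+\infty$ does the real work. For well-definedness: if $\delta_0,\delta_1$ have the same limits at $0$ and at $\infty$ and are joined by a homotopy $h$ in $\mathbb{H}_r\setminus V$ preserving those limits, then setting $h(\cdot,\infty)\equiv\infty$ extends $h$ to a homotopy in $H_r$ between $\overline{\delta_0}$ and $\overline{\delta_1}$ preserving limits — continuity at $t=\infty$ is exactly the preservation of the limit there — so $[\overline{\delta_0}]_{H_r}=[\overline{\delta_1}]_{H_r}$, and the correspondence descends to homotopy classes.

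For injectivity, suppose $\overline{\delta_0}\simeq\overline{\delta_1}$ in $H_r$ via $h\colon[0,1]\times(0,\infty]\to H_r$ with $\lim_{t\to 0}h(s,t)=v$ and $h(s,\infty)=\infty$. The only obstruction to $h$ being a pinched homotopy in $\mathbb{H}_r\setminus V$ is that it may take the value $\infty$ on $[0,1]\times(0,\infty)$, so I would modify it to avoid $\infty$ there. The set $h^{-1}(\infty)$ is closed, meets neither vertical side nor a neighbourhood of $\{t=0\}$, and — since a tube-lemma argument gives $h(s,t)\to\infty$ uniformly in $s$ as $t\to\infty$ — outside a collar $[0,1]\times[T,\infty]$ on which $h$ maps into an arbitrarily small neighbourhood $N$ of $\infty$ it is a compact subset of the open square. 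On that compact part I would cover it by finitely many disjoint disks $B_j$ with $h(B_j)\subset N$; then $h|_{\partial B_j}$ is a loop in $\mathbb{H}_r\setminus V$ bounding the disk $h|_{B_j}$ inside $H_r$, and if the inclusion $\mathbb{H}_r\setminus V\hookrightarrow H_r$ is $\pi_1$-injective this loop already bounds a disk in $\mathbb{H}_r\setminus V$, which I splice in for $h|_{B_j}$. The collar is handled the same way after collapsing $[0,1]\times\{\infty\}$ to the point $\infty$, invoking the analogous $\pi_1$-injectivity for a punctured neighbourhood of $\infty$ (which keeps the splicings near $\infty$, so the modified map stays continuous up to $t=\infty$). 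Thus everything reduces to $\pi_1$-injectivity of $\mathbb{H}_r\setminus V\hookrightarrow H_r$.

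To prove that: as a planar domain, $\mathbb{H}_r\setminus V$ has free fundamental group, with a basis of small loops $\gamma_v$, $v\in V$. Since $\Re v_i\to+\infty$, for each $M$ the properly embedded arc $\{|z|=M\}\cap\mathbb{H}_r$ separates the finitely many punctures with $|z|<M$ from the infinitely many with $|z|>M$, and van Kampen yields a free product $\pi_1(\mathbb{H}_r\setminus V)=F_{<M}\ast F_{>M}$ with $F_{<M},F_{>M}$ the corresponding free factors. Let $L$ be a loop in $\mathbb{H}_r\setminus V$ null-homotopic in $H_r$; it meets only finitely many punctures, so for $M$ large $[L]\in F_{<M}$. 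A nullhomotopy $f\colon D^2\to H_r$ of $L$ meets $\infty$ in a compact subset of the interior; excising finitely many small disks $B_j$ covering it, the restriction $f|_{D^2\setminus\bigcup B_j}$ maps into $\mathbb{H}_r\setminus V$ and expresses $[L]=[f|_{\partial D^2}]$ as a product of conjugates of the classes $[f|_{\partial B_j}]$, each of which can be taken in an arbitrarily small neighbourhood of $\infty$, hence in $F_{>M}$ after enlarging $M$. The retraction $F_{<M}\ast F_{>M}\to F_{<M}$ killing $F_{>M}$ fixes $[L]$ and sends the right-hand side to $1$, so $[L]=1$. The same computation performed inside a neighbourhood of $\infty$ gives the local version used above.

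The hard part is the interplay of the two ``infinities''. It is the accumulation of punctures at $\infty$ (forced by $\Re v_i\to+\infty$) that makes the statement true — a nontrivial loop cannot be unwound by sliding it over $\infty$, since that would push it across infinitely many punctures — and it is the same accumulation that makes $H_r$ fail to be locally simply connected at $\infty$ and lets $h^{-1}(\infty)$ accumulate onto the slice $t=\infty$. The free-product retraction is the device that turns ``bounds a disk through $\infty$'' into ``bounds a disk avoiding $\infty$'', and the collar plus uniform-convergence argument is what confines the surgery near $\infty$; making these two cooperate cleanly, especially the continuity of the spliced homotopy up to $t=\infty$, is the main technical point I would have to be careful about.
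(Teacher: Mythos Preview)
Your proposal is essentially correct and takes a route that is different from, but closely related to, the paper's. The paper rebuilds the homotopy strip by strip: it slices the domain along $\{t=n\}$, replaces each vertical slice $\beta_n=\Gamma|_{\{n\}\times[0,1]}$ by a curve $\beta_n'\subset H_r\setminus\{\infty\}$ in the same rel-endpoints class, asserts that the resulting square-boundary loops $\alpha_n$ are contractible in $H_r\setminus\{\infty\}$, fills them by disks $\alpha_n'$ chosen with images tending to $\infty$, and glues. You instead excise $h^{-1}(\infty)$ directly and refill the holes, reducing everything to $\pi_1$-injectivity of $\mathbb{H}_r\setminus V\hookrightarrow H_r$, which you then prove by a free-product retraction. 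That lemma is precisely what underlies the paper's bare assertion ``each $\alpha_n$ is contractible in $H_r\setminus\{\infty\}$'', so you have isolated and supplied the real topological content of the argument. Conversely, the paper's strip decomposition is a cleaner device for the continuity-at-$t=\infty$ issue than your collar surgery: once the $\alpha_n'$ are chosen with images shrinking to $\infty$, the glued homotopy automatically has the correct limit, whereas collapsing the collar and refilling (as you sketch) does not by itself return $\lim_{t\to\infty}h'(s,t)=\infty$; to make your approach work you would iterate the surgery over a nested sequence $N_k\searrow\{\infty\}$, which in effect reproduces the paper's strip picture. One minor point: an arbitrary compact set need not be covered by finitely many \emph{disjoint} disks; cover instead by finitely many disks and work with the (simply connected) components of their union, whose Jordan boundaries are the loops you feed into the $\pi_1$-injectivity lemma.
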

\begin{proof}
	That the correspondence is well defined follows from Definition~\ref{defn:pinched_endpts_homotopy}.
	
	Thus, we only need to prove that the curves $\delta_1$ and $\delta_2$ are homotopic in $\mathbb{H}_r\setminus V$ if $\overline{\delta}_1$ and $\overline{\delta}_2$ are homotopic in $H_r$.
	
	Let $v\in V$ be the puncture at which start both $\overline{\delta}_1$ and $\overline{\delta}_2$. Let also 
	$$\Gamma:[0,\infty]\times[0,1]\to H_r\cup\{v,\infty\}$$
	with $\Gamma(t,0)=\overline{\delta}_1$ and $\Gamma(t,1)=\overline{\delta}_2$ be a homotopy between $\overline{\delta}_1$ and $\overline{\delta}_2$ in $H_r$, that is, for $u\in[0,1]$ we have $\Gamma(0,u)=v$ and $\Gamma(\infty,u)=\infty$, and no inner point of the strip $B:=[0,\infty]\times[0,1]$ is mapped to $v$ (but can be mapped to $\infty$).
	
	Without loss of generality we might assume that $\Gamma([0,1]\times[0,1])$ is bounded, otherwise just rescale $\Gamma$ in the first parameter. For every integer $n>0$ let $\beta_n$ be the restriction of $\Gamma$ on $n\times[0,1]$, that is, a trajectory of a point $\overline{\delta}_1(n)$ under the homotopy. Since the image under $\Gamma$ of the boundary of the rectangle $[0,n]\times[0,1]$ is contractible, each $\beta_n$ is homotopic relative endpoints to a curve $\beta'_n$ contained in $H_r\setminus\{\infty\}$. Moreover, we can make a joint choice of all $\beta'_n$ so that $\beta'_n\to\infty$ as $n\to\infty$ (that is, the whole curve $\beta'_n$ appears in arbitrarily small neighborhoods of $\infty$ in $H_r$ when $n$ is big).
	
	Now, let $\alpha_n:\partial\mathbb{D}\to H_r\setminus\{\infty\}$ be the closed curve in $H_r\setminus\{\infty\}$ that is equal to the concatenation $$\beta_n^{-1}\cdot\overline{\delta}_1|_{[n,n+1]}\cdot\beta_{n+1}\cdot(\overline{\delta}_2|_{[n,n+1]})^{-1}.$$
	Each $\alpha_n$ is contractible in $H_r\setminus\{\infty\}$, so one can continuously extend $\alpha_n$ to the unit disk and obtain a map $\alpha'_n:\overline{\mathbb{D}}\to H_r\setminus\{\infty\}$ that coincides with $\alpha_n$ on $\partial\mathbb{D}$. Moreover, since $\beta'_n\to\infty$, we can choose $\alpha'_n$ so that $\alpha'_n(\overline{\mathbb{D}})\to\infty$. After ``gluing'' all $\alpha'_n$ and the restriction $\Gamma|_{[0,1]\times[0,1]}$ together we obtain the desired homotopy.
	
	This finishes the proof of the lemma.
\end{proof}

\begin{figure}[h]
	\includegraphics[width=\textwidth]{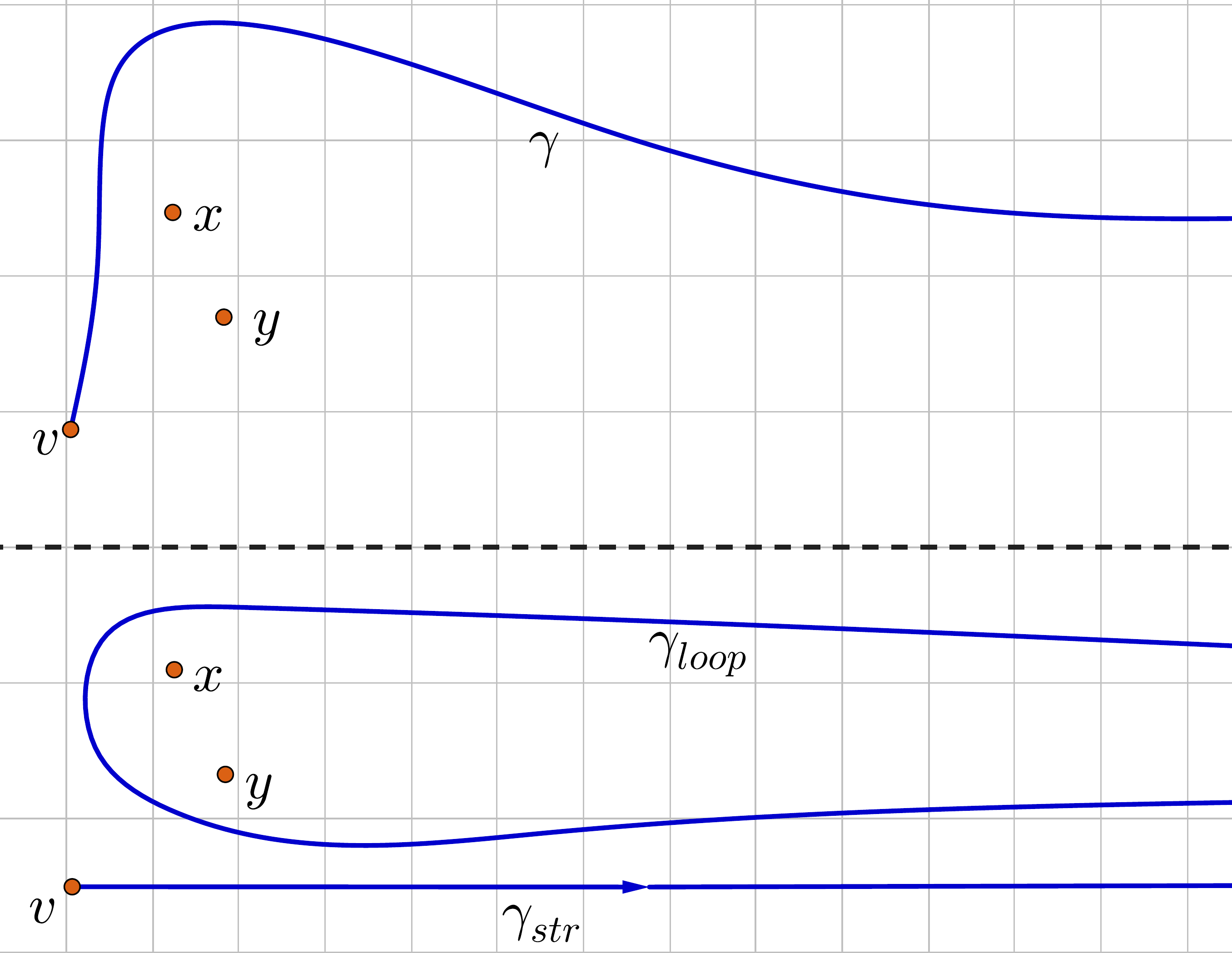}
	\caption{Example of splitting $\gamma$ (above) into $\gamma_{str}$ and $\gamma_{loop}$}
	\label{pic:split}
\end{figure} 

Now we are ready to describe the homotopy type of $\gamma$. In the sequel we only  consider the homotopy types in $H_r$ and denote $[\gamma]=[\gamma]_{H_r}$.

Join $\gamma(0)$ to $\infty$ by the straight horizontal ray $\langle\gamma(0)+t, t>0\rangle$. For every intersection of this ray with some point $v\in V$, choose a small closed disk with center at $v$ such that all disks are mutually disjoint and contain exactly one point of $V$ (its center), and replace the parts of the straight line which are the diameters of the disks by the upper half-circles of the disk boundary. 

The obtained curve $\gamma_{str}$ defines the homotopy class $[\gamma_{str}]$ in $H_r$, containing curves that approximate ``from above'' the straight horizontal ray from $\gamma(0)$ to $\infty$. Consider also a loop $\gamma_{loop}\subset H_r$ with the base point at $\infty$ such that $[\gamma_{str}]\cdot [\gamma_{loop}]=[\gamma]$ in $H_r$. It is not hard to show that such a loop exists: for example, one can slightly homotope $\gamma$ in $H_r$ so that for some $\epsilon>0$ we have $\gamma|_{(0,\epsilon]}(t)=\gamma(0)+t$ --- then $[\gamma_{loop}]=[(\gamma_{str}|_{[\epsilon,\infty]})^{-1}]\cdot[\gamma|_{[\epsilon,\infty]}]$. The loop homotopy type $[\gamma_{loop}]$ can be viewed as an element of the fundamental group $\pi_1(H_r,\infty)$. The homotopy class $[\gamma_{loop}]$ and the point $\gamma(0)$ uniquely define $[\gamma]$.

From now on we assume that $V$ is finite. Any such $V$ and an r-curve $\gamma$ with $\gamma(0)\in V$ and $\gamma|_{(0,\infty)}\in\mathbb{C}\setminus V$ define the map  

$$W: (V,\gamma)\mapsto W(V,\gamma)\in F(V),$$
where $F(V)$ is the free group on $V$, as follows. 

For every $w\in V$ construct a generator $g_w$ of $\pi_1(H_r,\infty)$ in the following way. Surround each $v\in V$ lying on the straight horizontal ray $\langle w+t, t\geq0\rangle$ by a disk $D_v$ with center at $v$ and radius $r_v$, so that the disks are mutually disjoint and contain only one point of $V$. Define the injective curve $a:[0,\infty]\to H_r$ so that $a(t)$ is equal to $w+r_w+t$ when $w+r_w+t$ is outside of the disks $D_v$ and coincides with the upper half-circles of $\partial D_v$ whenever $w+r_w+t$ passes through $D_v$ for $v\neq w$, and let $c(t):=w+r_u e^{-2\pi i t}$ for $t\in [0,1]$. Then $g_w:=[a^{-1}\cdot c\cdot a]$, and it does not depend on a particular choice of radii of the disks $D_v$. In other words, we start at $\infty$, move ``to the left'' along a straight line passing ``from above'' the points $v\in V$ lying in our way, then make a loop around $w$ in the clockwise direction, and return to $\infty$ the along the same curve, passing occasional points in $V$ ``from above''.

With this preferred set of generators the homotopy type $[\gamma_{loop}]$ has the unique representation in $\pi_1(H_r,\infty)$. That is, $[\gamma_{loop}]=g_{v_1}g_{v_2}\dots g_{v_m}$ where $v_i \in V, m\in \mathbb{N}$. Define $$W(V, \gamma):=v_1 v_2\dots v_m. $$ It is clear that $W$ does not depend on a particular choice of $H_r$ (i.e.\ the choice of $r$). 

\begin{figure}[h]
	\includegraphics[width=\textwidth]{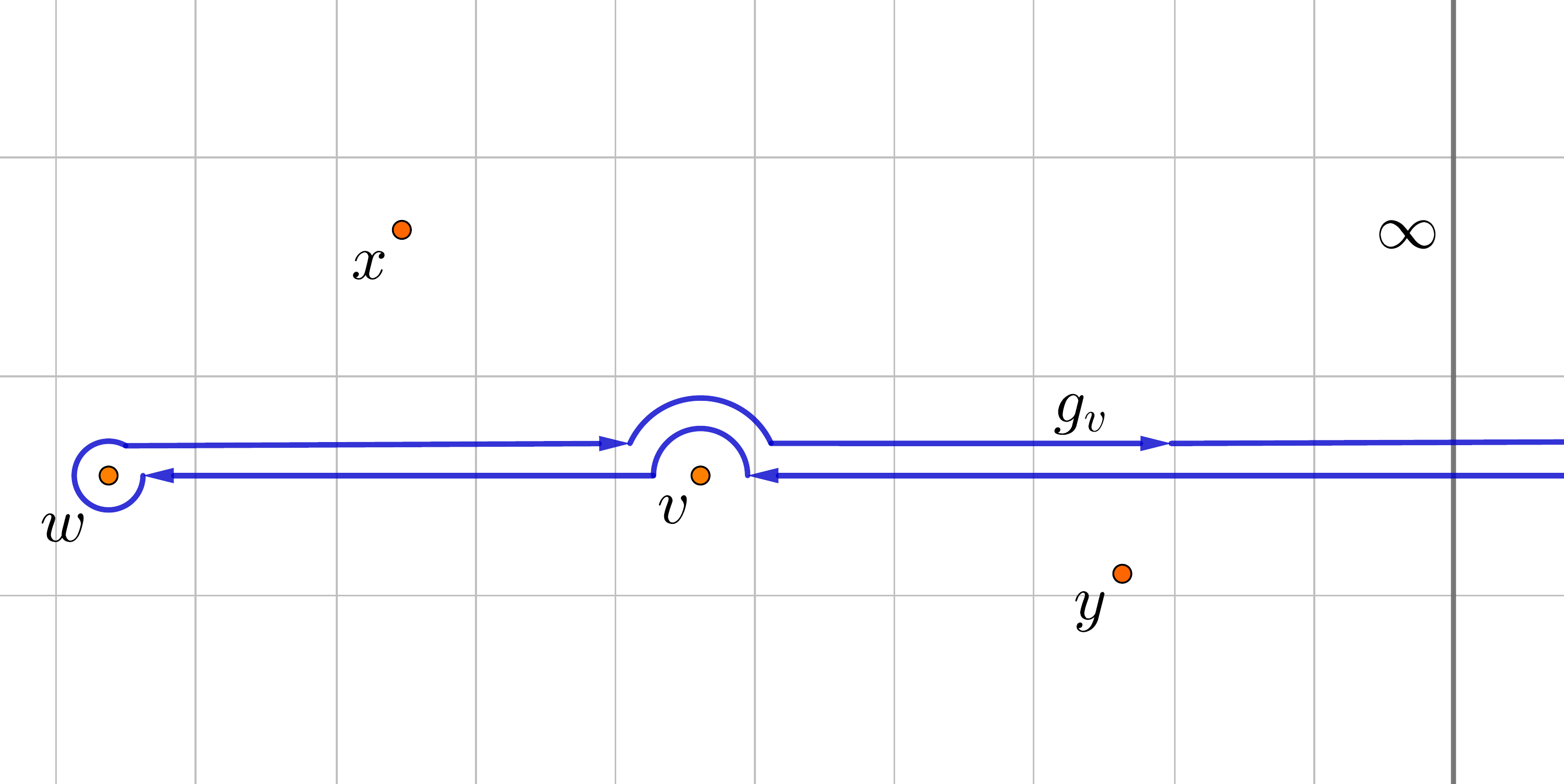}
	\caption{Example of a generator $g_w$, for a case when $\Im w=\Im v$}
	\label{pic:generator}
\end{figure}

\begin{remark}
	\label{rmrk:choice_of_generators}
	There is a freedom of choice for the generators and $\gamma_{str}$, at least in the sense of passing points of $V$ below or above, so it is just according to our convention that we pass them from above.	
\end{remark}

We finish this subsection with a few straightforward but useful lemmas.

\begin{lmm}
	\label{lmm:homotopy_in_rectangle}
	Let $\gamma\subset \mathbb{C}\setminus V$ be an r-curve with $\gamma(0)\in V$. Further, let
	$$B:=[x_1,\infty)\times[y_1,y_2]$$
	be a (right-)infinite strip so that $\gamma|_{[0,\infty)}\subset B$. Denote $V_1:=V\cap B$. Then $W(V,\gamma)\in F(V_1)$, where $F(V_1)$ is identified with its image in $F(V)$ under the natural embedding.	
\end{lmm}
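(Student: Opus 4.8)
The plan is to show that the homotopy type $[\gamma]$ in $H_r$ admits a representative that, together with the generators $g_w$ of $\pi_1(H_r,\infty)$ used to define $W$, only involves punctures in $V_1 = V \cap B$; then the word $W(V,\gamma)$, being read off from the reduced expression of $[\gamma_{loop}]$ in these generators, will automatically be a word in $F(V_1)$.

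First I would exploit the freedom in the construction of $W$ noted in Remark~\ref{rmrk:choice_of_generators}: the value $W(V,\gamma)$ is independent of the choice of $r$ (stated right after the definition of $W$) and of the radii of the disks $D_v$. So I would choose $r < x_1$ and work in $H_r$, and pick the base horizontal ray from $\gamma(0)$ and the auxiliary rays defining the generators $g_w$ to run inside a slightly enlarged strip $B' = [x_1', \infty) \times [y_1', y_2']$ with $B \subset B'$ and $B'$ still disjoint from $V \setminus V_1$ (possible since $V$ is finite and has no points with real part $< x_1$ except... — here one must be slightly careful: $V_1$ is exactly $V \cap B$, and punctures of $V$ outside $B$ either have real part $\geq x_1$ but imaginary part outside $[y_1,y_2]$, or could lie to the left; since $\gamma(0) \in V$ and $\gamma(0) \in B$, and we only need the generators for punctures actually appearing in $[\gamma_{loop}]$, I would instead argue directly as in the next step).

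The cleaner route: since $\gamma|_{[0,\infty)} \subset B$ and $B$ is simply connected and contains $\gamma(0)$, the loop $\gamma_{loop}$ — obtained by concatenating $\gamma$ with the reverse of the standard straight curve $\gamma_{str}$ from $\gamma(0)$ to $\infty$ — can be homotoped (rel basepoint $\infty$, within $H_r$) into the region $\overline{B'}$, where $B'$ is a slightly fattened version of $B$ chosen to avoid all of $V \setminus V_1$; note $\gamma_{str}$ can be taken inside $B'$ because it just follows the horizontal ray from $\gamma(0)$, staying at imaginary part $\Im\gamma(0) \in [y_1,y_2]$ apart from small detours. Thus $[\gamma_{loop}]$ lies in the image of $\pi_1(H_r \cap B', \infty) \to \pi_1(H_r, \infty)$. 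Now $H_r \cap B'$ deformation retracts onto a wedge of circles, one around each puncture in $V_1$, and the inclusion-induced map sends these generators precisely to the $g_w$, $w \in V_1$. Hence $[\gamma_{loop}]$ is a product of the $g_w$ with $w \in V_1$ (and their inverses), and reducing, $W(V,\gamma) \in F(V_1)$ under the natural embedding $F(V_1) \hookrightarrow F(V)$.

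The main obstacle is bookkeeping rather than conceptual: one must verify that the preferred generators $g_w$ of $\pi_1(H_r,\infty)$ — which are defined using horizontal rays that may a priori pass above punctures of $V$ outside $B$ — can be replaced by generators supported in $B'$ without changing the resulting word, i.e. that the identification $F(V_1) \subset F(V)$ is compatible with the chosen generating sets. This follows because passing a generator's defining ray above a puncture $v \notin B$ that does not lie between $w$ and $\infty$ does not alter its class, and by choosing $B'$ and the radii appropriately we can arrange that no puncture outside $B$ obstructs any ray used for a $w \in V_1$; once this is checked, the word read off in the $B'$-generators coincides with $W(V,\gamma)$ and visibly lies in $F(V_1)$.
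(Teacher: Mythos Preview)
Your argument is correct and is exactly the kind of unpacking the paper's one-line proof (``Follows from the construction of $W(V,\gamma)$'') invites; there is no genuinely different idea here, only a more explicit version of the same observation.

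One remark that would let you drop the hedging in your final paragraph: the ``main obstacle'' you flag is in fact empty. For $w\in V_1\subset B$ the defining horizontal ray of $g_w$ runs at height $\Im w\in[y_1,y_2]$, so any puncture it meets has the same imaginary part and real part $\ge\Re w\ge x_1$, hence already lies in $V_1$. Thus the preferred generators $g_w$ with $w\in V_1$ are represented by loops entirely inside a slight fattening $B'$ of $B$ with $B'\cap V=V_1$, and the same holds for $\gamma_{str}$ since $\gamma(0)\in B$. So $[\gamma_{loop}]$ is carried by the sub-wedge around the punctures $V_1$, and the word you read off is literally the same whether computed in $H_r$ or in $H_r\cap B'$; no replacement or comparison of generating sets is needed.
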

\begin{proof}
	Follows from the construction of $W(V,\gamma)$.	
\end{proof}

By $\abs{W(V,\gamma)}$ we denote the length of the word $W(V,\gamma)$ in a free group. It is $\abs{W(V,\gamma)}$ that will soon allow us to measure the ``complexity'' of the homotopy information of $\id$-type points in the \tei\ space.

\begin{lmm}
	If $V_1\subset V$ and $\gamma(0)\in V_1$, then $\abs{W(V_1,\gamma)}\leq\abs{W(V,\gamma)}$.
\end{lmm}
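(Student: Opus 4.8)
The plan is to show that forgetting punctures outside $V_1$ induces a length-non-increasing map on the relevant loop homotopy types, and then read off the word-length inequality from the construction of $W$. Concretely, let $r$ be chosen as in the construction so that both $V$ and $\gamma|_{[0,\infty)}$ lie in $\mathbb{H}_r$, and consider the inclusion $\iota\colon H_r(V,\gamma)\hookrightarrow H_r(V_1,\gamma)$ of the punctured half-planes-with-$\infty$. This inclusion is defined because $V_1\subset V$, so $\mathbb{H}_r\setminus V\subset\mathbb{H}_r\setminus V_1$, and it sends $\infty$ to $\infty$. Since $\gamma(0)\in V_1$, the curve $\gamma$ is an r-curve with $\gamma(0)\in V_1$ and $\gamma|_{(0,\infty)}\subset\mathbb{C}\setminus V\subset\mathbb{C}\setminus V_1$, so $W(V_1,\gamma)$ is defined. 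The induced homomorphism $\iota_*\colon\pi_1(H_r(V,\gamma),\infty)\to\pi_1(H_r(V_1,\gamma),\infty)$ carries the loop type $[\gamma_{loop}]$ computed in the first space to the loop type $[\gamma_{loop}]$ computed in the second (both are obtained from the \emph{same} curve $\gamma$ and the \emph{same} straight-ray reference $\gamma_{str}$ based at $\gamma(0)$, only read in a larger surface).

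First I would check how $\iota_*$ acts on the preferred generators. For $w\in V_1$, the generator $g_w\in\pi_1(H_r(V,\gamma),\infty)$ is built from a left-going arc $a$ passing above the punctures of $V$ encountered along the horizontal ray $\langle w+t\rangle$, a clockwise loop $c$ around $w$, and $a^{-1}$. Under $\iota$ the arc $a$ becomes an arc that still goes left and still passes above each encountered puncture, but now only the punctures in $V_1$ are present; by a homotopy in $H_r(V_1,\gamma)$ that straightens out the (now unnecessary) half-circle detours around the forgotten punctures, $\iota_*(g_w)$ is exactly the preferred generator $g_w\in\pi_1(H_r(V_1,\gamma),\infty)$ associated to $w$ in the smaller puncture set. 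For $w\in V\setminus V_1$, the loop $c$ around $w$ bounds a disk in $H_r(V_1,\gamma)$ (no puncture of $V_1$ inside), so $\iota_*(g_w)=1$.

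Now combine: writing $[\gamma_{loop}]=g_{v_1}g_{v_2}\cdots g_{v_m}$ in $\pi_1(H_r(V,\gamma),\infty)$ with $W(V,\gamma)=v_1v_2\cdots v_m$, applying $\iota_*$ gives $[\gamma_{loop}]=\prod_{i\colon v_i\in V_1} g_{v_i}$ in $\pi_1(H_r(V_1,\gamma),\infty)$, i.e.\ a word in the preferred generators obtained by deleting the letters of $W(V,\gamma)$ that lie in $V\setminus V_1$. Since $W(V_1,\gamma)$ is by definition the reduced/canonical expression of this same loop type in the preferred generators, and deletion of letters followed by free reduction can only shorten a word, we get $\abs{W(V_1,\gamma)}\le m=\abs{W(V,\gamma)}$.

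The main obstacle is the bookkeeping in the middle step: making precise that $\iota_*$ sends the \emph{chosen} generators $g_w$ for $V$ to the \emph{chosen} generators for $V_1$ (up to the trivial ones), rather than to some other conjugate or word in them. This rests entirely on the fact that the generators and $\gamma_{str}$ are constructed by the same recipe — go left along a horizontal ray passing all obstructing punctures from above — so deleting punctures is compatible with the recipe; once that compatibility is spelled out, the inequality is immediate. Everything else is the routine ``forgetting a puncture is $\pi_1$-surjective and kills the forgotten generator'' observation.
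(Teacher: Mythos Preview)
Your proof is correct and follows essentially the same approach as the paper: fill in the punctures of $V\setminus V_1$, observe that the corresponding generators become trivial under the induced map on $\pi_1$, and conclude that deleting letters (followed by free reduction) can only shorten the word. The paper's own proof is a two-line sketch of exactly this argument; you have simply spelled out the details, including the compatibility of the preferred generators and of $\gamma_{str}$ under the inclusion.
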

\begin{proof}
	We can first find $W(V,\gamma)$ and then ``glue in'' the punctures from $V\setminus V_1$. Then some generators of the fundamental group in $H_r(V)$ become trivial. So the length of $W(V_1,\gamma)$ cannot be bigger than the length of $W(V,\gamma)$. 	
\end{proof}	

\begin{lmm}
	Let $\gamma\subset \mathbb{C}\setminus V$ be an r-curve with the strictly increasing real part and such that $\gamma(0)\in V$. Then $\abs{W(V,\gamma)}<C_{\abs{V}}$ where $C_{\abs{V}}>0$ depends only on the size of $V$. 
\end{lmm}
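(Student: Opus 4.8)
The plan is to show that an r-curve $\gamma$ with strictly increasing real part cannot ``wind'' around the punctures of $V$ in a complicated way, so the word $W(V,\gamma)$ it produces has bounded length. The starting observation is that if $\Re\gamma$ is strictly increasing, then $\gamma$ is a graph over the $x$-axis: it meets each vertical line $\{\Re z = x\}$ at most once. In particular $\gamma$ is ambient-isotopic in $H_r$ (rel its endpoints at $\gamma(0)\in V$ and $\infty$) to the straight horizontal segment $\langle \gamma(0)+t,\ t\geq 0\rangle$, \emph{pushed slightly up or down past each puncture it would otherwise hit}. The only freedom in this isotopy is, for each puncture $v\in V$ lying on that horizontal ray (there are at most $\abs{V}-1$ of them), the choice of passing above or below.

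The next step is to translate this into a bound on $\abs{W(V,\gamma)}$. By the construction of $W$ preceding the statement, $[\gamma_{loop}]=g_{v_1}\cdots g_{v_m}$ is read off by comparing $\gamma$ to the reference curve $\gamma_{str}$, which passes \emph{all} punctures from above. Since $\gamma$ itself is, up to isotopy, a monotone horizontal curve that passes each puncture either from above (agreeing with $\gamma_{str}$, contributing nothing) or from below (differing from $\gamma_{str}$ by a single small loop around that puncture, contributing one generator $g_v^{\pm 1}$), we get $[\gamma_{loop}]=g_{w_1}^{\pm 1}\cdots g_{w_\ell}^{\pm 1}$ where $w_1,\dots,w_\ell$ are distinct punctures, namely those lying on the horizontal ray from $\gamma(0)$ that $\gamma$ passes from below. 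Hence $\ell\leq\abs{V}-1$, and since the $w_i$ are distinct the reduced word has length at most $\abs{V}-1$. So one may take $C_{\abs{V}}=\abs{V}$.

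More carefully, I would argue the isotopy as follows. Reparametrize so that $\gamma$ is parametrized by its real part: $\gamma(t)=\gamma(0)+ (t, h(t))$ for a continuous $h$ with $h(0)=0$, defined for $t\in[0,\infty)$, with $\gamma(0)+t\to+\infty$. For each puncture $v$ on the horizontal ray, say $v=\gamma(0)+(t_v,0)$, we have $h(t_v)\neq 0$; the sign of $h(t_v)$ decides whether $\gamma$ passes $v$ above or below. A straight-line homotopy $\gamma_s(t)=\gamma(0)+(t,(1-s)h(t))$ would cross punctures, but one can first perturb $h$ near each $t_v$ so that $h$ is bounded away from $0$ in a neighborhood of $t_v$, and then contract $h$ to the constant $0$ outside these finitely many neighborhoods while keeping $|h|$ fixed (up to scaling) inside them; near $\infty$, since $\Re\gamma_s(t)\to+\infty$ and the punctures are finite in number, there are no punctures for large $t$ so the homotopy is unobstructed there. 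This realizes $\gamma$, rel endpoints in $H_r$, as $\gamma_{str}$ with finitely many of the ``above'' detours replaced by ``below'' detours, which is exactly the description used to compute $W$.

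The main obstacle, and the step to be most careful about, is making the ambient isotopy \emph{admissible in $H_r$}: it must avoid all punctures of $V$ throughout, and it must respect the pinched-endpoint homotopy at $\gamma(0)$ and at $\infty$. The monotonicity of $\Re\gamma$ is precisely what keeps the perturbation-then-contraction of $h$ from ever sweeping $\gamma$ back across a puncture it has already passed — without monotonicity $\gamma$ could oscillate and the word could be arbitrarily long, so this hypothesis is essential and must be used exactly here. Once the isotopy is in hand, everything else is bookkeeping with the preferred generators $g_w$ and invoking Lemma~\ref{lmm:homotopy_in_rectangle} if one wishes to localize to the strip containing $\gamma$.
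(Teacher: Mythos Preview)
The paper's own proof is a single line (``Follows from the construction of $W(V,\gamma)$''), so you have gone considerably further in making the argument explicit, and your core observation is right: since $\Re\gamma$ is strictly increasing, $\gamma$ is a graph over the real axis, and its homotopy class in $H_r$ is determined by the finite data of which side (above/below) it passes each puncture $v$ with $\Re v>\Re\gamma(0)$. That alone gives only finitely many possible homotopy classes, and since the resulting word depends only on the combinatorial configuration of the punctures, some bound $C_{|V|}$ follows.

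However, the sharper claim $C_{|V|}=|V|$ is incorrect, and the gap is in the step where you assert that each disagreement between $\gamma$ and $\gamma_{str}$ contributes a single letter $g_v^{\pm1}$. Each preferred generator $g_v$ comes with its own approach path from $\infty$ at height $\Im v$, and this path need not be homotopic to the tail of $\gamma_{str}$; the discrepancy introduces conjugation by words in the \emph{other} generators. Concretely, take $V=\{0,\,1+2i,\,2+i,\,3+2i\}$, $\gamma(0)=0$, and let $\gamma$ be monotone with $\Im\gamma(1)>2$, $\Im\gamma(2)<1$, $\Im\gamma(3)>2$. No puncture lies on the horizontal ray from $0$, so your paragraph-3 homotopy predicts $W$ trivial; but in fact one computes
\[
[\gamma_{loop}]\;=\;g_{2+i}^{-1}\,g_{3+2i}^{-1}\,g_{1+2i}^{-1}\,g_{3+2i}\,g_{2+i}\,g_{3+2i}^{-1},
\]
a reduced word of length $6>|V|-1=3$. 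The conjugating factor $g_{2+i}^{-1}g_{3+2i}^{-1}$ arises precisely because the approach path for $g_{1+2i}$ (at height $2$, detouring above $3+2i$) passes on the opposite side of $2+i$ and of $3+2i$ from the real axis $\gamma_{str}$. Your third paragraph only handles punctures exactly on the horizontal ray from $\gamma(0)$ and misses this phenomenon entirely. The argument can be salvaged to yield \emph{some} bound in $|V|$ --- e.g.\ homotope $\gamma$ to a piecewise-linear monotone curve with at most $|V|$ straight pieces and then count intersections with the rays $L_w$ as in the subsequent Lemma~\ref{lmm:bounding_|W|} --- but not the linear bound you claim.
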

\begin{proof}
	Follows from the construction of $W(V,\gamma)$.
\end{proof}

\subsection{$H_r$ and Hawaiian earrings}

This is a minor subsection containing a rather standing aside fact that we will need later.

Let $H_r$ be defined as in Subsection~\ref{subsec:r_curves} for a countably infinite $V$. Clearly we can choose such representatives $g_{v_n}'$ of the ``straight'' generators $g_{v_n}$ (defined similarly as for finite $V$) so that each of them is a Jordan curve containing $\infty$, they are pairwise disjoint except at $\infty$, and each $g_{v_n}'$ is a concatenation $\alpha_n^{-1}\cdot\beta_n$, where $\alpha_n,\beta_n$ are horizontal paths with the strictly increasing real part, and parametrized by it.

Recall that the \emph{Hawaiian earring} is the topological space homeomorphic to $\mathcal{H}=\cup C_n$, where $C_n=\{z\in\mathbb{C}: \abs{z-1/n}=1/n\}$ for $n\in\mathbb{N}$.

\begin{lmm}[Hawaiian earring skeleton]
	\label{lmm:hawaiian_earring_skeleton}
	Let $H:=\cup g_{v_n}'$. Then $H$ is homeomorphic to $\mathcal{H}$ and is a deformation retract of $H_r$.
\end{lmm}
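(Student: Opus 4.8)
The plan is to verify the two assertions separately: first that $H := \bigcup_n g_{v_n}'$ is homeomorphic to the Hawaiian earring $\mathcal H$, and then that $H$ is a deformation retract of $H_r$. For the homeomorphism, I would use the explicit description of the ``straight'' generators just recalled: each $g_{v_n}'$ is a Jordan curve through $\infty$, the curves are pairwise disjoint away from $\infty$, and $g_{v_n}' = \alpha_n^{-1}\cdot\beta_n$ with $\alpha_n,\beta_n$ horizontal-type paths having strictly increasing real part. The key geometric input is that, since $\Re v_n \to +\infty$, the loops $g_{v_n}'$ can be chosen so that $g_{v_n}'\to\infty$ in $H_r$ (the whole loop eventually lies in any prescribed neighborhood of $\infty$); equivalently, in a local coordinate $w = 1/(z - r)$ (or some Möbius chart sending $\infty$ to $0$) the images of $g_{v_n}'$ are Jordan curves through $0$ shrinking to $0$. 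One then builds a homeomorphism $H\to\mathcal H$ loop-by-loop: map $g_{v_n}'$ homeomorphically onto $C_n$ sending $\infty\mapsto 0$, choosing the parametrizations compatibly; continuity of the resulting bijection at $\infty$ (the only non-obvious point) follows because both sides are nested-shrinking unions of arcs at the basepoint, so a neighborhood basis of $\infty$ in $H$ is matched with a neighborhood basis of $0$ in $\mathcal H$. The inverse is continuous by the same token, or by a compactness argument after one-point-compactifying appropriately.

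For the deformation retraction, the idea is to exhibit $H_r$ as an ``infinite wedge of planar pieces glued at $\infty$'' matching the combinatorics of $H$. Concretely, the Jordan curves $g_{v_n}'$ are pairwise disjoint except at $\infty$, so they cut $\mathbb H_r\cup\{\infty\}$ into regions; I would arrange the $g_{v_n}'$ (as we may, by the freedom in their choice) so that $v_n$ is the unique puncture enclosed by $g_{v_n}'$, and so that the ``interior'' bounded complementary component $\Omega_n$ of $g_{v_n}'$ contains $v_n$, with the $\Omega_n$ pairwise disjoint. The complement $H_r \setminus \bigl(\bigcup_n \overline{\Omega_n}\bigr)$ is then a simply connected ``outer'' region whose closure in $H_r$ meets $H$ along all the loops; this outer region deformation retracts onto $\bigcup_n g_{v_n}'$ by a standard planar isotopy (push everything toward the boundary loops), and each punctured disk $\Omega_n\setminus\{v_n\}$ deformation retracts onto its boundary $g_{v_n}'$ (an annulus-type retraction). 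The two families of retractions agree on the overlap (the loops $g_{v_n}'$), so they patch to a deformation $r_t : H_r\to H_r$ with $r_0 = \mathrm{id}$, $r_1(H_r) = H$, and $r_t|_H = \mathrm{id}$. The one subtlety is to check that $r_t$ is continuous at $\infty$: because the loops and regions $\Omega_n$ recede to $\infty$ (as $\Re v_n\to+\infty$), for any neighborhood $U$ of $\infty$ all but finitely many $\Omega_n$ and the corresponding loop pieces lie in $U$, so the retraction moves points near $\infty$ only within $U$; the finitely many remaining pieces are handled individually, and one can build $r_t$ to fix a neighborhood of $\infty$ pointwise, which makes the continuity at $\infty$ immediate.

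The main obstacle, in both parts, is the behavior at the non-manifold point $\infty$: the Hawaiian earring is not semilocally simply connected there, and the infinitely many loops accumulate, so one cannot naively invoke finite-graph or CW arguments. The resolution in both cases is the same geometric fact — that $\Re v_n\to+\infty$ forces the loops $g_{v_n}'$ (and the enclosed regions $\Omega_n$) to converge to $\infty$ in $H_r$ — which gives matching shrinking neighborhood bases at $\infty$ and lets one assemble the homeomorphism and the retraction as locally finite patchworks that are literally the identity near $\infty$. Everything else is a routine planar-topology construction (Jordan curve theorem, annulus and disk retractions, gluing along common boundaries). I would therefore organize the write-up as: (i) choose the $g_{v_n}'$ with the recession and disjointness-of-interiors properties; (ii) construct the homeomorphism $H\cong\mathcal H$; (iii) construct $r_t$ on the outer region and on each $\Omega_n$ and glue; (iv) check continuity at $\infty$ using the recession property.
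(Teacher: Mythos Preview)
Your proposal is correct and follows essentially the same approach as the paper. For the homeomorphism $H\cong\mathcal H$ the paper does exactly what you do---map $g_{v_n}'$ to $C_n$ with $\infty\mapsto 0$---and then disposes of continuity of the inverse in one stroke by observing that $H$ is compact and $\mathcal H$ Hausdorff (the ``compactness argument'' you mention as an alternative). For the deformation retraction the paper gives only a one-line hint (``the punctured closed unit disk deformation retracts onto its outer boundary''), which is precisely your annulus retraction of each $\Omega_n\setminus\{v_n\}$ onto $g_{v_n}'$; your write-up is simply a more explicit unpacking of that sentence, including the treatment of the outer strips and the continuity check at $\infty$, none of which the paper spells out.
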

\begin{proof}[Sketch of the proof]
	We can construct the homeomorphism $\psi:H\to\mathcal{H}$ by mapping homeomorphically the consecutive generators $g_{v_n}'$ to the consecutive outer circles $C_n$ so that $\psi(g_{v_n}')=C_n$, $\psi(\infty)=0$, and $\psi(\alpha_n(t)),\psi(\beta_n(t))\to 0$ as $t\to\infty$ uniformly in $n$ whenever defined.
	
	Such $\psi$ is a continuous bijection. Since $H$ is compact and $\mathcal{H}$ is Hausdorff, $\psi$ is homeomorphism.
	
	The second statement of the lemma follows easily from the fact that the punctured closed unit disk deformation retracts onto its outer boundary.
\end{proof}	

\subsection{Homotopy type of a preimage}

We are now interested in how the homotopy type, and in particular $\abs{W(V,\gamma)}$, changes after taking preimage under an entire map. Although to prove Classification Theorem~\ref{thm:main_thm} we only need to consider the exponential function, we study (without essential additional efforts) the case of compositions of a polynomial with the exponential, which will be in our focus in the subsequent articles devoted to more general families. We begin with two preliminary lemmas and then provide bounds for $\abs{W(V,\tilde{\gamma})}$ of a preimage curve $\tilde{\gamma}$.

Let $V\subset \mathbb{C}$ be a \emph{finite} set, and let $\gamma$ be an r-curve such that $\gamma(0)\in V$ and $\gamma|_{\mathbb{R}^+}\subset\mathbb{C}\setminus V$. As earlier choose $H_r=H_r(V,\gamma)$. We again consider homotopy types of $\gamma$ in $H_r$.

\begin{lmm}[Piecewise linear representative]
	\label{lmm:piecewise_linear_representative}
	There exists an r-curve $\gamma_{pl}\in [\gamma]$ consisting of at most $6\abs{W(V,\gamma)}+2$ straight line segments.	
\end{lmm}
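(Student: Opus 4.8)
The goal is to produce a piecewise-linear r-curve homotopic to $\gamma$ in $H_r$ whose number of straight segments is controlled linearly by $\abs{W(V,\gamma)}$. The plan is to build the representative directly from the canonical decomposition $[\gamma] = [\gamma_{str}]\cdot[\gamma_{loop}]$ with $[\gamma_{loop}] = g_{v_1}g_{v_2}\cdots g_{v_m}$, where $m = \abs{W(V,\gamma)}$. Since every generator $g_{w}$ already has an essentially piecewise-linear representative (a horizontal segment from $\infty$ approaching $w$ from above, passing intervening punctures along small upper half-circles, then a loop around $w$), and since concatenating these generators plus the ``straight'' part $\gamma_{str}$ gives a curve homotopic to $\gamma$, the only real work is (a) arranging the concatenation so it is a genuine curve (not just a homotopy word), (b) replacing the small half-circular detours around punctures by straight segments, and (c) bookkeeping the segment count.

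First I would fix a horizontal level, say the line $\Im z = y_0$ where $y_0$ is chosen strictly above $\max_{v\in V}\Im v$ and above $\sup_t \Im\gamma(t)$ on the relevant compact piece, so that a curve running along $\Im z = y_0$ avoids all of $V$. The standard generator $g_w$ can be homotoped to the following piecewise-linear loop: start at a far-right point, run left along $\Im z = y_0$, drop down vertically just to the right of $w$, encircle $w$ by a small axis-parallel rectangle (four segments), come back up, and return right along $\Im z = y_0$. That is a bounded number of segments per generator; the ``pass from above'' convention is automatically realized because we travel along $\Im z = y_0$ which lies above every puncture. One then concatenates the PL-representatives of $g_{v_1},\dots,g_{v_m}$ in order — each consisting of a left-run, a constant-size loop gadget, and a right-run back to the base level near $\infty$ — and finally appends a PL version of $\gamma_{str}$, which is itself one horizontal ray with small detours replaced by straight segments; but since we run $\gamma_{str}$ along $\Im z = y_0$ as well, it needs no detours and is just one or two segments (a descent from $\gamma(0)$ to the level $\Im z=y_0$ region is unnecessary if we absorb it, or costs one extra segment).

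The segment count is then a straightforward tally: appending the left-run and right-run of consecutive generators along the common horizontal line $\Im z = y_0$ lets adjacent horizontal pieces merge, so each generator contributes its loop gadget (a fixed number of segments, which one checks is at most $4$ or so per loop with the two vertical ``spurs'' shared with neighbors) plus a constant overhead, and summing over the $m = \abs{W(V,\gamma)}$ generators plus the constant cost of $\gamma_{str}$ and the initial descent gives a bound of the form $6\abs{W(V,\gamma)} + 2$. The precise constants $6$ and $2$ come from being slightly careful about how the vertical spurs and the horizontal connectors are counted and shared; I would draw the gadget explicitly (one vertical down-segment, a bottom horizontal, a vertical up-segment on the far side, i.e. three segments to go around $w$ from the line $\Im z=y_0$, reusing the line for the top of the ``rectangle''), yielding roughly $3$ segments per loop plus the shared horizontal run, and pad to $6$ per generator to comfortably absorb the junction segments and the orientation bookkeeping, with the additive $2$ covering $\gamma_{str}$ and the endpoint approach.

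The main obstacle is not topological but combinatorial-geometric: verifying that when the individual PL loop gadgets for $g_{v_1},\dots,g_{v_m}$ are strung together along the single horizontal line $\Im z = y_0$, the resulting curve is (i) still embedded enough to be a legitimate r-curve (in particular injective near $\infty$ and with $\Re\gamma(t)\to+\infty$), and (ii) still in the class $[\gamma]$ in $H_r$ — this last point uses exactly the defining property that $[\gamma]$ is determined by $\gamma(0)$ together with $[\gamma_{loop}]\in\pi_1(H_r,\infty)$, so homotoping each generator to its PL form and concatenating cannot change the class. I expect the write-up to consist mostly of describing the gadget, invoking the decomposition $[\gamma]=[\gamma_{str}]\cdot[g_{v_1}]\cdots[g_{v_m}]$ established in Subsection~\ref{subsec:r_curves}, and then a one-line segment count; the delicate part is choosing the shared horizontal level and confirming the generators' ``from above'' convention is respected, after which Lemma~\ref{lmm:bar_paths_equivalence} and the explicit description of the generators $g_w$ do the rest.
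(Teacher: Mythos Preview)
Your approach is essentially the same as the paper's: use the decomposition $[\gamma]=[\gamma_{str}]\cdot g_{v_1}\cdots g_{v_m}$, realize each factor by an explicit piecewise-linear gadget, concatenate, and count segments. The only difference is cosmetic geometry. The paper gives each generator a $5$-segment representative (one straight ray in from $\infty$, a $3$-segment triangle around $w$, one ray back out), observes that the resulting concatenation has spurious midpoints at $\infty$, and then spends an extra $m$ segments replacing each adjacent pair of infinite rays by a finite $\Pi$-shaped connector; together with $2$ segments for $\gamma_{str}$ this yields exactly $5m+2+m=6m+2$. Your variant of routing everything along a fixed horizontal line $\Im z=y_0$ above $V$ avoids the $\infty$-midpoint issue from the outset and, if you count carefully, actually gives a slightly better constant; the trade-off is that your write-up leaves the precise tally (``pad to $6$ to comfortably absorb\dots'') informal, whereas the paper's count is exact.
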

\begin{proof}
	Let $m=\abs{W(V,\gamma)}$. Take a generator $[g_w]$ of the fundamental group ${\pi_1(H_r,\infty)}$ for some $w\in V$. There exists a representative of the generator $[g_w]$ consisting of $5$ straight line segments: $1$ segment to pass from $\infty$ to a small neighborhood of $w$, $3$ to make a triangular loop around $w$ in this small neighborhood, and $1$ to return back to $\infty$.
	
	Analogously $[\gamma_{str}]$ has an representative consisting of at most $2$ straight line segments.
	
	Using the presentation of $[\gamma]=[\gamma_{str}]\cdot[\gamma_{loop}]$  and a presentation of $[\gamma_{loop}]=g_{v_1}...g_{v_m}$ via $m$ generators, we already have a piece-wise linear representative of $[\gamma]$  containing at most $5m+2$ straight line segments. Nevertheless, this representative is not an r-curve because it has some midpoints at $\infty$: the endpoints of the generators $g_{v_i}$. But this flaw can be easily adjusted by the ``price'' of at most $m$ straight line segments: if $z_1,z_2\in\mathbb{C}$ are such that $\Re z_1=\Re z_2>\max_{v\in V} \Re v$, then the concatenation
	$$\langle z_1+t,t\in[0,\infty]\rangle\cdot \langle z_2+t,t\in[0,\infty]\rangle^{-1}$$
	of horizontal rays is homotopic to a straight line segment between $z_1$ and $z_2$. This way we replace pairs of neighboring infinite rays on the generators by $\Pi$-shaped parts.
	
	Altogether this means that $[\gamma]$ has an r-curve representative consisting of at most $5m+2+m=6m+2$ straight line segments.	
\end{proof}

\begin{lmm}[Intersections bound homotopy type]
	\label{lmm:bounding_|W|}
	For every point $w \in V$, consider the ray $L_w=\langle w-t, t>0\rangle$. Assume that $\gamma|_{(0,\infty)}$ intersects $L_w$ at $k_w<\infty$ points, and let $k=\sum_{V} k_w$. Then $$\abs{W(V,\gamma)}\leq (k+1)\abs{V}.$$  
\end{lmm}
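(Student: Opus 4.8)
The statement relates the combinatorial length $\abs{W(V,\gamma)}$ to the geometric count $k$ of intersections of $\gamma$ with the horizontal left-rays $L_w = \langle w-t, t>0\rangle$. The plan is to read off $W(V,\gamma)$ — which lives in $\pi_1(H_r,\infty)$ — by keeping track of which of these rays $\gamma$ crosses and in what order, since the generators $g_w$ are precisely built from loops that cross $L_w$. First I would set up a normal form for $\gamma$: since only homotopy type matters, homotope $\gamma$ so that it meets each $L_w$ transversally in finitely many points (still $k_w$ of them after removing cancelling pairs, so we only decrease the count) and so that between consecutive crossings $\gamma$ stays in the region to the right of all of $\bigcup_w L_w$, i.e.\ in a simply connected piece of $H_r$ where its behavior is homotopically trivial.

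**Key steps.** Second, I would record the "crossing word": reading $\gamma$ from $t=0$ to $t=\infty$, list the sequence of rays $L_{w}$ that are crossed, with a sign for the direction of crossing. Because the complement in $\mathbb{H}_r$ of $\bigcup_{w\in V} L_w$ together with the horizontal right-rays from each puncture is a union of simply connected regions, the homotopy class of $\gamma$ rel the pinched endpoints is determined by this signed crossing sequence, reduced in the free group. Each elementary crossing of $L_w$ contributes (at most) a single letter $w^{\pm 1}$ to a word representing $[\gamma_{loop}]$ in terms of the $g_w$'s — this is essentially the standard fact that the $g_w$ are a "dual basis" to the rays $L_w$. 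Hence the unreduced word has length at most $k$; adding the contribution of the base arc $\gamma_{str}$ (which, after the normalization, crosses the $L_w$ at most $\abs{V}$-ish times) gives a bound of the shape $k + (\text{something} \le \abs{V})$. The factor $\abs{V}$ in the claimed bound $(k+1)\abs{V}$ is generous, so I would not fight for optimality: even a crude accounting — each crossing of $L_w$ can be "charged" with a full trip $g_w^{\pm1}$, and the straight part $\gamma_{str}$ we chose passes above at most $\abs{V}$ punctures — comfortably fits inside $(k+1)\abs{V}$.

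**Alternative (cleaner) route.** Rather than signed crossing sequences, I might instead invoke Lemma~\ref{lmm:homotopy_in_rectangle} / the construction of $W$ directly: choose the disks $D_v$ defining the generators $g_w$ to have radius smaller than the distance from any puncture to $\gamma$, so that $\gamma$ meets $\bigcup D_v$ in a controlled way; then cut $\gamma$ at its crossings with the $L_w$ into at most $k+1$ subarcs, each lying in a region where it is homotopic to a concatenation of at most $\abs{V}$ "straight" generator-pieces (one can go left past at most $\abs{V}-1$ punctures before reaching any given $w$). Concatenating, $[\gamma_{loop}]$ is a product of at most $(k+1)\abs{V}$ generators $g_w^{\pm 1}$, whence $\abs{W(V,\gamma)} \le (k+1)\abs{V}$ after possibly reducing.

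**Main obstacle.** The delicate point is justifying that the homotopy class is genuinely recovered from the finite crossing data — i.e.\ that nothing homotopically nontrivial happens "between" crossings of the $L_w$. This requires identifying the right cell decomposition of $H_r$: the rays $L_w$ together with the horizontal right-rays from the punctures (used to define $\gamma_{str}$ and the generators) cut $\mathbb{H}_r \setminus V$ into simply connected pieces, and one must check a subarc of $\gamma$ with endpoints on the boundary of such a piece is determined rel endpoints. Handling the pinched endpoint at $\gamma(0)\in V$ and the behavior near $\infty$ needs the same care as in Lemma~\ref{lmm:bar_paths_equivalence}, but those are exactly the tools already available. Everything else is bookkeeping, and since the target bound has slack, I would keep the bookkeeping deliberately lossy.
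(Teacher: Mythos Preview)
Your ``alternative (cleaner) route'' is essentially the paper's proof. The paper cuts $\gamma$ at parameter values $s_0,\dots,s_m$ chosen \emph{between} the $m\le k$ crossing times, attaches to each cut point the right-going horizontal ray $\delta_{s_i}=\langle \gamma(s_i)+t,\,t\in[0,\infty]\rangle$, and writes $[\gamma]$ as the product $[\gamma_0\cdot\delta_{s_0}]\cdot[\delta_{s_0}^{-1}\cdot\gamma_1\cdot\delta_{s_1}]\cdots[\delta_{s_m}^{-1}\cdot\gamma_{m+1}]$. Each middle factor crosses the family $\{L_w\}$ exactly once and is therefore a product of distinct generators $g_v$ (so at most $\abs{V}$ of them); the initial factor is $[\gamma_{str}]$ times at most $\abs{V}$ generators; the final factor is trivial. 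This is precisely your ``cut into $k+1$ subarcs, each worth at most $\abs{V}$ letters'' count, with the rays $\delta_{s_i}$ supplying the explicit basepoint-at-$\infty$ closure you were gesturing at.

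Your primary plan---reading off a signed crossing word in which each crossing of $L_w$ contributes a single letter $g_w^{\pm 1}$---is a genuinely different and in principle sharper argument: the $L_w$ (extended to $\partial\mathbb{H}_r$) are disjoint cuts from the punctures to the boundary, so cutting along them makes $H_r$ simply connected and the crossing word computes $[\gamma_{loop}]$ on the nose, giving $\abs{W(V,\gamma)}\le k+O(\abs{V})$ rather than $(k+1)\abs{V}$. The paper does not pursue this because the weaker bound is all that is needed downstream, and the ``attach a right ray and bound each piece by $\abs{V}$'' argument avoids having to verify transversality or worry about coincident imaginary parts among the $L_w$. Either route is fine here; the paper's is slightly more robust, yours slightly more informative.
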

\begin{proof}
	Let $0<t_1<\dots<t_m<\infty$ be the values of $t$ for which $\gamma|_{(0,\infty)}$ intersects at least one of the $L_w$. Then $m\leq k $. It can also happen that $m<k$ since $\gamma$ might intersect a few $L_u$ at the same time.
	
	Choose $s_0 \in (0,t_1),\, s_1\in (t_1,t_2),\dots,\, s_{m-1}\in (t_{m-1},t_m),\, s_m\in (t_m,\infty)$, and define\\	
	$\gamma_0=\gamma|_{[0,s_0]},\\ \gamma_1=\gamma|_{[s_0,s_1]},\\...\\
	\gamma_m=\gamma|_{[s_{m-1},s_m]},\\\gamma_{m+1}=\gamma|_{[s_m,\infty]}$,\\	
	so we have $\gamma=\gamma_0\cdot\gamma_1\cdot...\cdot\gamma_{m+1}$.
	
	For $z\in\mathbb{C}$ denote by $\delta_z$ the infinite ray $\langle z+t,t\in [0,\infty]\rangle$. Then
	$$[\gamma]= [\gamma_0\cdot\delta_{s_0}]\cdot[\delta_{s_0}^{-1}\cdot\gamma_1\cdot\delta_{s_1}]\cdot ...\cdot[\delta_{s_{m-1}}^{-1}\cdot\gamma_m\cdot\delta_{s_m}]\cdot[\delta_{s_m}\cdot\gamma_{m+1}].$$
	
	Each $[\delta_{s_{i-1}}^{-1}\cdot\gamma_i\cdot\delta_{s_i}]$ for $i=\overline{1,m}$ is homotopic either to a consecutive product (possibly with inverses) of a few generators $g_v$ without repetitions, or it is null-homotopic.
	
	Further, $[\gamma_0\cdot\delta_{s_0}]$ is homotopic either to $\gamma_{str}$, or to a concatenation with a consecutive product (possibly with inverses) of generators $g_v$ without repetitions, whereas $[\delta_{s_m}\cdot\gamma_{m+1}]$ is evidently null-homotopic.
	
	Hence 
	$$\abs{W(V,\gamma)}\leq m\abs{V}+\abs{V}\leq (k+1)\abs{V}.$$

\end{proof}

Now we are ready to describe how $\abs{W(V,\gamma)}$ behaves after taking preimages under the exponential or a polynomial. The estimates are justified by the following elementary lemma.

\begin{lmm}[Preimages stay horizontal]
	Let $f$ be either the exponential or a monic polynomial, $\gamma:(0,\infty)\to\hat{\mathbb{C}}\setminus\{\text{singular values of }f\}$ be an r-curve, and $\tilde{\gamma}$ be one of its lifts under $f$. The following statements hold.
	\begin{enumerate}
		\item If $f$ is the exponential, then $\tilde{\gamma}$ is an r-curve.
		\item If $f$ is a monic polynomial of degree $d\geq 1$, then there exists a degree $d$ root of unity $\xi$ such that $\xi \tilde{\gamma}$ is an r-curve.
	\end{enumerate}
\end{lmm}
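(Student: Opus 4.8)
The plan is to analyze the two cases separately, exploiting in each case that the map is essentially ``vertically periodic'' (for $\exp$) or ``asymptotically a $d$-th power'' (for a monic polynomial), so that the condition $\Re\gamma(t)\to+\infty$ transfers to the lift. Recall that an r-curve $\tilde\gamma$ must satisfy three conditions: $\tilde\gamma[0,\infty)\subset\mathbb C$, $\tilde\gamma(\infty)=\infty$, and $\Re\tilde\gamma(t)\to+\infty$. The first is immediate since a lift of a curve avoiding the singular values stays in $\mathbb C$; the second follows because $f(\tilde\gamma(t))=\gamma(t)\to\infty$ and the only point over $\infty$ for either $\exp$ (as a self-map fixing $\infty$ in the appropriate sense) or a polynomial is $\infty$ itself, so continuity of the lift at the endpoint forces $\tilde\gamma(\infty)=\infty$. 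Hence in both cases the real work is to control $\Re\tilde\gamma(t)$.

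For the exponential case, I would argue as follows. Since $\gamma$ is an r-curve, there is $T>0$ with $\Re\gamma(t)>1$ for $t>T$; in particular $\abs{\gamma(t)}\to\infty$ and $\gamma(t)$ eventually lies in the right half-plane, so $\log\abs{\gamma(t)}\to+\infty$. Any lift $\tilde\gamma$ of $\gamma$ under $\exp$ satisfies $e^{\tilde\gamma(t)}=\gamma(t)$, hence $\Re\tilde\gamma(t)=\log\abs{\gamma(t)}\to+\infty$. This is exactly condition (3), so $\tilde\gamma$ is an r-curve, with no root-of-unity correction needed.

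For the monic polynomial case of degree $d$, I would use the standard fact that near $\infty$ a monic polynomial $f(z)=z^d+\dots$ behaves like $z^d$: there is $r_0>0$ and a branch of the $d$-th root so that for $\abs w$ large, the preimages of $w$ are $w^{1/d}\zeta_j(1+o(1))$ where $\zeta_j$ ranges over the $d$-th roots of unity. Concretely, $f(\tilde\gamma(t))=\gamma(t)$ gives $\abs{\tilde\gamma(t)}=\abs{\gamma(t)}^{1/d}(1+o(1))\to\infty$ and, more importantly, the argument $\arg\tilde\gamma(t)$ converges to $\tfrac1d\arg\gamma(t)+\tfrac{2\pi k}{d}$ modulo the $o(1)$ error, for some fixed integer $k$ (fixed because $\tilde\gamma$ is a connected lift and cannot jump between the $d$ sheets). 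Since $\gamma$ is an r-curve, $\arg\gamma(t)\to 0$, so $\arg\tilde\gamma(t)\to \tfrac{2\pi k}{d}$; multiplying $\tilde\gamma$ by $\xi:=e^{-2\pi i k/d}$, a $d$-th root of unity, we get $\arg(\xi\tilde\gamma(t))\to 0$ while $\abs{\xi\tilde\gamma(t)}=\abs{\tilde\gamma(t)}\to\infty$, hence $\Re(\xi\tilde\gamma(t))\to+\infty$. Together with the first two conditions this shows $\xi\tilde\gamma$ is an r-curve.

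The main obstacle is making the asymptotic ``$f(z)\sim z^d$ near $\infty$'' rigorous enough to pin down a single root-of-unity sector: one must verify that the continuous lift $\tilde\gamma$ cannot drift between the $d$ local inverse branches of $f$ as $t\to\infty$. This follows because once $\abs{\gamma(t)}$ exceeds the radius $r_0$ beyond which the $d$ inverse branches of $f$ are well-defined and disjoint, the tail of $\tilde\gamma$ lies in a single one of these branches (by connectedness and continuity), so the integer $k$ is genuinely constant; the remaining estimates on argument and modulus are then routine from $f(z)=z^d(1+O(1/z))$.
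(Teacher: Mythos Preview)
The paper states this lemma without proof, calling it ``elementary'', so there is no paper argument to compare against. Your exponential case is correct: from $e^{\tilde\gamma(t)}=\gamma(t)$ one gets $\Re\tilde\gamma(t)=\log\abs{\gamma(t)}\geq\log\Re\gamma(t)\to+\infty$, and this already forces $\tilde\gamma(t)\to\infty$ (so your separate justification of condition~(2) for $\exp$ via ``the only point over $\infty$'' is unnecessary and in fact a bit shaky, since $\exp$ has an essential singularity there).

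There is, however, a genuine gap in the polynomial case. You assert that ``since $\gamma$ is an r-curve, $\arg\gamma(t)\to 0$'', and then deduce $\arg\tilde\gamma(t)\to 2\pi k/d$. The first claim is false: the definition of an r-curve only requires $\Re\gamma(t)\to+\infty$, which allows, for instance, $\gamma(t)=t+it^2\sin t$, whose argument oscillates and accumulates on both $\pm\pi/2$. So $\arg\gamma(t)$ need not converge at all.

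The fix is easy and your overall structure survives. What you actually get from $\Re\gamma(t)\to+\infty$ is that eventually $\arg\gamma(t)\in(-\pi/2,\pi/2)$. Once $\abs{\gamma(t)}$ is large enough that the $d$ local inverse branches of $f$ are separated (your last paragraph handles this correctly), the continuous lift $\tilde\gamma$ follows one fixed branch, and your asymptotic $\arg\tilde\gamma(t)=\tfrac{1}{d}\arg\gamma(t)+\tfrac{2\pi k}{d}+o(1)$ gives that $\arg(\xi\tilde\gamma(t))$ is eventually contained in any neighbourhood of $[-\pi/(2d),\pi/(2d)]$, where $\xi=e^{-2\pi i k/d}$. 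On such an interval $\cos$ is bounded below by a positive constant, and since $\abs{\tilde\gamma(t)}\sim\abs{\gamma(t)}^{1/d}\to\infty$, you conclude $\Re(\xi\tilde\gamma(t))\to+\infty$ as required. Replace the incorrect convergence claim by this sector argument and the proof goes through.
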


Thus, it makes sense to speak about how the homotopy type of an r-curve changes under lifts.

\begin{prp}[Bounds for lifts by polynomials]
	\label{prp:lift_through_polynomial}
	Let $p$ be a monic polynomial of degree $d\geq 1$ with its critical values contained in $V$, and let the set $\tilde{V}$ be such that $p(\tilde{V})\subset V$ and $p|_{\tilde{V}}$ is bijective. Denote by $\tilde{\gamma}$ the homeomorphic r-curve preimage of $\gamma$ under $p$.
	
	If $n=\abs{V}$, and $m=\abs{W(V,\gamma)}$, then $$\abs{W(\tilde{V},\tilde{\gamma})}<6dn^2(m+1).$$		
\end{prp}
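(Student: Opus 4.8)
The strategy is to bound the number of times the lifted curve $\tilde\gamma$ crosses the left-rays $L_{\tilde w}=\langle \tilde w - t,\ t>0\rangle$ for $\tilde w\in\tilde V$, and then invoke Lemma~\ref{lmm:bounding_|W|} which turns such an intersection count into a bound on $\abs{W(\tilde V,\tilde\gamma)}$. First I would replace $\gamma$ by a piecewise-linear representative: by Lemma~\ref{lmm:piecewise_linear_representative} there is $\gamma_{pl}\in[\gamma]$ consisting of at most $6m+2$ straight line segments, and the homotopy type of the lift only depends on the homotopy type of $\gamma$ (rel the appropriate $H_r$), so it suffices to estimate $\abs{W(\tilde V,\tilde\gamma_{pl})}$ where $\tilde\gamma_{pl}$ is the homeomorphic lift of $\gamma_{pl}$. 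Because $p$ is a polynomial map and $\gamma_{pl}$ is a union of line segments, $\tilde\gamma_{pl}$ is a union of at most $6m+2$ analytic arcs (one lift of each segment).

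The key step is a crossing count. Fix $\tilde w\in\tilde V$ and one analytic arc $A$ of $\tilde\gamma_{pl}$ lifting a segment $\ell$ of $\gamma_{pl}$. I want to bound the number of solutions of $p(z)\in p(\tilde w)+\mathbb{R}^-$ with $z\in A$ — equivalently, where $A$ meets $L_{\tilde w}$. The idea is that $A$ meets $L_{\tilde w}$ only if $\ell$ meets the image curve $p(L_{\tilde w})$, which is a curve emanating from $p(\tilde w)=:w\in V$; more efficiently, $z\in L_{\tilde w}$ forces $\Im\,p(z)=\Im\,w$ and $\Re\,p(z)\le \Re\,w$, so the parameter $t$ along $\ell$ at which $A(t)\in L_{\tilde w}$ is a root of the real-analytic equation $\Im\,p(\ell(t))=\const$, and $\Im\,p(\ell(t))$ is the imaginary part of a polynomial of degree $d$ in the real parameter $t$, hence has at most $d$ zeros (or $2d-1$, being careful with real/imaginary parts of a degree-$d$ polynomial restricted to a line; the precise constant is immaterial as long as it is $O(d)$). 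Summing over the at most $6m+2$ arcs of $\tilde\gamma_{pl}$ and over the $\abs{\tilde V}=\abs{V}=n$ points $\tilde w$, and noting that each segment of $\ell$ can also be chopped so that distinct intersection parameters genuinely give distinct points, I get
\begin{equation*}
  k \;=\; \sum_{\tilde w\in\tilde V} k_{\tilde w} \;\le\; n\cdot (6m+2)\cdot O(d).
\end{equation*}
Then Lemma~\ref{lmm:bounding_|W|} gives $\abs{W(\tilde V,\tilde\gamma)}\le (k+1)\,\abs{\tilde V} = (k+1)n$, and plugging in the bound on $k$ yields $\abs{W(\tilde V,\tilde\gamma)} = O(dn^2 m)$, which after tracking the constants gives the stated $6dn^2(m+1)$.

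The main obstacle I expect is bookkeeping the constants so that they collapse exactly to $6dn^2(m+1)$, and being honest about the degree bound for how many times a polynomial of degree $d$ restricted to a line can hit a fixed horizontal level with the correct half-line constraint — one must make sure that counting "$\Im\,p(\ell(t))=c$" rather than genuine geometric crossings of $L_{\tilde w}$ does not over- or under-count, and that the endpoints of the PL segments (which sit at $\infty$ before the $\Pi$-shaped correction in Lemma~\ref{lmm:piecewise_linear_representative}, but are genuine segments afterward) are handled. A secondary subtlety is that $\tilde\gamma$ must first be rotated by a $d$-th root of unity to become an r-curve (by the "Preimages stay horizontal" lemma), but since $W$ and $\abs{W}$ are computed in $H_r$ for the rotated picture and the rotation is a conformal automorphism of $\mathbb{C}$ permuting horizontal structure appropriately, this is only a cosmetic adjustment and does not affect the count. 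Everything else — the reduction to PL representatives, the application of Lemma~\ref{lmm:bounding_|W|} — is routine.
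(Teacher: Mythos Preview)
Your overall strategy---replace $\gamma$ by the piecewise-linear representative from Lemma~\ref{lmm:piecewise_linear_representative}, lift, bound the number of intersections of the lift with the left-rays $L_{\tilde w}$, and then feed this into Lemma~\ref{lmm:bounding_|W|}---is exactly the paper's argument, and the final arithmetic $(nd(6m+2)+1)n<6dn^2(m+1)$ is identical.

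However, your ``key step'' is garbled in a way that matters. You write that $z\in L_{\tilde w}$ forces $\Im p(z)=\Im w$, and then look at roots of $\Im p(\ell(t))=\const$; neither statement is correct. Points of $L_{\tilde w}$ satisfy $\Im z=\Im\tilde w$, which says nothing about $p(z)$; and $\ell$ lives in the \emph{image} plane, so $p(\ell(t))$ is not the relevant quantity. The clean way (and the paper's way) is to go in the opposite direction: parametrize the horizontal line through $\tilde w$ in the \emph{domain} as $z(x)=x+i\Im\tilde w$; then $p(z(x))$ is a polynomial of degree $d$ in the real variable $x$, and the condition that $z(x)$ lie on the lifted arc is $p(z(x))\in\ell$, which is a single linear constraint on $p(z(x))$. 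Hence either there are at most $d$ solutions, or $p$ maps the whole horizontal line into the line supporting $\ell$---a degenerate case you remove by a small perturbation of $\gamma_{pl}$ within its homotopy class. This gives exactly $d$ (not $O(d)$ or $2d-1$) intersections per segment per $\tilde w$, which is what you need for the constant $6dn^2(m+1)$ to come out on the nose. The remark about rotating by a $d$-th root of unity is unnecessary here: the proposition already takes $\tilde\gamma$ to be the r-curve lift.
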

\begin{proof}
	
	Choose some $H_{r'}=H_{r'}(\tilde{V},\tilde{\gamma})$ so that $p(\mathbb{H}_{r'})\supset \mathbb{H}_r$. Then every homotopy of $\gamma$ in $H_r$ lifts to a homotopy of $\tilde{\gamma}$ in $H_{r'}$.	
	
	Next, let $\gamma_{pl}\in [\gamma]$ be the piecewise linear representative consisting of at most $6m+2$ straight line segments as in the Lemma~\ref{lmm:piecewise_linear_representative} and denote by $\delta$ its lift under $p$ which is r-curve. Then $\delta$ is homotopic to $\tilde{\gamma}$ in $H_{r'}$.
	
	Elementary computation shows that a preimage of a straight line segment of $\gamma_{pl}$ under polynomial either intersects each straight line containing an $L_w$ in no more that $d$ points, or is a subset of it. If the second case takes place for some preimages of bounded segments of $\gamma_{pl}$, we slightly deform $\gamma_{pl}$ (in its homotopy class) so that the first condition holds. Note that since $\delta$ is horizontal, if the preimage of the unique infinite straight segment of $\gamma_{pl}$ is a subset of a horizontal straight line, then it does not intersect any $L_w$. Hence without loss of generality we can assume that $\delta|_{(0,\infty)}$ interests each $L_w, w\in\tilde{V}$ (see Lemma~\ref{lmm:bounding_|W|}) in at most $d(6m+2)$ points.
	
	Then due to Lemma~\ref{lmm:bounding_|W|}
	$$\abs{W(\tilde{V},\tilde{\gamma})}=\abs{W(\tilde{V},\delta)}\leq(nd(6m+2)+1)n=$$
	$$6mn^2d+2n^2d+n<6mn^2d+6n^2d=6dn^2(m+1).$$
\end{proof}

We are interested in an analogous result for the exponential.

\begin{prp}[Bounds for lifts by the exponential]
	\label{prp:lift_through_exponential}
	Let $0\in V$,\,\\ $\gamma(0)\neq 0$ and let $\tilde{V}$ be such that $\exp(\tilde{V})\subset V$, and $\exp|_{\tilde{V}}$ is injective. Denote by $\tilde{\gamma}$ the homeomorphic r-curve preimage of $\gamma$ under the exponential.
	
	If $n=\abs{V}=\abs{\tilde{V}}+1$, and $m=\abs{W(V,\gamma)}$, then $$\abs{W(\tilde{V},\tilde{\gamma})}<6n^2(m+1).$$		
\end{prp}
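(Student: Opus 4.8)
The plan is to mimic the structure of the proof of Proposition~\ref{prp:lift_through_polynomial}, using the fact that near $\infty$ the exponential behaves like a branched covering of infinite degree whose lifts of a horizontal straight segment again meet each horizontal line $L_w$ (translate of $\langle w-t, t>0\rangle$) in a controlled number of points. First I would choose $H_{r'}=H_{r'}(\tilde V,\tilde\gamma)$ with $\exp(\mathbb{H}_{r'})\supset\mathbb{H}_r$, so that a homotopy of $\gamma$ in $H_r$ lifts to a homotopy of $\tilde\gamma$ in $H_{r'}$; concretely one takes $r'$ so that $e^{r'}<r$ forces the image half-plane to cover $\mathbb{H}_r$ (here I use $0\in V$ and $\gamma(0)\neq 0$ exactly as in the statement, so that $\tilde\gamma(0)\in\tilde V$ is well defined and $\exp|_{\tilde V}$ is injective). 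Then, invoking Lemma~\ref{lmm:piecewise_linear_representative}, replace $\gamma$ by a piecewise-linear representative $\gamma_{pl}\in[\gamma]$ consisting of at most $6m+2$ straight line segments, and let $\delta$ be the unique r-curve lift of $\gamma_{pl}$ under the exponential; by the previous paragraph $\delta\simeq\tilde\gamma$ in $H_{r'}$.

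The heart of the argument is to bound, for each $w\in\tilde V$, the number of intersections of $\delta|_{(0,\infty)}$ with $L_w=\langle w-t,\,t>0\rangle$. Here the key elementary fact is that for a straight line segment $\ell$ in the $w$-plane, its preimage $\exp^{-1}(\ell)$ is an analytic curve that, inside any horizontal strip of height $2\pi$, meets a given horizontal line in at most one point — more precisely, since $\exp$ maps horizontal lines to rays from the origin and vertical lines to circles, the preimage of a line segment is a graph of the form $x=g(y)$ over a bounded $y$-interval within each period strip, so it crosses each horizontal level at most once per strip. Because $\tilde V$ is finite and all of $\tilde\gamma$, $\tilde V$ sit in a bounded-height region (the relevant part of $\delta$ has $\Re\to+\infty$ and, being an r-curve lift of the finitely many PL segments, stays in finitely many period strips near the $L_w$'s), each of the $6m+2$ segments of $\gamma_{pl}$ contributes at most a bounded number of intersections with each $L_w$; the clean bound is that $\delta|_{(0,\infty)}$ meets each $L_w$, $w\in\tilde V$, in at most $6m+2$ points (one per lifted segment, after a harmless deformation to avoid the degenerate case where a lifted segment lies inside a horizontal line, exactly as handled in Proposition~\ref{prp:lift_through_polynomial}, noting that the lift of the infinite segment is horizontal and hence disjoint from all $L_w$).

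With $k=\sum_{w\in\tilde V}k_w\le |\tilde V|\,(6m+2)=(n-1)(6m+2)$, Lemma~\ref{lmm:bounding_|W|} applied on the surface $\mathbb{C}\setminus\tilde V$ (which has $|\tilde V|=n-1$ punctures) yields
$$\abs{W(\tilde V,\tilde\gamma)}=\abs{W(\tilde V,\delta)}\le (k+1)|\tilde V|\le \big((n-1)(6m+2)+1\big)(n-1)<6mn^2+6n^2=6n^2(m+1),$$
which is the claimed bound (the crude estimate $(n-1)(6m+2)+1\le n(6m+6)=6n(m+1)$ times $(n-1)<n$ suffices). The main obstacle I anticipate is the intersection count in the second paragraph: one must argue carefully that only boundedly many period strips of the exponential are relevant, i.e.\ that the lift $\delta$ — after the deformation removing horizontal-line lifts of bounded segments — stays within a region of bounded imaginary part near each $L_w$, so that the "at most one crossing per strip" fact translates into an absolute (in fact, $m$-linear) bound rather than an unbounded one. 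Everything else is a direct transcription of the polynomial case with the degree $d$ replaced by $1$ (each bounded PL segment contributes $\le 1$ relevant intersection with each $L_w$ per strip, and the strips in play are few), which is why the final constant is $6n^2$ rather than $6dn^2$.
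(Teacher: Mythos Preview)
Your proposal is correct and follows essentially the same approach as the paper, which simply declares the argument identical to the polynomial case with $d=1$: the lift under $\exp$ of a single straight segment of $\gamma_{pl}$ meets each horizontal line in at most one point (because the argument of a point moving along a line not through the origin is monotone), and Lemma~\ref{lmm:bounding_|W|} then gives the bound. Your anticipated obstacle about ``boundedly many period strips'' is a non-issue---$\delta$ is a \emph{single connected lift}, not the full preimage, and each lifted segment has imaginary part varying monotonically by less than~$\pi$; also, your side remark $e^{r'}<r$ fails literally since $0\in V$ forces $r<0$, but this is harmless: one chooses $r'$ after fixing the specific homotopy from $\gamma$ to $\gamma_{pl}$, which stays bounded away from~$0$.
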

\begin{remark}
	Note that in this case $\abs{V}=\abs{\tilde{V}}+1$ since asymptotic value of $\exp$ does not have a preimage.	
\end{remark}
\begin{proof}
	The proof is identical to the case of a polynomial in Proposition~\ref{prp:lift_through_polynomial}, except that a preimage of a straight line segment under the exponential either intersects $L_w$ in at most \emph{one} point (instead of $d$), or is a subset of it. That is why the formula for the exponential coincides with the formula for polynomials of degree $d=1$.	
\end{proof}

At the end of this subsection we provide bounds for how $W(V,\gamma)$ changes under lifts by compositions of a polynomial and the exponential.

\begin{thm}[Bounds for lifts by $p\circ\exp$]
	\label{thm:lift_through_composition}
	Let $p$ be a monic polynomial of degree $d$, and $g=p\circ\exp$.
	
	Further, let $V\subset\mathbb{C}$ be a finite set containing singular values of $g$ and $\gamma:(0,\infty)\to\hat{\mathbb{C}}$ be an r-curve such that $\gamma(0)\subset V\setminus\{\text{singular values of }g\}$ and $\gamma|_{\mathbb{R}^+}\subset\mathbb{C}\setminus V$. 
	
	If $\tilde{V}\subset\mathbb{C}$ is a finite set such that $g(\tilde{V})\subset V\setminus\{\text{singular values of }g\}$, and $g|_{\tilde{V}}$ is injective, then for every r-curve preimage $\tilde{\gamma}$ of $\gamma$ under $g$ holds
	$$\abs{W(\tilde{V},\tilde{\gamma})}<42dn^4(m+1)$$
	where $n=\abs{V}$ and $m=\abs{W(V,\gamma)}$.	
\end{thm}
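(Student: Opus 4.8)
The natural approach is to factor the lift under $g=p\circ\exp$ into a lift under $p$ followed by a lift under $\exp$, and to chain the bounds already established in Propositions~\ref{prp:lift_through_polynomial} and \ref{prp:lift_through_exponential}. Concretely, write $g=p\circ\exp$ and set $V'=\exp(\tilde{V})$, so that $p(V')\subset V$ and $p|_{V'}$ is injective (this follows from $g|_{\tilde{V}}$ being injective and $\exp$ being injective on $\tilde{V}$). We must however be careful: to apply Proposition~\ref{prp:lift_through_polynomial} we need the set over which we lift through $p$ to contain the critical values of $p$, and to apply Proposition~\ref{prp:lift_through_exponential} we need $0$ to belong to the intermediate set. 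So the first step is to enlarge $V$ to a set $V_1\supset V$ that also contains the finitely many critical values of $p$; since $\abs{W(V_1,\gamma)}\geq\abs{W(V,\gamma)}$ is the wrong direction, I instead bound $\abs{W(V_1,\gamma)}$ from above in terms of $m$ and $n$ using the lemmas of Subsection~\ref{subsec:r_curves} (adding $\deg p - 1 \le d-1$ critical values at worst multiplies the word length by a controlled factor — here one uses Lemma~\ref{lmm:bounding_|W|}, or more directly that each newly inserted puncture can at most double the relevant count). A clean way: put $V_1 = V\cup\{\text{critical values of }p\}\cup\{0\}$, so $n_1 := \abs{V_1}\leq n+d$, and note that $\abs{W(V_1,\gamma)}$ is bounded by a piecewise-linear-representative argument (Lemma~\ref{lmm:piecewise_linear_representative}) combined with Lemma~\ref{lmm:bounding_|W|}: the $6m+2$ segments of $\gamma_{pl}$ each meet each of the $n_1$ rays $L_w$ at most once, giving $\abs{W(V_1,\gamma)}\le (n_1(6m+2)+1)n_1 \le 7n_1^2(m+1)$.

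The second step is the middle lift. Let $\gamma' $ be the r-curve preimage of $\gamma$ under $p$ with $\gamma'(0)\in V'$, where $V'\subset\mathbb{C}$ is chosen with $p(V')=V_1\setminus\{\text{stuff not in image}\}$, $p|_{V'}$ bijective, and — crucially — $0\in V'$ (possible since $0$ is not a singular value of $g$, hence lifts, and we may include it). Then $\abs{V'}=\abs{V_1}=n_1$ (as $p$ is a bijection on $V'$ onto its image; if some points of $V_1$ have no preimage in our chosen sheet we simply take $\abs{V'}\le n_1$), and Proposition~\ref{prp:lift_through_polynomial} gives
$$\abs{W(V',\gamma')}<6dn_1^2\bigl(\abs{W(V_1,\gamma)}+1\bigr)\le 6dn_1^2\bigl(7n_1^2(m+1)+1\bigr)\le 42dn_1^4(m+1).$$
The third step is the outer lift under $\exp$: with $0\in V'$ and $\gamma'(0)\neq 0$ (arrange this, or observe $\gamma(0)$ is not a singular value so $\gamma'(0)$ is not the critical-orbit point hitting $0$ — a small argument is needed here to ensure $\gamma'(0)\ne 0$), Proposition~\ref{prp:lift_through_exponential} applied with the set $V'$ of size $n_1 = \abs{\tilde{V}}+1$ yields
$$\abs{W(\tilde{V},\tilde{\gamma})}<6n_1^2\bigl(\abs{W(V',\gamma')}+1\bigr)\le 6n_1^2\bigl(42dn_1^4(m+1)+1\bigr).$$
Finally one simplifies: $n_1\le n+d$, and for the stated clean bound $42dn^4(m+1)$ one must observe that the composition of the two estimates naively gives something like $252\,d\,n_1^6(m+1)$, which is \emph{worse} than claimed — so the real content is that the two lifts should not be bounded separately and re-chained with full generality, but rather that the \emph{union} $L_w$-intersection count behaves additively. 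The sharper route: take the piecewise-linear representative $\gamma_{pl}$ of $[\gamma]$ with $\le 6m+2$ segments directly in the target, pull the whole thing back under $g=p\circ\exp$ in one stroke, and bound intersections of each pulled-back arc with each ray $L_w$, $w\in\tilde{V}$: a single segment pulls back under $p$ to a curve meeting each polynomial-$L$-ray in $\le d$ points, and then under $\exp$ each of those $\le d$ sub-arcs meets each exponential-$L$-ray in $\le 1$ point, so the total is $\le d$ per segment per ray, i.e. $\le d(6m+2)$ intersections with each $L_w$. Then Lemma~\ref{lmm:bounding_|W|} with $\abs{\tilde V} = n-1 < n$ punctures gives $\abs{W(\tilde V,\tilde\gamma)} \le (\abs{\tilde V}\cdot d(6m+2)+1)\,\abs{\tilde V} < (n d (6m+2)+1)n < 7dn^2(6m+2) < 42dn^2(m+1)$.

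I expect the main obstacle to be exactly this bookkeeping: the estimate as stated ($42dn^4(m+1)$, note the $n^4$) is evidently built to absorb the enlargement of $V$ by the $O(d)$ critical values of $p$ before lifting, so the delicate point is getting the \emph{power of $n$} right while tracking how many punctures actually live on the intermediate and final surfaces, and ensuring the hypotheses (containment of critical values, $0\in V'$, injectivity of $g|_{\tilde V}$ descending to injectivity of $p|_{V'}$ and $\exp|_{\tilde V}$, and $\gamma'(0)\neq 0$) all hold along the chain. The geometry — that preimages of line segments under $p$ stay low-degree-algebraic and under $\exp$ stay essentially horizontal (the "Preimages stay horizontal" lemma) — is the easy part and is already in hand; the proof is essentially an exercise in composing Propositions~\ref{prp:lift_through_polynomial} and \ref{prp:lift_through_exponential} with the single-stroke intersection count, then padding constants.
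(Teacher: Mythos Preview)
You are overcomplicating the chain. The paper's proof is literally two lines: apply Proposition~\ref{prp:lift_through_polynomial} to the lift through $p$, then Proposition~\ref{prp:lift_through_exponential} to the lift through $\exp$, both times with the \emph{same} $n=\abs{V}$, and simplify
\[
\abs{W(\tilde V,\tilde\gamma)}<6n^{2}\bigl(6dn^{2}(m+1)+1\bigr)\le 42dn^{4}(m+1).
\]
The observation you are missing is that no enlargement $V\to V_1$ is needed: the singular values of $g=p\circ\exp$ are exactly the critical values of $p$ together with the asymptotic value $p(0)$, so by hypothesis $V$ \emph{already} contains the critical values of $p$ and Proposition~\ref{prp:lift_through_polynomial} applies directly. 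For the second step, the intermediate set is $V'=\exp(\tilde V)\cup\{0\}$; one checks $p|_{V'}$ is injective (using $g(\tilde V)\cap\{\text{singular values}\}=\emptyset$ so $p(0)\notin g(\tilde V)$), $0\in V'$, and $\abs{V'}=\abs{\tilde V}+1\le n$ since $g|_{\tilde V}$ injects into $V\setminus\{\text{singular values}\}$. Hence Proposition~\ref{prp:lift_through_exponential} applies with the same $n$, and the $n^{4}$ falls out with no extra work.

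Your ``sharper route'' at the end (pulling back $\gamma_{pl}$ directly under $g$ and counting intersections once) is a legitimate alternative and indeed gives a better exponent ($n^{2}$ rather than $n^{4}$), but it is not what the paper does: the paper treats the two propositions as black boxes and simply composes them. Your first approach went astray only because you inserted an unnecessary enlargement step that cost you an extra factor of $n^{2}$.
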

\begin{proof}
We obtain the estimate simply by using consequtively Proposition~\ref{prp:lift_through_polynomial} and Proposition~\ref{prp:lift_through_exponential} with the same $\abs{V}=n$.

$$\abs{W(\tilde{V},\tilde{\gamma})}<6n^2(6dn^2(m+1)+1)\leq 42dn^4(m+1).$$
\end{proof}
 
\subsection{Spiders}
\label{subsec:spiders}

Now return to our usual setting with the captured exponential function $f:=c\circ f_0$ defined as in Subsection~\ref{subsec:capture}. Moreover, we assume that the external address of $a_0$ is not (pre-)periodic.

\begin{defn}[Spider]
	\label{defn:spider}
	An image $S_{\varphi}$ of the standard spider $S_0$ under an $\id$-type map $\varphi$ is called a \emph{spider}.	
\end{defn}

The notion of a spider comes with a few natural follow-up definitions.

\begin{defn}[Spider legs]
	The image of a ray tail $R_n$ under a spider map is called a \emph{leg} (of a spider).
\end{defn}

\begin{defn}[Subspider]
	For a spider $S_\varphi$, a non-empty union of its legs is called a \emph{subspider} of $S_\varphi$.
\end{defn}

We also want to have equality relation on the set of spiders.

\begin{defn}[Equal spiders]
	We say that the spiders $S_\varphi$ and $S_\psi$ are equal if for all $n\geq 0$ we have $\varphi(R_n)=\psi(R_n)$ (as point sets).
\end{defn}

\begin{lmm}[Equal spiders define the same point in $\mathcal{T}_f$]
	\label{lmm:equal_spiders}
	Let $S_\varphi=S_\psi$. Then $\varphi$ is isotopic to $\psi$ relative $S_0$, and consequently $[\varphi]=[\psi]$.	
\end{lmm}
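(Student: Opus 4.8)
The plan is to reduce the statement to a standard fact about isotopies of the plane that fix a collection of disjoint arcs to infinity, and then to control the behavior near $\infty$ so that the isotopy stays through $\id$-type maps (or at least through maps relative to $S_0$). First I would observe that $\varphi\circ\psi^{-1}$ is a quasiconformal self-homeomorphism of $\mathbb{C}$ which fixes each leg $\psi(R_n)=\varphi(R_n)$ setwise, hence in particular fixes every point of $P_f=\{a_n\}$ (the endpoints of the legs) and fixes $\infty$. It suffices to show $\varphi\circ\psi^{-1}$ is isotopic to $\id$ relative $S_\psi=S_\varphi$, since then precomposing that isotopy with $\psi$ gives an isotopy of $\psi$ to $\varphi$ relative $S_0$, and isotopy relative $S_0$ implies isotopy relative $P_f\subset S_0$, which is exactly the condition defining a point of $\mathcal{T}_f$; thus $[\varphi]=[\psi]$.

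Next I would build the isotopy. On each leg $\ell_n:=\psi(R_n)$ the map $h:=\varphi\circ\psi^{-1}$ restricts to a homeomorphism of $\ell_n$ onto itself fixing the finite endpoint $a_n$ and fixing $\infty$; since $\ell_n$ is an arc, $h|_{\ell_n}$ is isotopic to the identity of $\ell_n$ rel endpoints, and these isotopies can be chosen compatibly (they only interact at $\infty$, where everything fixes $\infty$). So after an initial isotopy supported in a neighborhood of $S_\varphi$ we may assume $h$ is the identity on all of $S_\varphi$. Now $S_\varphi$ is a union of disjoint arcs from the $a_n$ to $\infty$, and $\mathbb{C}\setminus S_\varphi$ has countably many complementary components, each a simply connected (indeed Jordan-type) domain whose boundary is a union of legs; on the closure of each such component $h$ is a homeomorphism fixing the whole boundary pointwise. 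By the Alexander trick (coning off from an interior point, or equivalently the standard fact that a homeomorphism of a disk fixing the boundary is isotopic rel boundary to the identity) each such $h$ is isotopic to the identity rel its boundary. Assembling these component-wise isotopies — which agree on the shared boundary legs — yields a global isotopy of $h$ to $\id$ fixing $S_\varphi$ pointwise throughout.

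The main obstacle is the behavior at $\infty$: the complementary domains of $S_\varphi$ are unbounded, so the naive Alexander trick (which needs a disk, or at least a domain whose closure in $\hat{\mathbb{C}}$ is a closed disk) must be applied in $\hat{\mathbb{C}}$ rather than $\mathbb{C}$, and one must check that the resulting isotopy is continuous up to $\infty$ and, ideally, does not disturb the $\id$-type normalization near $\infty$. Two points need care here: (i) the legs accumulate — infinitely many of them approach $\infty$ with $\Re z\to+\infty$ — so the complementary components do not have nice closures unless one works in a one-point compactification and uses that $S_\varphi\cup\{\infty\}$ is compact and locally connected at $\infty$ (this is where the $\id$-type condition, forcing $\varphi(z)-z\to 0$ along $S_0$, guarantees the legs stay in a half-plane $\mathbb{H}_r$ and the picture is controlled); (ii) the infinitely many component-wise isotopies must be glued into a jointly continuous isotopy, which requires a uniformity statement — again supplied by the uniform asymptotics $|\varphi_u(z)-z|\to 0$ near $\infty$ in the definition of $\id$-type maps. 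Since the lemma only asserts $[\varphi]=[\psi]$ in $\mathcal{T}_f$ and not that the isotopy is through $\id$-type maps, one can if needed sidestep (ii) by noting that isotopy relative $S_0$ need only be an isotopy through homeomorphisms of $\mathbb{C}$ (not necessarily $\id$-type), for which the gluing is standard; the $\id$-type refinement, if wanted, follows from the uniform control just mentioned.
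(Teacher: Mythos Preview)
Your overall strategy---reduce to $h=\varphi\circ\psi^{-1}$ equal to the identity on $S_\varphi$, then apply an Alexander-type argument on the complement---is exactly the paper's approach. However, you make a topological error: $\mathbb{C}\setminus S_\varphi$ is a \emph{single} simply-connected domain, not countably many. Each leg is an arc from a finite endpoint $\varphi(a_n)$ to $\infty$; one can always pass around the finite endpoint, so removing the legs does not disconnect the plane. Equivalently, in $\hat{\mathbb{C}}$ the set $S_\varphi\cup\{\infty\}$ is a (countably-branched) tree whose arcs meet only at $\infty$, and the complement of a tree in the sphere is connected and simply connected.

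This correction actually dissolves your ``main obstacle.'' There is no gluing of infinitely many component-wise isotopies, and hence no uniformity issue at $\infty$ coming from that gluing. The paper's proof is correspondingly short: after arranging $\varphi|_{S_0}=\psi|_{S_0}$ (your reparametrization step), it observes that $\psi\circ\varphi^{-1}$ is the identity on the \emph{locally connected} boundary of the single simply-connected domain $\mathbb{C}\setminus\varphi(S_0)$, and a single application of the fact that a homeomorphism of a disk fixing the boundary pointwise is isotopic to the identity rel boundary finishes the argument. The local connectedness at $\infty$---which you correctly flag in point (i)---is what makes the Riemann map extend to the boundary so that this disk argument applies; it holds because the legs have spherical diameters tending to zero (a consequence of $\Re a_n\to+\infty$ and the $\id$-type asymptotics).
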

\begin{proof}
	We might assume that $\varphi|_{S_0}=\psi|_{S_0}$, otherwise just isotope $\varphi$ in disjoint neighborhoods of the legs $\{\varphi(R_n)\}$ so that the legs get reparametrized. Consider the map $\psi\circ\varphi^{-1}$. It is equal to identity on the locally connected boundary of the simply-connected domain $\mathbb{C}\setminus\varphi(S_0)$. Hence $\psi\circ\varphi^{-1}|_{\mathbb{C}\setminus\varphi(S_0)}$ is isotopic to identity relative $\varphi(S_0)$. This implies that $[\varphi]=[\psi]$.  	
\end{proof}

Denote $\mathcal{O}_n=\{a_i\in P_f:i\leq n\}$.

\begin{defn}[Leg homotopy word]
	Let $S_\varphi$ be a spider. Then the \emph{leg homotopy word} $W_n^\varphi$ of $\varphi(R_n)$ is 
	$$W_n^\varphi:=W(\varphi(\mathcal{O}_n),\varphi(R_n)).$$
\end{defn}

Next theorem helps to estimate how the leg homotopy words change under Thurston iteration.

\begin{thm}[Combinatorics of preimage]
	\label{thm:homotopy_type_under_pullback}
	Let $\varphi$ be \idt. Then $$\abs{W_n^{\hat{\varphi}}}<A(n+2)^4 \max\{1,\abs{W_{n+1}^\varphi}\},$$ where $A$ is a positive real number.
\end{thm}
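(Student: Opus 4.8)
The plan is to realize the leg $\hat\varphi(R_n)$ as essentially a lift of the leg $\varphi(R_{n+1})$ under the entire map $g=\varphi\circ f\circ\hat\varphi^{-1}$, which is of the form $e^z+\kappa$, and then feed that into the lifting estimate Proposition~\ref{prp:lift_through_exponential}. Recall the defining relation of the $\sigma$-iteration: $\varphi\circ f = g\circ\hat\varphi$, so on spider legs, since $f(R_n)=R_{n+1}$ (up to the capture, which is compactly supported), we get $g(\hat\varphi(R_n))=\varphi(R_{n+1})$. Hence $\hat\varphi(R_n)$ is a connected component of $g^{-1}(\varphi(R_{n+1}))$, i.e.\ an r-curve preimage of the r-curve $\varphi(R_{n+1})$ under $e^z+\kappa$ (it is an r-curve, not merely a lift that needs rotating, because $\hat\varphi$ is $\id$-type, so Theorem~\ref{thm:pullback_of_id_type} guarantees its asymptotics are the standard horizontal ones — this is exactly what the normalization buys us).

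**The first step** is therefore to set up the relevant finite puncture sets. Proposition~\ref{prp:lift_through_exponential} applies to a lift under $\exp$ with $0\in V$; for $e^z+\kappa$ one conjugates by the translation $z\mapsto z-\kappa$, so one works with $V:=\varphi(\mathcal{O}_{n+1})-\kappa$ together with the asymptotic value, which after translation is $0$ — so $|V|\le (n+2)+1$. The preimage set $\tilde V$ should be taken to be $\hat\varphi(\mathcal{O}_n)$ (translated appropriately), and one must check $\exp|_{\tilde V}$ is injective and $\exp(\tilde V)\subset V$: injectivity holds because the external address of $a_0$ is non-(pre-)periodic, so the orbit points $a_i$ lie in distinct tracts / have distinct imaginary parts mod $2\pi i$, and $\hat\varphi$ being $\id$-type near $\infty$ does not disturb this for the points far out; for the finitely many inner points one uses that $\hat\varphi(\mathcal{O}_n)$ consists of distinct points and the iteration relation $g(\hat\varphi(a_i))=\varphi(a_{i+1})$. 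Then $n':=|V|\le n+2$ and $m':=|W(V,\varphi(R_{n+1}))|$, and I would note $W(V,\varphi(R_{n+1}))$ is computed with respect to $\varphi(\mathcal{O}_{n+1})$ — but the leg homotopy word $W_{n+1}^\varphi$ uses exactly $\varphi(\mathcal{O}_{n+1})$, so $m'=|W_{n+1}^\varphi|$ directly (the extra puncture $0$ coming from the asymptotic value only enlarges $V$, not the word, cf.\ the length-monotonicity lemmas; and translation by $\kappa$ is a homeomorphism and changes nothing).

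**The main computation** then reads: by Proposition~\ref{prp:lift_through_exponential},
$$\abs{W_n^{\hat\varphi}}=\abs{W(\tilde V,\hat\varphi(R_n))}<6(n')^2(m'+1)\le 6(n+2)^2\bigl(\abs{W_{n+1}^\varphi}+1\bigr)\le 6(n+2)^2\cdot 2\max\{1,\abs{W_{n+1}^\varphi}\},$$
which gives the claim with (say) $A=12$ and in fact with exponent $2$ rather than $4$ — so the stated bound with $(n+2)^4$ is comfortably satisfied (the weaker exponent presumably leaves room for the polynomial-composition generalization via Theorem~\ref{thm:lift_through_composition}, where the bound $42dn^4(m+1)$ appears). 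One should also remember that $f=c\circ f_0$ differs from $f_0=e^z+\kappa_0$ only on a bounded set $U$ disjoint from the legs $R_n$ for $n>0$, and $R_1=f(R_0)$ agrees with $f_0(R_0)$ outside $U$; so replacing $f$ by $f_0$ in the argument costs at most a bounded homotopy correction near the finitely many inner punctures, which is absorbed into $A$ (or handled by the monotonicity lemmas of Subsection~\ref{subsec:r_curves}).

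**The main obstacle** I anticipate is bookkeeping around the puncture sets and the capture rather than anything deep: one must be careful that the word $W_{n+1}^\varphi=W(\varphi(\mathcal{O}_{n+1}),\varphi(R_{n+1}))$ really is the right input for the lifting proposition — in particular that the homotopy class of $\hat\varphi(R_n)$ relative $\hat\varphi(\mathcal{O}_n)$ is genuinely the $g$-lift of the class of $\varphi(R_{n+1})$ relative $\varphi(\mathcal{O}_{n+1})$, and not relative some larger or smaller puncture set. This is where one invokes that the $\sigma$-iteration pulls back isotopy classes (the commuting Thurston diagram on the level of isotopy), that $g$ restricted to a neighborhood of $\infty$ on the relevant tracts is a conformal isomorphism onto a neighborhood of $\infty$, and that $\hat\varphi$ being $\id$-type pins down which component of $g^{-1}(\varphi(R_{n+1}))$ is $\hat\varphi(R_n)$ via the external address $s_n$ of $R_n$. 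Once those identifications are in place the numerical estimate is immediate from Proposition~\ref{prp:lift_through_exponential}.
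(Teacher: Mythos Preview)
Your approach is essentially the paper's: both recognize $\hat\varphi(R_n)$ as the r-curve lift of $\varphi(R_{n+1})$ under $g=e^z+\kappa$ and feed this into a lifting estimate. The paper simply invokes Theorem~\ref{thm:lift_through_composition} with $d=1$, $V=\varphi(\mathcal{O}_{n+1})$, $|V|=n+2$, obtaining $42(n+2)^4(\abs{W_{n+1}^\varphi}+1)$; you instead go straight to Proposition~\ref{prp:lift_through_exponential} and get the sharper $(n+2)^2$, which is fine (and you correctly diagnose why the paper states $(n+2)^4$).

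Two small cleanups. First, your ``extra puncture $0$'' worry is moot: since $\kappa=\varphi(a_0)$, the translated set $\varphi(\mathcal{O}_{n+1})-\kappa$ already contains $0$, so $|V|=n+2$ exactly and there is no need to invoke monotonicity (which in any case goes the wrong way for your purpose---enlarging $V$ can only \emph{increase} word length). Second, the capture causes no trouble at all: $U$ was chosen disjoint from $\bigcup_{n>0}R_n$, so $c|_{R_{n+1}}=\id$ for every $n\ge 0$ and hence $f(R_n)=c(f_0(R_n))=R_{n+1}$ on the nose; no ``bounded homotopy correction'' is needed.
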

\begin{proof}
	This is just a restatement of Theorem~\ref{thm:lift_through_composition} in a particular context. That is, when $V=\varphi(\mathcal{O}_{n+1})$, $\gamma=\varphi(R_{n+1})$ and $W(V,\gamma)=W_{n+1}^\varphi$. Then if we take $\tilde{\gamma}=\hat{\varphi}(R_n)$,
	$$\abs{W_n^{\hat{\varphi}}}<42(n+2)^4 (\abs{W_{n+1}^\varphi}+1)<A(n+2)^4\max\{\abs{W_{n+1}^\varphi},1\},$$
	where $A>0$ is a positive real number.
\end{proof}

\subsection{Spiders and \tei\ equivalence}

In the last subsection we prove the results showing that the homotopy type of $\id$-type maps can be uniquely encoded using spiders. More precisely, for a map $\varphi$ \idt\ one can uniquely recover its equivalence class $[\varphi]\in\mathcal{T}_f$ from the positions of points in $\varphi(P_f)$ and $W_n^\varphi$ for all $n\geq 0$. 

\begin{defn}[Equivalence of spiders]
	We say that two spiders $S_\varphi$ and $S_\psi$ are \emph{equivalent} if for all $n\geq 0$ we have $\varphi(a_{n})=\psi(a_{n})$ and legs with the same index are homotopic, i.e.\ $[\varphi(R_{n})]=[\psi(R_{n})]$ in $\mathbb{C}\setminus \varphi(P_f)$.
\end{defn}

\begin{prp}[Spider equivalence is Teichm\"uller equivalence]
	\label{prp:spiders_define_teich_point}
	Two spiders $S_\varphi$ and $S_\psi$ are equivalent if and only if $[\varphi]=[\psi]$, i.e.\ $\varphi$ is isotopic to $\psi$ relative ${P_f}$.
\end{prp}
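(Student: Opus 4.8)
The plan is to prove the equivalence by treating the two implications separately: the implication $[\varphi]=[\psi]\Rightarrow$ spider equivalence is soft, while the converse carries the weight and I would reduce it to Lemma~\ref{lmm:equal_spiders}.

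First, suppose $[\varphi]=[\psi]$. Because $\varphi$ and $\psi$ are of $\id$-type, the post-composition-by-affine freedom in the definition of $\mathcal{T}_f$ plays no role: if $A$ is affine and $A\circ\varphi$ is isotopic to $\psi$ relative $P_f$, then $A$ fixes $\psi(a_n)=A(\varphi(a_n))$ for all $n$, while $\varphi(a_n)=a_n+o(1)$ and $\psi(a_n)=a_n+o(1)$ along the escaping orbit $a_n\to\infty$, which forces $A=\id$. Hence $[\varphi]=[\psi]$ means exactly that $\varphi$ is isotopic to $\psi$ relative $P_f$. Such an isotopy is proper, so it extends to an isotopy of $\hat{\mathbb{C}}$ fixing $\infty$ and each $a_n$; applying it to $R_n$ gives a homotopy inside $\mathbb{C}\setminus\varphi(P_f)$, pinched at the endpoints $a_n$ and $\infty$, from $\varphi(R_n)$ to $\psi(R_n)$. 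Thus $\varphi(a_n)=\psi(a_n)$ and $[\varphi(R_n)]=[\psi(R_n)]$ for every $n$, which is spider equivalence.

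For the converse, assume the spiders are equivalent; in particular $\varphi(P_f)=\psi(P_f)$. By Lemma~\ref{lmm:equal_spiders} it suffices to produce an isotopy $\varphi_u$ of $\mathbb{C}$ relative $P_f$ with $\varphi_0=\varphi$ and $\varphi_1(R_n)=\psi(R_n)$ as point sets for all $n$, for then $[\varphi]=[\varphi_1]=[\psi]$. I would obtain $\varphi_u$ by pulling back through $\varphi$ an ambient isotopy $\Theta_u$ of $\hat{\mathbb{C}}$, relative $\varphi(P_f)\cup\{\infty\}$, with $\Theta_0=\id$ and $\Theta_1(\varphi(R_n))=\psi(R_n)$ for all $n$, built by exhaustion. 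At a finite stage one matches the legs $\varphi(R_0),\dots,\varphi(R_N)$ to $\psi(R_0),\dots,\psi(R_N)$: since each homotopy between the compact arcs $\varphi(R_n)\cup\{\infty\}$ and $\psi(R_n)\cup\{\infty\}$ meets only finitely many points of the discrete set $\varphi(P_f)$, this happens in a finitely punctured sphere, where the classical fact that homotopic embedded arcs (rel endpoints) are ambient isotopic rel the punctures applies; for the whole finite family one proceeds by induction on $N$, cutting along the legs already matched, using that the two spiders induce the same cyclic order of legs at $\infty$ (both being of $\id$-type, their legs are r-curves asymptotic to $t+2\pi i s_n+o(1)$, so the order at $\infty$ is the ordering of the heights $2\pi s_n$ in either case). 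This yields ambient isotopies $\Theta^{(N)}_u$, supported away from $\infty$, with $\Theta^{(N)}_1$ fixing $\psi(R_0),\dots,\psi(R_N)$.

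The step I expect to be the main obstacle is passing to the limit $N\to\infty$: one must arrange the successive corrections so their infinite composition converges to a homeomorphism of $\hat{\mathbb{C}}$ that is still the identity on \emph{all} of $\varphi(P_f)$ and at $\infty$, and assemble the $\Theta^{(N)}_u$ into one isotopy. The $\id$-type normalization is exactly what makes this work: by Theorem~\ref{thm:as_formula} and Definition~\ref{defn:id_type} the legs $\varphi(R_n)$ and $\psi(R_n)$ have the \emph{same} asymptotics at $\infty$, uniformly in $n$ along ray tails, so far-out portions of the two spiders already agree and the $N$-th correction may be confined to a bounded region; since moreover the $R_n$ are pairwise disjoint and escape to $\infty$, the supports of the $\Theta^{(N)}$ can be chosen to leave every compact set, so the composition stabilizes locally and extends continuously across $\infty$. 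If one prefers to avoid this bookkeeping, an alternative is to match all legs simultaneously inside each region $H_r$, reducing the homotopy data via Lemma~\ref{lmm:bar_paths_equivalence} to that of the Hawaiian-earring skeleton of Lemma~\ref{lmm:hawaiian_earring_skeleton}, which carries an honest fundamental group. Either way, the resulting $\Theta_1$ gives $\varphi':=\Theta_1\circ\varphi$, an $\id$-type map with $S_{\varphi'}=S_\psi$, and Lemma~\ref{lmm:equal_spiders} yields $[\varphi]=[\varphi']=[\psi]$.
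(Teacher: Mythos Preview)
Your two-step strategy (soft direction from the definitions; hard direction by building an ambient isotopy leg by leg and then invoking Lemma~\ref{lmm:equal_spiders}) is exactly the paper's, and you correctly single out the passage $N\to\infty$ as the only real difficulty.

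The convergence mechanism you propose, however, does not quite work as written. The claim that ``far-out portions of the two spiders already agree'' is false: the $\id$-type condition gives $|\varphi(z)-z|\to 0$ along $S_0$, so $\varphi(R_N)$ and $\psi(R_N)$ are asymptotically \emph{close} but nowhere literally equal near $\infty$, and hence the $N$-th correction cannot be confined to a bounded Euclidean region without a preliminary straightening step that you do not supply. Likewise the sentence about the leg homotopy ``meeting only finitely many points of $\varphi(P_f)$'' is confused: that homotopy avoids \emph{all} of $\varphi(P_f)$ by hypothesis; the genuine issue is whether the \emph{ambient} isotopy realizing it can be taken to fix the infinitely many punctures and the previously matched legs.

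The paper's fix is not to aim for bounded Euclidean supports at all, but to work in the spherical metric and let the supports shrink to $\{\infty\}$. After normalizing to $\varphi=\id$, the key observation is simply that the standard ray tails $R_k$ have spherical diameters tending to $0$. Removing $R_0,\dots,R_{n-1}$ splits the remaining legs into finitely many ``packets'' $S_n^j$ (one per slot of the higher/lower order you allude to), each contained in a Jordan domain $U_n^j$ of spherical diameter at most $\operatorname{diam}(S_n^j)+1/(n+1)$; the step-$n$ isotopy is supported entirely in $\bigcup_j U_{n-1}^j$. This single device simultaneously keeps the already-matched legs and all marked points fixed and makes the infinite concatenation converge, since $\max_j\operatorname{diam}(U_n^j)\to 0$. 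This Alexander-trick-style ``shrink the entanglements to a point'' packaging is the ingredient your sketch is missing; the Hawaiian-earring alternative you float is used by the paper only in the \emph{next} proposition (projective equivalence), not here.
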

\begin{proof}	
	$(\Leftarrow)$ ``If'' direction follows directly from the definitions.
	
	$(\Rightarrow)$ Assume that the spiders $S_\varphi$ and $S_\psi$ are equivalent.
	
	We need to prove that $S_\varphi$ can be isotoped into $S_\psi$ via an ambient isotopy of $\mathbb{C}\setminus P_f$. The statement of the proposition will then follow due to Lemma~\ref{lmm:equal_spiders}. To simplify the notation we assume that $\varphi=\id$, i.e.\ $S_\varphi=S_0$ (in the general case the proof is exactly the same). 
	
	The proof uses the same idea as Alexander's trick, where we shrink all ``entanglements'' into one point.
	
	First, note that one can introduce a natural linear order on the set of the legs of $S_0$: for every pair of legs $R_i$ and $R_j$ we can say which one is \emph{higher} or \emph{lower}. More precisely, choose a right half-plane $\mathbb{H}_r$ containing both $R_i$ and $R_j$ and join $a_i$, the endpoint of $R_i$, to a finite point of $\partial\mathbb{H}_r$ via an injective path $\gamma$ inside of $\mathbb{H}_r\setminus(R_i\cup R_j)$. The union of two curves $\gamma$ and $R_i$ divides $\mathbb{H}_r$ into two parts. We say that $R_j$ is \emph{higher} (resp. \emph{lower}) than $R_i$ if it is contained inside of the upper (resp. lower) part. 
	
	For every $n\geq 0$ there exists a minimal number $k_n\geq 1$ so that $S_0\setminus\cup_{i=0}^n R_i$ is a disjoint union of $k_n$ subspiders $S_n^j$, $j\in\{1,...,k_n\}$ such that for every $j\in\{1,...,k_n\}$ and $i\leq n$ all legs of the subspider $S_n^j$ are simultaneously higher or lower that $R_i$. Clearly, $k_n\leq n+2$.
	
	Let $U$ be a right half-plane containing $S_0$. Now define inductively a sequence of shrinking Jordan domains contained in $U$.
	\begin{itemize}
		\item \textbf{(Step 0)} Define $k_0$ mutually disjoint Jordan domains $U_0^j$ so that
		$$S_0^j\subset U_0^j\subset U$$
		and the diameter of $U_0^j$ in the spherical metric is less than the diameter of $S_0^j$ plus $1$.
		\item \textbf{(Step n)} Define $k_n$ mutually disjoint Jordan domains $U_n^j$ so that for some $i\leq k_{n-1}$
		$$S_n^j\subset U_n^j\subset U_{n-1}^i$$
		and the diameter of $U_n^j$ in the spherical metric is less than the diameter of $S_n^j$ plus $1/(n+1)$.
	\end{itemize}	
	
	We are going to construct the isotopy $\psi_u^\infty$ which is the concatenation of countably many ``local'' isotopies $\psi_u^n, n\geq 0$. Roughly speaking, via $\psi_u^0$ we isotope $\psi(R_0)$ into $R_0$, then via $\psi_u^1$ we isotope $\psi_u^0(R_1)$ relative $P_f$ into $R_1$ without moving $R_0$,..., then via $\psi_u^n$ we isotope $\psi_u^{n-1}(R_n)$ into $R_n$ without moving $R_0, R_1,...,R_{n-1}$, and so on. 
	
	We construct this sequence of isotopies inductively. Without loss of generality assume that $\psi(S_0)\subset U$ and $\psi|_{\mathbb{C}\setminus U}=\id$. Note that every leg $\psi(R_n)$ is homotopic to $R_n$ inside of $U\setminus P_f$ (consequence of the fact that $\psi$ is \idt\ ).
	\begin{itemize}
		\item \textbf{(Step 0)} Let $\psi_u^0, u\in[0,1]$ be an isotopy of $\hat{\mathbb{C}}\setminus P_f$ (or equivalently of $\mathbb{C}\setminus P_f$) so that
		\begin{enumerate}
			\item $\psi_0^0=\psi$,
			\item $\psi_1^0|=\id$ on $\mathbb{C}\setminus \cup_{j=1}^{k_0}U_0^j$,
			\item $\psi_u^0\equiv\id$ on $\mathbb{C}\setminus U$.
		\end{enumerate} 
		\item \textbf{(Step n)} Let $\psi_u^n, u\in[0,1]$ be an isotopy of $\hat{\mathbb{C}}\setminus P_f$ so that
		\begin{enumerate}
			\item $\psi_0^n=\psi_1^{n-1}$,
			\item $\psi_1^n|=\id$ on $\mathbb{C}\setminus \cup_{j=1}^{k_n}U_n^j$,
			\item $\psi_u^n\equiv\id$ on $\mathbb{C}\setminus \cup_{j=1}^{k_{n-1}}U_{n-1}^j$.
		\end{enumerate} 
	\end{itemize}	
	Note that every Step $n$ is well-defined since the preceding Step $n-1$ guarantees that the leg $\psi(R_n)$ is contained inside of some $U_{n-1}^j$ and is homotopic to $R_n$ inside of $U_{n-1}^j\setminus P_f$.
	
	In order to define the concatenation of these infinitely many isotopies we repara-metrize them so that $\psi_u^n$ is defined on the interval $[1-1/2^n,1-1/2^{n+1}]$. As $n\to\infty$, maps $\varphi_u^n$ are not equal to identity only in neighborhoods of $\infty$ that are getting smaller. Hence the infinite concatenation $\psi_u^\infty=\psi_u^0\cdot\psi_u^1\cdot...$ is well defined. This finishes the proof of the proposition. 	
\end{proof}

\begin{defn}[Projective equivalence of spiders]
	\label{defn:proj_equiv}
	We say that two spiders $S_\varphi$ and $S_\psi$ are projectively equivalent if for all $n\geq 0$ we have $\varphi(a_n)=\psi(a_n)$ and $W_n^\varphi=W_n^\psi$.
\end{defn}

\begin{prp}[Projectively equivalent spiders are equivalent]
	\label{prp:proj_spiders_define_teich_point}
	Two spiders $S_\varphi$ and $S_\psi$ are \emph{projectively equivalent} if and only if they are equivalent.
\end{prp}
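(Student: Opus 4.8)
\textbf{The ``if'' direction} is immediate and I would dispose of it first: if $S_\varphi$ and $S_\psi$ are equivalent then $\varphi(a_n)=\psi(a_n)$ and $[\varphi(R_n)]=[\psi(R_n)]$ in $\mathbb{C}\setminus\varphi(P_f)$, and since $\varphi(\mathcal{O}_n)\subset\varphi(P_f)$ a homotopy rel $\varphi(P_f)$ is in particular one rel $\varphi(\mathcal{O}_n)$; as $W(\,\cdot\,,\,\cdot\,)$ depends only on the foot and on the homotopy class rel the recorded punctures, this gives $W_n^\varphi=W_n^\psi$, i.e.\ projective equivalence (Definition~\ref{defn:proj_equiv}).

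\textbf{For the converse} assume $\varphi(a_n)=\psi(a_n)$ and $W_n^\varphi=W_n^\psi$ for all $n$; it suffices to produce, for each fixed $n$, a homotopy rel $\varphi(P_f)$ (with pinched endpoints as in Definition~\ref{defn:pinched_endpts_homotopy}) from $\varphi(R_n)$ to $\psi(R_n)$. First I would \emph{reduce to finitely many marked points}: via Lemma~\ref{lmm:bar_paths_equivalence} the pinched class rel $\varphi(P_f)$ is encoded by the loop $\varphi(R_n)_{\mathrm{loop}}\in\pi_1(H_r,\infty)$, and by Lemma~\ref{lmm:hawaiian_earring_skeleton} this group is that of a Hawaiian earring; since $\varphi,\psi$ are $\id$-type and the legs respect the asymptotics of Theorem~\ref{thm:as_formula}, each such loop is, up to homotopy rel $\varphi(P_f)$, supported on only finitely many ``ears''. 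Hence $[\varphi(R_n)]=[\psi(R_n)]$ rel $\varphi(P_f)$ is equivalent to $[\varphi(R_n)]=[\psi(R_n)]$ rel $\varphi(\mathcal{O}_N)$ for every $N$. I would then prove this by induction on $N$, for all indices at once: let $H(N)$ assert that $[\varphi(R_k)]=[\psi(R_k)]$ rel $\varphi(\mathcal{O}_N)$ for every $k$. When $N\le k$ this is purely formal --- since $\varphi(\mathcal{O}_N)\subset\varphi(\mathcal{O}_k)$, the word $W(\varphi(\mathcal{O}_N),\varphi(R_k))$ is obtained from $W_k^\varphi$ by deleting the generators of the extra punctures, i.e.\ is a fixed function of $W_k^\varphi=W_k^\psi$; this proves $H(0)$ and also settles all $k\ge N$ in the inductive step.

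\textbf{The substance is the step $H(N)\Rightarrow H(N+1)$ for $k\le N$}, where one must control the single new winding, around $\varphi(a_{N+1})$. Here I would use the disjointness of distinct legs of a spider together with $H(N)$ and the equality $W_{N+1}^\varphi=W_{N+1}^\psi$: the leg $\varphi(R_{N+1})$ is a simple arc from $\varphi(a_{N+1})$ to $\infty$ meeting $\varphi(P_f)$ only at its foot and disjoint from $\varphi(R_k)$, and I would argue that, after a homotopy rel $\varphi(\mathcal{O}_N)$ (justified by $H(N)$ applied to the index $N+1$, which lets one replace $\varphi(R_{N+1})$ by a curve equivalent to $\psi(R_{N+1})$, disjoint from $\psi(R_k)$), one obtains a single arc $\tau$ from $\varphi(a_{N+1})$ to $\infty$ avoiding $\varphi(\mathcal{O}_N)$ and simultaneously disjoint from $\varphi(R_k)$ and $\psi(R_k)$. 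Then both legs lie in $W:=\mathbb{C}\setminus(\varphi(\mathcal{O}_N)\cup\tau)$; since $\tau\cup\{\infty\}$ is removed, $\mathbb{C}\setminus\tau$ is simply connected, so $\pi_1(W,\infty)$ is free on the loops around $\varphi(a_0),\dots,\varphi(a_N)$, it includes into $\pi_1(\mathbb{C}\setminus\varphi(\mathcal{O}_{N+1}),\infty)$ as the corresponding free subgroup, and the further projection to $\pi_1(\mathbb{C}\setminus\varphi(\mathcal{O}_N),\infty)$ (which kills $g_{\varphi(a_{N+1})}$) is the identity on that subgroup, hence injective on it. The difference loop of $\varphi(R_k)$ and $\psi(R_k)$ now lies in $\pi_1(W,\infty)$ and maps to $1$ in $\pi_1(\mathbb{C}\setminus\varphi(\mathcal{O}_N),\infty)$ by $H(N)$; by the injectivity it is already trivial in $\pi_1(W,\infty)$, hence in $\pi_1(\mathbb{C}\setminus\varphi(\mathcal{O}_{N+1}),\infty)$, so $[\varphi(R_k)]=[\psi(R_k)]$ rel $\varphi(\mathcal{O}_{N+1})$. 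This closes the induction, and combined with the finite reduction proves $[\varphi(R_n)]=[\psi(R_n)]$ rel $\varphi(P_f)$ for all $n$, i.e.\ equivalence of $S_\varphi$ and $S_\psi$; Proposition~\ref{prp:spiders_define_teich_point} then upgrades this to $[\varphi]=[\psi]$ in $\mathcal{T}_f$.

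\textbf{The main obstacle} is the construction of the common slit $\tau$. Disjointness of distinct legs freely gives an arc avoiding the $\varphi$-legs, but a lasso that $\psi(R_k)$ might a priori form around $\varphi(a_{N+1})$ could obstruct every arc avoiding $\psi(R_k)$ as well; ruling this out is exactly where one must invest the equalities $W_{N+1}^\varphi=W_{N+1}^\psi$ (the correlated leg $R_{N+1}$ emanating from $\varphi(a_{N+1})$ ``sees'' and forbids such a lasso) and $H(N)$, together with a careful isotopy--extension in $\mathbb{C}\setminus\varphi(\mathcal{O}_N)$ that does not disturb the classes $[\varphi(R_k)]$ rel $\varphi(\mathcal{O}_{N+1})$ one is trying to compare. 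A secondary delicate point is the finite reduction itself: the pinched-endpoint homotopies near $\infty$ and the Hawaiian-earring model must be handled with the asymptotics of Theorem~\ref{thm:as_formula} and the $\id$-type property in mind, so as to guarantee that the loops $\varphi(R_n)_{\mathrm{loop}}$ are genuinely finitely supported.
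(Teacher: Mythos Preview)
Your proposal is correct and follows essentially the same route as the paper: both argue the non-trivial direction by (i) reducing equality of the pinched homotopy classes rel $\varphi(P_f)$ to equality rel every finite truncation $\varphi(\mathcal{O}_N)$ via the Hawaiian-earring model (Lemma~\ref{lmm:hawaiian_earring_skeleton}), and then (ii) running an induction on $N$ that uses disjointness of the legs $R_n$ and $R_{N+1}$ together with $W_{N+1}^\varphi=W_{N+1}^\psi$.

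The one place where you make life harder than the paper does is the inductive step. You aim to produce a \emph{common} slit $\tau$ from $\varphi(a_{N+1})$ to $\infty$ disjoint from \emph{both} $\varphi(R_k)$ and $\psi(R_k)$, and you correctly flag this as the delicate point. The paper sidesteps this entirely: it simply observes that the homotopy type of $\varphi(R_n)$ rel $\varphi(\mathcal{O}_{k+1})$ is a \emph{function} of its type rel $\varphi(\mathcal{O}_k)$ together with $W_{k+1}^\varphi$, because $\varphi(R_n)$ lives in the complement of the simple arc $\varphi(R_{k+1})$ (which starts at $\varphi(a_{k+1})$ and goes to $\infty$). One applies this function once to the $\varphi$-data and once to the $\psi$-data; since the inputs agree by induction and by $W_{k+1}^\varphi=W_{k+1}^\psi$, the outputs agree. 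Equivalently: use an ambient isotopy rel $\varphi(\mathcal{O}_{N+1})$ carrying $\varphi(R_{N+1})$ to $\psi(R_{N+1})$ (this exists because $W_{N+1}^\varphi=W_{N+1}^\psi$), push $\varphi(R_k)$ along it, and then take $\tau=\psi(R_{N+1})$ --- no separate ``common slit'' needs to be manufactured. So your obstacle dissolves once you invoke $W_{N+1}^\varphi=W_{N+1}^\psi$ directly (rather than only $H(N)$ at index $N+1$) to get the isotopy rel $\mathcal{O}_{N+1}$; with that adjustment your argument and the paper's are the same.
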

\begin{proof}
	$(\Leftarrow)$  If $\varphi(R_n)$ and $\psi(R_n)$ are homotopic relative to $\psi(P_f)$, then they are homotopic relative any subset of $\psi(P_f)$. 
	
	$(\Rightarrow)$ Assume now that the spiders $S_\varphi$ and $S_\psi$ are projectively equivalent. Let $r\in\mathbb{R}$ be such that $\mathbb{H}_r$ contains both $S_\varphi$ and $S_\psi$, and consider $H_r=H_r(\varphi(P_f), \varphi(R_0))$ (defined as before Lemma~\ref{lmm:bar_paths_equivalence}).
	
	We want to prove that the spider legs $\varphi(R_n)$ and $\psi(R_n)$ belong to the same homotopy class in $H_r$ (the condition $W_n^{\varphi}=W_n^\psi$ says only that $\varphi(R_n)$ and $\psi(R_n)$ are homotopic in $H_r$ relative to only a finite set of punctures $\varphi(\{a_i\}_{i=0}^n)$).
	
	We claim that the homotopy type of each $\varphi(R_n)$ in $H_r$ is uniquely defined by the knowledge of $W_k^\varphi$ for all $k\geq n$. 
	
	Pick $k\geq n$. Assume that we know the homotopy type of $\varphi(R_n)$ in the ``finitely-punctured'' $H_r\cup \varphi(P_f\setminus\{a_i\}_{i=0}^k)$ (for $k=n$ we have it via $W_n^\varphi$) and know $W_{k+1}^\varphi$. Since $R_{k+1}\cap R_n=\emptyset$ and $\varphi$ is \idt, the homotopy type of $\varphi(R_n)$ in $H_r\cup \varphi(P_f\setminus\{a_i\}_{i=0}^{k+1})$ is uniquely defined.
	
	Next, from Lemma~\ref{lmm:hawaiian_earring_skeleton} we know that $H_r$ deformation retracts onto a subspace, homeomorphic to the Hawaiian earring $\mathcal{H}=\cup C_n$, so that the ``straight'' generator $g_{v_n}$ of $\pi_1(H_r,\infty)$ is mapped to an element of $\pi_1(\mathcal{H},0)$ represented by the circle $C_n$. It is well-known fact about Hawaiian earrings (e.g.\ see \cite{hawaiian}) that $\pi_1(\mathcal{H},0)$ is a subgroup of $\varprojlim F_j$, where $F_j$ is the fundamental group of $\cup_1^j C_i$ with the base point at $0$, considered as a subgroup of $\pi_1(\mathcal{H},0)$ after collapsing all other $C_i$ to $0$. Hence, if $\theta_1,\theta_2\in\pi_1(\mathcal{H},0)$ are such that $\theta_1=\theta_2$ in $F_j$ for all $j$, then $\theta_1=\theta_2$ in $\pi_1(\mathcal{H},0)$. From this follows that the homotopy type of $\varphi(R_n)$ in $H_r$ is uniquely determined by its homotopy types in every $H_r\cup \varphi(P_f\setminus\{a_i\}_{i=0}^k)$ for $k\geq n$.	
\end{proof}

From Proposition~\ref{prp:spiders_define_teich_point} and Proposition~\ref{prp:proj_spiders_define_teich_point} follows the theorem that identifies the projective equivalence of spiders and \tei\ equivalence.

\begin{thm}[Projective equivalence of spiders is \tei\ equivalence]
	\label{thm:W_define_teich_point}
	Two spiders $S_\varphi$ and $S_\psi$ are projectively equivalent if and only if $[\varphi]=[\psi]$, i.e.\ $\varphi$ is isotopic to $\psi$ relative ${P_f}$.
\end{thm}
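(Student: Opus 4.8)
The plan is to read this theorem off the two preceding propositions by composing the equivalences they establish. Recall the two relations on spiders introduced just above: $S_\varphi$ and $S_\psi$ are \emph{equivalent} when $\varphi(a_n)=\psi(a_n)$ for all $n$ and the legs with equal index are homotopic relative the full puncture set $\varphi(P_f)$, and \emph{projectively equivalent} when $\varphi(a_n)=\psi(a_n)$ for all $n$ and all leg homotopy words coincide, $W_n^\varphi=W_n^\psi$. Proposition~\ref{prp:proj_spiders_define_teich_point} says these two relations are the same, and Proposition~\ref{prp:spiders_define_teich_point} says spider equivalence coincides with $[\varphi]=[\psi]$ in $\mathcal{T}_f$. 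So the proof is just the chain: $S_\varphi$ and $S_\psi$ are projectively equivalent $\iff$ (by Proposition~\ref{prp:proj_spiders_define_teich_point}) they are equivalent $\iff$ (by Proposition~\ref{prp:spiders_define_teich_point}) $[\varphi]=[\psi]$, i.e.\ $\varphi$ is isotopic to $\psi$ relative $P_f$. Both arrows are biconditional, so nothing further is required; the theorem is a formality once the two propositions are in place.

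Since the theorem itself is a formality, I would make sure the genuine content is visible in the propositions that feed it. The first real difficulty sits in the $(\Rightarrow)$ half of Proposition~\ref{prp:proj_spiders_define_teich_point}: the word $W_n^\varphi$ only remembers the homotopy class of $\varphi(R_n)$ relative the \emph{finite} set $\varphi(\{a_i\}_{i=0}^n)$, and one must boost this to an honest homotopy relative all of $P_f$ inside $H_r$. The way around is a two-layer argument. First, an induction on $k\ge n$ that reinstates one puncture $\varphi(a_{k+1})$ at a time: this step is legitimate because $R_{k+1}\cap R_n=\emptyset$ and $\varphi$ is \idt, so $\varphi(R_n)$ acquires no winding around $\varphi(a_{k+1})$ beyond what $W_{k+1}^\varphi$ already records. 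Second, a passage to the limit via Lemma~\ref{lmm:hawaiian_earring_skeleton}: $H_r$ deformation retracts onto a Hawaiian earring, and $\pi_1$ of the Hawaiian earring injects into the inverse limit $\varprojlim F_j$ of the finite-rank free groups, so that two loops agreeing in every finite truncation agree in $\pi_1(H_r,\infty)$. This is the point I expect to be the crux of the whole circle of ideas.

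The second obstacle, carried by Proposition~\ref{prp:spiders_define_teich_point}, is the upgrade from ``$\varphi(R_n)$ is homotopic to $\psi(R_n)$ rel $P_f$ for each $n$'' to ``$\varphi$ is isotopic to $\psi$ rel $P_f$'' as ambient homeomorphisms. This is an Alexander-trick-type construction: fix a nested family of shrinking Jordan domains, each enclosing a subspider obtained by deleting the first few legs, strip off the legs one index at a time by local isotopies $\psi_u^n$ supported inside the level-$n$ domains, and concatenate. The delicate bookkeeping is that the infinite concatenation $\psi_u^0\cdot\psi_u^1\cdots$ is a well-defined isotopy of $\mathbb{C}\setminus P_f$: the supports of the $\psi_u^n$ eventually lie in arbitrarily small spherical neighborhoods of $\infty$, so reparametrizing $\psi_u^n$ onto $[1-2^{-n},\,1-2^{-(n+1)}]$ produces a map continuous up to and including $u=1$. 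If one had to prove the present theorem without quoting these two propositions, the real work would be exactly this interplay between the finite-puncture combinatorial data (the words $W_n$, living in free groups) and the infinite-puncture topological object (the isotopy class rel $P_f$), and checking that no information is lost in either limit.
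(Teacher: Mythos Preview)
Your proof is correct and matches the paper's approach exactly: the theorem is stated immediately after the two propositions and is deduced from them in one line, just as you do. Your additional commentary on where the real content lies (the Hawaiian-earring inverse-limit argument in Proposition~\ref{prp:proj_spiders_define_teich_point} and the Alexander-trick infinite concatenation in Proposition~\ref{prp:spiders_define_teich_point}) is accurate and well-placed, but the theorem itself is, as you say, a formality.
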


\begin{remark}
	Note that the only properties of $S_0$ that we used in the proofs of Lemma~\ref{lmm:equal_spiders} and Propositions~\ref{prp:spiders_define_teich_point}, \ref{prp:proj_spiders_define_teich_point} are that $S_0$ is contained in some right half-plane and all $R_n$ are mutually disjoint (except at $\infty$) injective r-curves with the spherical diameters tending to $0$ as $n\to\infty$. Hence the statements are obviously true for every such more general choice of $S_0$ (and after replacement of $P_f$ by the set of endpoints of $S_0$).
\end{remark}

\section{Invariant compact subset}

\label{sec:inv_compact_subset}

In this section we construct a compact invariant subset $\mathcal{C}_f$ of the \tei\ space $\mathcal{T}_f$. In the first theorem we present the construction and prove invariance. A statement that $\mathcal{C}_f$ is compact will be the content of the second theorem.

Let $f=c\circ f_0$ be the captured exponential function constructed in Subsection~\ref{subsec:capture}, and for $n\geq 0$ let $t_n$ be the potential of the point $a_n\in P_f$ (then $t_{n+1}=F(t_n)$).

Define the set $\mathcal{P}':=\{\frac{t_{n+1}+t_n}{2}: n\geq 0\}$ and if $\rho=\frac{t_{n+1}+t_n}{2}\in\mathcal{P}'$, let $N(\rho):=n$. Further, define $D_\rho:=\mathbb{D}_\rho(0)$ and $M_\rho:=e^{2t_{N(\rho)}}$.

The subset $\mathcal{C}_f$ is constructed in the form of a list of conditions.

\begin{thm}[Invariant subset]
	\label{thm:invariant_subset}
	Fix some $\rho\in\mathcal{P}'$ and denote $N:=N(\rho)$. Let $\mathcal{C}_f(\rho)\subset\mathcal{T}_f$ be the \emph{closure} of the set of points in $\mathcal{T}_f$ represented by $\id$-type maps $\varphi$ for which there exists an isotopy $\varphi_u, u\in[0,1]$ \idt\ maps such that $\varphi_0=\id,\varphi_1=\varphi$, and the following conditions are simultaneously satisfied.
	\begin{enumerate}		
		\item (Marked points stay inside of $D_\rho$) If $n\leq N$,
		$$\varphi_u(a_n)\in D_\rho.$$
		\item (Precise asymptotics outside of $D_\rho$) If $n>N$, then
		$$\abs{\varphi_u(a_n)-a_n}<1/n.$$		
		\item (Separation inside of $D_\rho$) If $k<l\leq N$, then $$\abs{\varphi_u(a_k)-\varphi_u(a_l)}>\frac{1}{(M_\rho)^{N-l+1}}.$$
		\item (Bounded homotopy) If $n\leq N$, then 		
		$$\abs{W_n^{\varphi_u}}<A^{N-n+1}\left(\frac{(N+2)!}{(n+1)!}\right)^4$$		
		where $A$ is the constant from Theorem~\ref{thm:homotopy_type_under_pullback}.\\
	\end{enumerate}
	
	Then if $\rho\in\mathcal{P}'$ is big enough, the subset $\mathcal{C}_f(\rho)$ is well-defined, invariant under the $\sigma$-map and contains $[\id]$.	
\end{thm}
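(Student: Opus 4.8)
The plan is to establish the three claims — that $\mathcal{C}_f(\rho)$ is well-defined, contains $[\id]$, and is $\sigma$-invariant — in that order, with all the work going into invariance. Since $\mathcal{C}_f(\rho)$ is by construction a \emph{closed} subset of $\mathcal{T}_f$ and $\sigma$ is continuous, it suffices to show that $\sigma$ carries the (not yet closed) set $\mathcal{C}_f^{\circ}(\rho)$ of points represented by $\id$-type maps satisfying conditions~(1)--(4) into $\mathcal{C}_f(\rho)$; closedness then upgrades this to $\sigma(\mathcal{C}_f(\rho))\subset\mathcal{C}_f(\rho)$. That $[\id]\in\mathcal{C}_f(\rho)$ is checked directly with $\varphi=\id$ and the constant isotopy: (2) is vacuous, while (1),(3),(4) reduce respectively to $|a_n|<\rho$ for $n\le N$ together with $|a_n|>\rho$ for $n>N$, to a lower bound on the finitely many distances $|a_k-a_l|$, and to the finite bounds on $|W_n^{\id}|$ from the last lemmas of Subsection~\ref{subsec:r_curves}; all of these hold once $\rho$ — equivalently $t_N=t_{N(\rho)}$ — is large, because $t_n=F^n(t_0)$ grows super-exponentially, so that $\rho\asymp\tfrac12 t_{N+1}\asymp\tfrac12 e^{t_N}$ eventually dwarfs $t_N$, $\max_{n\le N}|s_n|$, the thresholds $M_\rho^{-(\cdot)}$, and the right-hand sides in (4). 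In particular $\mathcal{C}_f(\rho)\neq\varnothing$, which is all that ``well-defined'' requires.

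For invariance, fix $[\varphi]\in\mathcal{C}_f^{\circ}(\rho)$ witnessed by an $\id$-type $\varphi$ and an isotopy $\varphi_u$ ($\varphi_0=\id$, $\varphi_1=\varphi$) satisfying (1)--(4). Concatenating $\varphi_u$ with a fixed $\id$-type isotopy from $\id$ to $c^{-1}$ and applying Theorem~\ref{thm:pullback_of_id_type} produces an isotopy $\hat\varphi_u$ of $\id$-type maps with $\hat\varphi_0=\id$ (the lift of $c^{-1}$, which takes $f$ to $f_0$) and $\hat\varphi_1=\hat\varphi$, together with entire maps $\varphi_u\circ f\circ\hat\varphi_u^{-1}=e^{z}+\kappa_u$ whose parameters $\kappa_u$ are continuous, hence bounded, on $[0,1]$, with $\kappa_0=\lambda$ the singular value of $f_0$. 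Reading off the Thurston diagram and using $f(a_n)=a_{n+1}$ gives the key identity
\begin{equation*}
	\hat\varphi_u(a_n)=\log\bigl(\varphi_u(a_{n+1})-\kappa_u\bigr),\qquad a_n=\log(a_{n+1}-\lambda),
\end{equation*}
the logarithms being the branches fixed by the $\id$-type normalizations; for $n>N$ these visibly agree, and for $n\le N$ only the real part (a branch-independent quantity) enters the estimates below. Thus each instance of (1)--(4) for $\hat\varphi_u$ becomes an elementary bound on a difference of logarithms, fed by the corresponding instance for $\varphi_u$ and the control on $\kappa_u$. One must also verify (1)--(4) along the fixed first half of the concatenated isotopy (the lifts of $\id\leftrightarrow c^{-1}$), but there all but finitely many $a_n$ are fixed and the moving ones stay inside the bounded set $U\subset D_\rho$, so (1)--(4) hold for $\rho$ large exactly because the $M_\rho^{-(\cdot)}$ thresholds lie far below the fixed positive distances between the objects in $U$, and the relevant $W_n$ are bounded constants.

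I then run through (1)--(4) for $\hat\varphi_u$. For (2), with $n>N$, condition (2) for $\varphi_u$ gives $\varphi_u(a_{n+1})=a_{n+1}+\epsilon$, $|\epsilon|<1/(n+1)$, so $\hat\varphi_u(a_n)-a_n=\log\!\bigl(1+\tfrac{\epsilon-\kappa_u+\lambda}{a_{n+1}-\lambda}\bigr)$ has modulus $\lesssim(|\kappa_u-\lambda|+1)/|a_{n+1}|\asymp\mathrm{const}/t_{n+1}<1/n$ for all $n>N$ once $N$ is large, by super-exponential growth. For (1), with $n\le N$: $\Re\hat\varphi_u(a_n)=\log|\varphi_u(a_{n+1})-\kappa_u|$ is $\le\log(\rho+\sup_u|\kappa_u|)\asymp t_N$ by (1) for $\varphi_u$, and is bounded below because $\varphi_u(a_{n+1})-\kappa_u$ stays bounded away from $0$; with $|\Im\hat\varphi_u(a_n)|\le 2\pi|s_n|+O(1)\lesssim t_N$ this yields $|\hat\varphi_u(a_n)|\lesssim t_N\ll\rho$. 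For (3), with $k<l\le N$, using $|\log z-\log w|\ge|z-w|/\max(|z|,|w|)$ and, when $l<N$, condition (3) for $\varphi_u$ at index $l+1$,
\begin{equation*}
	|\hat\varphi_u(a_k)-\hat\varphi_u(a_l)|\ge\frac{|\varphi_u(a_{k+1})-\varphi_u(a_{l+1})|}{\rho+\sup_u|\kappa_u|}>\frac{M_\rho^{-(N-l)}}{\rho+\sup_u|\kappa_u|}>\frac1{M_\rho^{\,N-l+1}},
\end{equation*}
the last step being precisely the inequality $M_\rho=e^{2t_N}>\rho+\sup_u|\kappa_u|\asymp\tfrac12 e^{t_N}$; the case $l=N$ is easier since then $\Re\hat\varphi_u(a_N)\approx t_N$ while $\Re\hat\varphi_u(a_k)\le t_N-\log 2+o(1)$. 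For (4), with $n\le N-1$, Theorem~\ref{thm:homotopy_type_under_pullback} and condition (4) for $\varphi_u$ at index $n+1$ telescope:
\begin{equation*}
	|W_n^{\hat\varphi_u}|<A(n+2)^4\max\{1,|W_{n+1}^{\varphi_u}|\}\le A(n+2)^4\cdot A^{\,N-n}\Bigl(\tfrac{(N+2)!}{(n+2)!}\Bigr)^{4}=A^{\,N-n+1}\Bigl(\tfrac{(N+2)!}{(n+1)!}\Bigr)^{4},
\end{equation*}
since $(n+2)/(n+2)!=1/(n+1)!$; the boundary case $n=N$ needs $|W_{N+1}^{\varphi_u}|\le 1$, which I would obtain from (1),(2) and the $\id$-type property — the leg $\varphi_u(R_{N+1})$ runs from a point within $1/(N+1)$ of $a_{N+1}$ out to $\infty$ close to the horizontal asymptote of $R_{N+1}$, whereas the points it could enclose lie either deep in $D_\rho$ or right next to $a_{N+1}$, so by the ray-tail geometry of Theorem~\ref{thm:as_formula} it is homotopic to $R_{N+1}$, whence $W_{N+1}^{\varphi_u}=W_{N+1}^{\id}$ is trivial. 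This gives $\sigma[\varphi]=[\hat\varphi]\in\mathcal{C}_f^{\circ}(\rho)$, and the closure together with continuity of $\sigma$ finishes the proof.

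The main obstacle is concentrated in condition (4), and to a lesser extent (1). One needs the boundary identity $W_{N+1}^{\varphi_u}=W_{N+1}^{\id}$ (and, more generally, triviality of the outer leg words $W_n^{\varphi_u}$, $n>N$), which forces the $\id$-type property, the precise-asymptotics condition (2), and the super-exponential spacing of the ray tails to be used in a genuinely combined way rather than separately. Beyond that, the delicate point is that the four numerical thresholds are mutually \emph{self-consistent}: pulling back through $\sigma$ must reproduce each of (1)--(4) \emph{exactly}, without losing a constant, which is exactly why $\rho=\tfrac{t_{N+1}+t_N}{2}$, $M_\rho=e^{2t_N}$, and the $A$- and factorial-weights in (4) are chosen as they are; in particular verifying that $\varphi_u(a_{n+1})-\kappa_u$ cannot approach $0$ — needed for the lower bound in (1) — is part of this calibration. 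Everything else is routine estimation with the logarithm and super-exponential growth.
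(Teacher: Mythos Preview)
Your overall architecture matches the paper's: reduce to $\mathcal{C}_f^\circ(\rho)$ by continuity, concatenate with $c_u^{-1}$ so the lift starts at $\id$, use the identity $\hat\varphi_u(a_n)=\log(\varphi_u(a_{n+1})-\kappa_u)$, and verify (1)--(4) one by one. Your treatment of (2), (3), and the telescoping in (4) is essentially the paper's, and your handling of the boundary case $|W_{N+1}^{\varphi_u}|\le 1$ is the same idea as the paper's (the paper just invokes the horizontal separation $\Re a_n+1<\Re a_{n+1}$ for $n>N$).

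There is, however, a genuine gap in your verification of condition~(1). You assert $|\Im\hat\varphi_u(a_n)|\le 2\pi|s_n|+O(1)$, but this is not justified and in general false. The branch of $\log(\varphi_u(a_{n+1})-\kappa_u)$ is fixed by following the leg $\varphi_u(R_{n+1})$ from $\infty$ (where the $\id$-type normalization pins the imaginary part to $2\pi s_n$) down to $\varphi_u(a_{n+1})$; every time this leg winds once around the singular value $\kappa_u$, the imaginary part of $\hat\varphi_u(a_n)$ picks up $\pm 2\pi$. Nothing in conditions (1)--(3) prevents many such windings --- this is precisely what condition~(4) is for. The paper uses (4) at index $n+1$ to get
\[
|\Im\hat\psi_u(a_n)-2\pi s_n|\le 2\pi\Bigl(|W_{n+1}^{\psi_u}|+1\Bigr)<2\pi\Bigl(A^{N-n}\Bigl(\tfrac{(N+2)!}{(n+2)!}\Bigr)^4+1\Bigr),
\]
and only then does the super-exponential growth of $t_N$ force this below $\rho/2$. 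Your own earlier remark that ``for $n\le N$ only the real part \ldots\ enters the estimates below'' is correct for (3) and (4), but condition~(1) genuinely needs the imaginary part, and that requires (4). Without this step the calibration you describe in your last paragraph --- that the four conditions feed back into each other exactly --- is broken: (4) is not only self-reproducing under pullback, it is also what makes (1) close up.
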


Let us briefly discuss the content of the theorem before proving it. Our invariant subset (and compact, as will be seen later) is described as a closure of a set of $\id$-type maps $\varphi$ satisfying conditions (1)-(4).

Conditions (1)-(2) say that the maps $\varphi$ have to be ``uniformly \idt'', that is, the marked points outside of a disk $D_\rho$ have precise asymptotics, while inside of $D_\rho$ we allow some more freedom.

Condition (3) tells us that the marked points inside of $D_\rho$ cannot come very close to each other --- this is necessary to keep our set bounded in the \tei\ space. Moreover, it is needed to control the distance to the asymptotic value --- if a marked point is too close to it, then after Thurston iteration its preimage has the real part close to $-\infty$, and this spoils condition (1).

Condition (4) takes care of the homotopy information and provides bounds for leg homotopy words of points inside of $D_\rho$. Note that similar bounds for marked points outside of $D_\rho$ are encoded implicitly in condition (2).

\begin{proof}[Proof of Theorem~\ref{thm:invariant_subset}]
	Let $\mathcal{C}_f^\circ(\rho)\subset\mathcal{C}_f(\rho)$ be the set of points in $\mathcal{T}_f$ of which we take the closure in the statement of the theorem, i.e.\ represented by $\id$-type maps $\varphi$ for which there exists an isotopy $\varphi_u$ \idt\ maps such that $\varphi_0=\id,\varphi_1=\varphi$ and the conditions (1)-(4) are simultaneously satisfied. Since $\sigma$-map is continuous, it is enough to prove invariance of $\mathcal{C}_f^\circ(\rho)$ for big $\rho$.
	
	First, note that for big $\rho$ the set $\mathcal{C}_f^\circ(\rho)$ contains $[c^{-1}]$: $c^{-1}$ can be joined to identity via the isotopy $c_u^{-1}$ where $c_u$ was constructed in Subsection~\ref{subsec:capture}.
	
	Further, from the asymptotic formula~\ref{eqn:as_formula} and the fact that $\Re a_n/\Im a_n\to\infty$ follows that for all $\rho\in\mathcal{P}'$ big enough the first $N+1$ point $a_0,a_1,...,a_N$ of the orbit $P_f$ are contained in $D_\rho$, while the other points are outside and mutual distances between them and from them to $D_\rho$ are bigger than one. In other words, conditions (1)-(2) allow the first $N+1$ points to move under isotopy $\varphi_u$ only inside of $D_\rho$, while for every $n>N$ the point $a_n$ moves inside of a disk $D_n$ of radius $1/n$, and all these disks $D_\rho,D_{N+1},D_{N+2},...$ are mutually disjoint with the mutual distance between them bigger than one. Moreover, we can assume that for $n>N$ we have $\Re a_n+1<\Re a_{n+1}$.
	
	For all $\varphi\in\mathcal{C}_f^\circ(\rho)$ after concatenation with $c_u^{-1}$ we obtain the isotopy $\psi_u$ \idt\ maps with $\psi_0=c^{-1},\psi_1=\varphi$ and satisfying conditions (1)-(4).  Then $\hat{\psi}_u$ is an isotopy \idt\ maps with $\hat{\psi}_1=\id$. Let $g_u(z)=\psi_u\circ f\circ\hat{\psi}_u^{-1}(z)=e^z+\kappa_u$. Now we want to prove that $\hat{\psi}_u$ satisfies each of the items (1)-(4): from this would follow that $\hat{\varphi}\in\mathcal{C}_f^\circ(\rho)$. 
	
	We prove that each of the conditions (1)-(4) for $\hat{\varphi}$ follows from the conditions (1)-(4) for $\varphi$.
	
	\textbf{(4)} Note that since for $n>N$ we have $\Re a_n+1<\Re a_{n+1}$, for $n>N$ we have $$\abs{W_N^{\psi_u}}=0<1.$$
	
	Hence from Theorem~\ref{thm:homotopy_type_under_pullback} for $n\leq N$ we get $$\abs{W_n^{\hat{\psi}_u}}<A(n+2)^4\max\{\abs{W_{n+1}^{\psi_u}},1\}<A(n+2)^4 A^{N-(n+1)+1}\left(\frac{(N+2)!}{(n+2)!}\right)^4=$$
	$$A^{N-n+1}\left(\frac{(N+2)!}{(n+1)!}\right)^4.$$
	
	\textbf{(3)} We use the formula $\psi_u(a_{n+1})=e^{\hat{\psi}_u(a_n)}+\kappa_u$. Since $\rho=\frac{t_N+t_{N-1}}{2}$ and $\kappa_u\in D_\rho$, for $n\leq N$ and big $\rho$ we have
	$$\Re \hat{\psi}_u(a_n)=\log\abs{\psi_u(a_{n+1})-\kappa_u}<\log\abs{t_{N+1}+1+\rho}<$$
	$$\log(2t_{N+1})=\log(2F(t_N))<2t_N.$$
	
	Thus for $k<l\leq N$ and big enough $\rho$ 
	$$\abs{\psi_u(a_{k+1})-\psi_u(a_{l+1})}=\abs{\int_{\hat{\psi}_u(a_k)}^{\hat{\psi}_u(a_l)}e^z dz}\leq\int_{\hat{\psi}_u(a_k)}^{\hat{\psi}_u(a_l)}\abs{e^z} \abs{dz}<M_\rho\abs{\hat{\psi}_u(a_k)-\hat{\psi}_u(a_l)},$$
	and
	$$\abs{\hat{\psi}_u(a_k)-\hat{\psi}_u(a_l)}>\frac{\abs{\psi_u(a_{k+1})-\psi_u(a_{l+1})}}{M_\rho}>\frac{1}{(M_\rho)^{N-l+1}}.$$
	
	\textbf{(2)} Since $\hat{\psi}_u$ is continuous in $u$ and $\hat{\psi}_0=\id$, for $n>N$
	$$\hat{\psi}_u(a_n)=\log(\psi_u(a_{n+1})-\kappa_u)=\log (a_{n+1}+O(1)-\kappa_u)=$$
	$$a_n+\log\left(\frac{a_{n+1}+O(1)-\kappa_u}{e^{a_n}}\right)=a_n+\log\left(1+\frac{\kappa_0+O(1)-\kappa_u}{a_{n+1}-\kappa_0}\right).$$
	
	If $\rho$ (and consequently $N$) is big enough, we have $\abs{\hat{\psi}_u(a_n)-a_n}<1/n$ for all $n>N$ (due to the very fast growth of $\abs{a_n}$).
	
	\textbf{(1)} Instead of proving directly that for $n\leq N$ holds $\hat{\psi}_u(a_n)\in D_\rho$, we prove that for big $\rho$
	$$-\frac{\rho}{2}<\Re\hat{\psi}_u(a_n)<\frac{\rho}{2}$$
	and
	$$-\frac{\rho}{2}<\Im\hat{\psi}_u(a_n)<\frac{\rho}{2}.$$
	From this would follow that $\abs{\hat{\psi}_u(a_n)}<\rho/{\sqrt{2}}<\rho.$
	
	In the proof of (3) we have shown that for big $\rho$ holds $\Re \hat{\psi}_u(a_n)<2t_N$. Clearly, $2t_N<\rho/2$ for big $\rho$.
	
	Using condition (3) for $\psi_u$ we get
	$$\Re \hat{\psi}_u(a_n)=\log\abs{\psi_u(a_{n+1})-\kappa_u}>\log\frac{1}{M_\rho^n}=-2nt_N$$
	which is bigger than $-\rho/2$ for all $\rho$ big enough.
	
	Since $\abs{W_{n+1}^{\psi_u}}<A^{N-n}\left(\frac{(N+2)!}{(n+2)!}\right)^4$, the spider leg $\hat{\psi}_u(R_{n+1})$ makes no more than $A^{N-n}\left(\frac{(N+2)!}{(n+2)!}\right)^4+1$ loops around the singular value. Hence the difference between $2\pi s_n$ and the imaginary part of $\hat{\psi}_u(a_n)$ will be less than $2\pi \left(A^{N-n}\left(\frac{(N+2)!}{(n+2)!}\right)^4+1\right)$. Due to fast growth of the sequence $\{t_n\}_{n=0}^\infty$, by making $\rho$ bigger we may assume that for all $n\leq N$ we have 
	$$2\pi \left(A^{N-n}\left(\frac{(N+2)!}{(n+2)!}\right)^4+1+\abs{s_n}\right)<\frac{\rho}{2}.$$
	Hence $\abs{\Im\hat{\psi}_u(a_n)}<\rho/2$ and $\abs{\hat{\psi}_u(a_n)}<\rho/\sqrt{2}<\rho$.	
\end{proof}

\begin{remark}
	From the construction of $\mathcal{C}_f(\rho)$ immediately follows that the elements of $\mathcal{C}_f(\rho)$ are asymptotically conformal.	
\end{remark}

Next, we prove another key statement claiming compactness of $\mathcal{C}_f(\rho)$. Note that it is very natural to expect this compactness because of the conditions (1)-(4) of the Theorem~\ref{thm:invariant_subset}. Indeed, note that (1)-(4) are formulated in terms of how the marked points $a_n$ are allowed to move under isotopies $\varphi_u$ \idt\ maps: the points inside of $D_\rho$ have to stay inside, points outside of $D_\rho$ move only slightly. At the same time the points $a_n$ inside of $D_\rho$ are allowed to rotate around points in $\mathcal{O}_n$ only some bounded amount of times. Outside of $D_\rho$ no ``rotations'' are allowed and basically nothing happens. The final remark is that the marked points are not allowed to come too close to each other.

\begin{thm}[Compactness]
	\label{thm:compact_subset}
	For all $\rho\in\mathcal{P}'$ big enough $\mathcal{C}_f(\rho)$ is a compact subset of $\mathcal{T}_f$.	
\end{thm}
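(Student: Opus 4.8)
The plan is to show that $\mathcal{C}_f(\rho)$ is sequentially compact in the \tei\ metric. Since $\mathcal{C}_f(\rho)$ is by definition closed, and $\mathcal{T}_f$ is complete, it suffices to prove that $\mathcal{C}_f^\circ(\rho)$ is totally bounded, or equivalently that every sequence of points $[\varphi^{(k)}]\in\mathcal{C}_f^\circ(\rho)$, represented by $\id$-type maps $\varphi^{(k)}$ satisfying conditions (1)--(4) via isotopies $\varphi^{(k)}_u$, has a subsequence converging in $\mathcal{T}_f$. The key is Theorem~\ref{thm:W_define_teich_point}: a point in $\mathcal{C}_f^\circ(\rho)$ is completely encoded by the data $\big(\varphi^{(k)}(a_n),\,W_n^{\varphi^{(k)}}\big)_{n\geq 0}$ (position in moduli space plus leg homotopy words), so it is enough to extract a subsequence along which this combinatorial-geometric data stabilizes or converges, and then upgrade such convergence of the data to convergence in the \tei\ metric.

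First I would deal with the moduli part. By condition (1) the finitely many points $\varphi^{(k)}(a_0),\dots,\varphi^{(k)}(a_N)$ lie in the compact disk $\overline{D_\rho}$, so after passing to a subsequence they converge to limit points $b_0,\dots,b_N\in\overline{D_\rho}$; condition (3) guarantees that the mutual distances stay bounded below by the fixed positive constants $(M_\rho)^{-(N-l+1)}$, so the limits $b_0,\dots,b_N$ are still pairwise distinct (and distinct from $0$, again by (3) applied with the asymptotic value, since the real parts of $\hat\psi_u(a_n)$ are bounded below). By condition (2) each $\varphi^{(k)}(a_n)$ with $n>N$ lies within $1/n$ of $a_n$, so these coordinates are automatically controlled and converge after a diagonal extraction. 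For the homotopy part, condition (4) says that for each fixed $n\leq N$ the word $W_n^{\varphi^{(k)}}$ has length bounded by $A^{N-n+1}\big((N+2)!/(n+1)!\big)^4$, a bound independent of $k$; since there are only finitely many words in the free group $F(\varphi^{(k)}(\mathcal O_n))$ of bounded length, after a further diagonal subsequence the combinatorial pattern of $W_n^{\varphi^{(k)}}$ stabilizes for every $n\leq N$. For $n>N$, an argument as in the verification of (2) in Theorem~\ref{thm:invariant_subset} — the legs stay in the mutually disjoint thin disks $D_n$, $\Re a_n+1<\Re a_{n+1}$, and the maps are $\id$-type — forces $W_n^{\varphi^{(k)}}$ to be trivial (or eventually stabilized) for all $k$.

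Having stabilized all the combinatorial data, I would then build a candidate limit $[\varphi^{(\infty)}]$: there is an $\id$-type map $\varphi^{(\infty)}$ with $\varphi^{(\infty)}(a_n)=b_n$ for $n\leq N$ and $\varphi^{(\infty)}(a_n)=a_n$ (say) for $n>N$, realizing the stabilized leg homotopy words; one checks that the defining conditions (1)--(4) pass to the limit (they are closed conditions — weak inequalities, or membership in a closed disk), so $[\varphi^{(\infty)}]\in\mathcal{C}_f(\rho)$. The remaining task is to show $d_T([\varphi^{(k)}],[\varphi^{(\infty)}])\to 0$. Here one produces, for each $k$, an explicit quasiconformal representative of $[\varphi^{(k)}\circ(\varphi^{(\infty)})^{-1}]$ with maximal dilatation tending to $1$: inside a large disk one uses the convergence of the finitely many marked points $b_n^{(k)}\to b_n$ together with the stabilized homotopy types to build a \qc\ map close to identity (the quantitative bound (3) on separations keeps the dilatation controlled, exactly as in Lemma~\ref{lmm:correction_of_dilatation} and the Beurling--Ahlfors estimate), and outside that disk the maps are already uniformly close to the identity because all points are $\id$-type with the fast-shrinking error bounds of conditions (2) and (4). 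Concatenating and using that the ``bad'' set can be pushed into a neighborhood of $\infty$ of vanishing conformal size gives $\log K\to 0$.

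The main obstacle I expect is the final step — converting convergence of the discrete data $(\varphi^{(k)}(a_n),W_n^{\varphi^{(k)}})$ into \tei-metric convergence — because this requires genuinely constructing near-conformal homeomorphisms respecting infinitely many marked points simultaneously, and one must be careful that the homotopy words $W_n$ for $n\leq N$, though of bounded length, involve punctures that are themselves only converging (not fixed), so the identification of ``the same word'' across $k$ needs the uniform separation bound (3) to make the corresponding generators of $\pi_1(H_r,\infty)$ vary continuously; the Hawaiian-earring description (Lemma~\ref{lmm:hawaiian_earring_skeleton}) and the inverse-limit structure of $\pi_1(\mathcal H,0)$ used in Proposition~\ref{prp:proj_spiders_define_teich_point} are what make the passage from ``all finite-puncture homotopy types agree'' to ``the full homotopy type agrees'' legitimate in the limit.
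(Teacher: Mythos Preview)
Your overall strategy matches the paper's: pass to a subsequence along which the marked-point positions converge (using (1)--(3)), stabilize the finitely many leg homotopy words (using (4)), and then upgrade this to \tei-metric convergence.  The difference lies exactly in the step you flag as the main obstacle, and the paper resolves it with a device you do not mention.

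The paper introduces the notion of a \emph{configuration} of $\varphi^{(k)}(V)$, where $V=P_f\cap D_\rho$: two configurations agree if all pairwise order relations among the real parts, and among the imaginary parts, coincide.  After extracting a subsequence one may assume all $\varphi^{(k)}(V)$ have the same configuration and all $\varphi^{(k)}(a_n)$ lie in small disjoint disks around their limits $b_n$.  The key observation is that moving the punctures along straight segments inside these disks (which is possible without changing the configuration) does not change any $W_n$; this is what makes ``the same abstract word in moving generators'' a well-posed notion, and together with Theorem~\ref{thm:W_define_teich_point} it gives $[\varphi^{(k+1)}]=[\xi_1^k\circ\varphi^{(k)}]$ for a simple isotopy $\xi_u^k$ supported in the little disks.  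The paper then concludes by completeness, noting that the sequence is Cauchy because the \tei\ space of a disk with a single marked point is itself a disk --- it never builds the limit $[\varphi^{(\infty)}]$ explicitly.

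So your proposal is not wrong, but the part you call the obstacle is genuinely the whole content of the proof, and you should replace the vague plan of ``constructing near-conformal homeomorphisms'' by the configuration argument: pass to a subsequence with fixed configuration, use straight-line isotopies inside disjoint disks (which preserve $W_n$), and invoke Theorem~\ref{thm:W_define_teich_point} to identify consecutive terms up to these small isotopies.  The paper also first decouples the problem by reparametrizing the isotopies so that the points outside $D_\rho$ move on $u\in[0,1/2]$ and the points inside $D_\rho$ move on $u\in[1/2,1]$, reducing to two independent and much simpler cases; this is cleaner than handling all marked points at once.
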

\begin{proof}
	We are going to prove that the set $\mathcal{C}_f^\circ(\rho)$ from the proof of Theorem~\ref{thm:invariant_subset} is pre-compact, or equivalently, since $\mathcal{T}_f$ is a metric space in the \tei\ metric, that every sequence $\{[\varphi^n]\}\subset\mathcal{C}_f^\circ(\rho)$ has a subsequence that converges to some point $[\varphi]\in\mathcal{T}_f$.
		
	Since each $[\varphi^n]\in\mathcal{C}_f^\circ(\rho)$, we can assume that every $\varphi^n$ is \idt\ and $\varphi_u^n$ is an isotopy with $\varphi_0^n=\id, \varphi_1^n=\varphi^n$ satisfying conditions (1)-(4) of Theorem~\ref{thm:invariant_subset}.
	
	Recall from the proof of Theorem~\ref{thm:invariant_subset} that for all $\rho\in\mathcal{P}'$ big enough the first $N+1$ marked points move under isotopy $\varphi_u$ only inside of $D_\rho$, while for every $n>N$ the point $a_n$ moves inside of a disk $D_n$ of radius $1/n$, and all these disks $D_\rho,D_{N+1},D_{N+2},...$ are mutually disjoint with the mutual distance between them bigger than one. Hence we can assume that when $u\in[0,1/2]$, we have $\varphi^n_u|_{D_\rho}=\id$ for every $n$, and when $u\in[1/2,1]$, we have $\varphi^n_u|_{\cup_{k={N+1}}^\infty D_k}=\id$ for every $n$, that is, first the marked points inside of $D_\rho$ do not move, and afterwards do not move the marked point outside of $D_\rho$. This means that we simply need to prove the theorem in two separate cases: when only the marked points outside of $D_\rho$ move, and when only the marked points inside of $D_\rho$ move. 
	
	In the former case the sequence $[\varphi^n]$ clearly has a limit point in $\mathcal{T}_f$, since marked points move inside of mutually disjoint small disks with mutual distance between them bounded from below.
	
	Assume now that the marked points outside of $D_\rho$ do not move. We are looking for a convergent subsequence in $\{\varphi^n\}$. Let $V:=P_f\cap D_\rho$.
	
	We say that $\varphi^k(V)$ and $\varphi^l(V)$ have the same configuration if for every pair of points $a_m,a_n\in V$ we have
	$$\Re\varphi^k(a_m)<\Re\varphi^k(a_n)\iff \Re\varphi^l(a_m)<\Re\varphi^l(a_n),$$
	$$\Re\varphi^k(a_m)=\Re\varphi^k(a_n)\iff \Re\varphi^l(a_m)=\Re\varphi^l(a_n),$$
	$$\Im\varphi^k(a_m)<\Im\varphi^k(a_n)\iff \Im\varphi^l(a_m)<\Im\varphi^l(a_n),$$
	$$\Im\varphi^k(a_m)=\Im\varphi^k(a_n)\iff \Im\varphi^l(a_m)=\Im\varphi^l(a_n).$$
	
	Since $V$ is finite, only finitely many different configurations are possible. After passing to a subsequence we may assume that all $\varphi^k(a_n)$ converge to some points $b_n$, $W_n^{\varphi^k}=\const$ for all $a_n\in V$,  and all $\varphi^k(V)$ have the same configuration.
	
	Note that for $k$ big enough $\varphi^k(a_n)$ will be compactly contained in disjoint small disks $D_n$ contained in $\mathbb{D}_{\rho+1}(0)$ with centers at the limits $b_n$, so we can assume this holds for all $n$.
	
	Now let $I_n^k:[0,1]\to\mathbb{C}$ such that $$I_n^k(u)=\varphi^k(a_n)+u(\varphi^{k+1}(a_n)-\varphi^k(a_n)),$$ and let $\xi_u^k$ to be an isotopy of \qc\ maps such that it is identity outside of $\cup D_n$, $\xi_0^k=\id$ and $\xi_u^k(\varphi^k(a_n))=I_n^k(u)$ for all $a_n\in V$.
	
	Note that $[\varphi^{k+1}]=[\xi_1^k\circ\varphi^k]$. Indeed, while moving simultaneously along $I_n^k$, the sets $\xi_1^k\circ\psi^k(V)$ do not change their configuration. It is easy to see that in this case $W_n^{\xi_u^k\circ\varphi^k}=W_n^{\varphi^k}$. But then from Theorem~\ref{thm:W_define_teich_point} follows that $\xi_1^k\circ\varphi^k$ and $\varphi^{k+1}$ define the same point in $\mathcal{T}_f$.
	
	Hence $[\varphi^k]=[\xi_u^0\cdot\xi_u^1\cdot...\cdot\xi_u^k]$ converges to some $[\varphi]$ in the \tei\ metric. This is an easy consequence of the fact that \tei\ space of the unit disk with one marked point is homeomorphic to the unit disk.
\end{proof}

\section{Proof of Classification Theorem}

In this section we are going to finally prove the Classification Theorem~\ref{thm:main_thm}. Before the proof we state a variation of the Banach Fixed Point Theorem that will be needed for this.

\begin{lmm}[Adjusted Banach Fixed Point Theorem]
	\label{lmm:Banach_fixed_point}
	Let $X$ be a compact complete metric space, and $\sigma:X\to X$ be a strictly contracting map. Then $X$ contains a fixed point of $\sigma$ and it is unique.
\end{lmm}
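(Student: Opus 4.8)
The plan is to replace the contraction-factor argument of the classical Banach Fixed Point Theorem --- which is not available here, since \emph{strict} contraction need not be uniform --- by a compactness argument applied to the displacement function. First I would introduce the function $\delta\colon X\to\mathbb{R}$ defined by $\delta(x):=d(x,\sigma(x))$. A strictly contracting map is in particular $1$-Lipschitz, hence continuous, so $\delta$ is continuous on $X$; since $X$ is compact, $\delta$ attains its infimum at some point $x_0\in X$.

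Next I would show that necessarily $\delta(x_0)=0$. Suppose not, i.e.\ $x_0\neq\sigma(x_0)$. Then strict contraction yields
\[
\delta(\sigma(x_0))=d\bigl(\sigma(x_0),\sigma(\sigma(x_0))\bigr)<d(x_0,\sigma(x_0))=\delta(x_0),
\]
contradicting the minimality of $\delta(x_0)$. Hence $\sigma(x_0)=x_0$, so $x_0$ is a fixed point of $\sigma$.

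For uniqueness I would argue directly: if $x_0$ and $x_1$ were two distinct fixed points, then
\[
d(x_0,x_1)=d\bigl(\sigma(x_0),\sigma(x_1)\bigr)<d(x_0,x_1),
\]
which is absurd, so the fixed point is unique. The only real subtlety --- and the reason the classical proof does not transfer verbatim --- is that one cannot start from an arbitrary point, iterate $\sigma$, and invoke a geometric-series estimate to get a Cauchy sequence; compactness of $X$ is precisely what compensates for the absence of a uniform contraction factor. Completeness plays no independent role, since a compact metric space is automatically complete; it is listed in the hypotheses only for emphasis.
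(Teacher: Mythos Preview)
Your proof is correct, and it takes a genuinely different route from the paper's. The paper argues by contradiction: assuming there is no fixed point, the ratio function $\kappa(x)=d(\sigma(x),\sigma^2(x))/d(x,\sigma(x))$ is well-defined and continuous on the compact space $X$, hence attains a maximum strictly less than $1$; this recovered uniform contraction factor then makes every orbit $x,\sigma(x),\sigma^2(x),\dots$ Cauchy, and completeness yields a limit that must be fixed. In other words, the paper uses compactness to manufacture a uniform contraction constant and then feeds that into the classical Banach argument. Your approach instead minimises the displacement function $\delta(x)=d(x,\sigma(x))$ directly and observes that strict contraction forces the minimum to be zero. Your argument is shorter and, as you note, does not invoke completeness at all. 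The paper's detour has one incidental dividend: it shows that \emph{every} orbit converges to the fixed point, not merely that a fixed point exists --- a fact the paper does not actually use, but which your argument does not supply.
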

\begin{proof}
	The proof is essentially the same as for the classical Banach Fixed Point Theorem.
	
	Assume that $X$ does not contain a fixed point of $\sigma$ and define a map $\kappa:X\to\mathbb{R}$ so that $\kappa(x)=d(\sigma(x),\sigma^2(x))/d(x,\sigma(x))$ where $d(x,y)$ is the metric function of $X$. Since $X$ is compact, $\kappa$ attains its maximum on $X$. Moreover, because $\sigma$ is strictly contracting, this maximum is less than $1$. But this means that for every $x\in X$ the sequence $x,\sigma(x),\sigma^2(x),...$ is Cauchy and due to completeness converges to some point in $X$. This point will be a fixed point of $\sigma$. Uniqueness is straightforward from the strict contraction of $\sigma$.
\end{proof}

Now, we prove Theorem~\ref{thm:main_thm}.

\begin{proof}[Proof of Classification Theorem~\ref{thm:main_thm}]
	
	We show existence and uniqueness of $g$ separately.
	
	\textbf{(Existence.)} Choose as some $f_0\in\mathcal{N}$ so that its singular value does not escape. Then due to Theorem~\ref{thm:as_formula} in $I(f_0)$ there is a point $z$ that escapes on rays with potential $t$ and external address $\underline{s}$.
	
	Let $f=c\circ f_0$ be the captured exponential function constructed as in Subsection~\ref{subsec:capture} and having singular value $z$ escaping as under $f_0$.
	
	From Theorems~\ref{thm:invariant_subset} and \ref{thm:compact_subset} we know that there is a non-empty compact invariant under $\sigma$ subset $\mathcal{C}_f=\mathcal{C}_f(\rho)\subset\mathcal{T}_f$ such that all its elements are asymptotically conformal. Next, from Theorem~\ref{thm:sigma_strictly_contracting} follows that $\sigma$ is strictly contracting in the \tei\ metric on $\mathcal{C}_f$.
	
	Thus we have a strictly contracting map $\sigma$ on a compact complete metric space $\mathcal{C}_f$. Note that it is \emph{not} true that $\sigma$ is uniformly strictly contracting on $\mathcal{C}_f$ (think of the map $x^2$ on the interval $[0,1/2]$), so we cannot apply the Banach Fixed Point Theorem directly. But from Lemma~\ref{lmm:Banach_fixed_point} follows that $\mathcal{C}_f$ contains a fixed point $[\varphi_0]$. Note (even though we do not use this fact later) that, because $\sigma$ is strictly contracting on $\mathcal{C}_f$, the constructed fixed point $[\varphi_0]$ is the unique fixed point in $\mathcal{C}_f$.
	
	Now let $g=\varphi_0\circ f\circ\hat{\varphi}_0^{-1}$. This is an entire function that is Thurston equivalent to $f$. Moreover, since every point in $\mathcal{C}_f$ is \idt, without loss of generality we can assume that $\varphi_0$ is \idt.
	
	Now we need to prove that $\varphi_0(a_0)$ escapes on rays with potential $t$ and external address $\underline{s}$.
	
	Note first that since the map $\varphi_0$ is \idt,  the singular value $\varphi_0(a_0)$ escapes on rays. Indeed, the dynamic ray of $g$ having external address $\underline{s}$ is defined for potentials $t>t_{\underline{s}}$ (because none of its images contains the singular value), and the point $p$ of this ray having potential $t>t_{\underline{s}}$ has the same asymptotics under iterations of $g$ as $\varphi_0(a_0)$. Since $g^{-1}$ is uniformly contracting on its tracts near $\infty$, we see that for some integer $N>0$ the point $g^N(p)$ coincides with $g^N(\varphi_0(a_0))$. For the same reason we can assume that for $n\geq N$ the legs $\varphi_0(R_n)$ coincide with the ray tails of $g$ towards $g^n(\varphi_0(a_0))$ (just isotope $\varphi_0$: every such $\varphi_0(R_n)$ is in the same homotopy class as the ray tail). Since the preimages of $\varphi_0(R_n)$ recover according to the external address, $\varphi_0(R_0)$ coincides with the ray tail of $g$ towards $\varphi_0(a_0)$.
	
	\textbf{(Uniqueness.)} Assume that there exists another function $h\in\mathcal{N}$ satisfying the same conditions on the combinatorics and speed of escape of the singular value. Let $S_0^g$ and $S_0^h$ be the standard spiders of $g$ and $h$, respectively (i.e.\ union of the ray tails towards the singular orbit).
	
	Since $S_0^g$ and $S_0^h$ have the same vertical order of legs, one can map $P_g$ into $P_h$ with a map $\chi$ \idt\ belonging to an asymptotically conformal point of $\mathcal{T}_g$ so that $\chi(S_0^g)$ is equivalent to $S_0^h$ as spiders. But then we can define the map $\sigma:\mathcal{T}_g\to\mathcal{T}_g$ for the function $g$ (i.e\ with the capture identically equal to identity), and then the map $\hat{\chi}$ will be isotopic to $\chi$ relative $P_g$ --- this is an immediate consequence of the fact that $\chi$ and $\hat{\chi}$ are \idt\ and that $\chi$ can be lifted by the branched covering maps $g$ and $h$. On the other hand, since $[\id]$ is invariant under $\sigma$ ($g$ is entire), and $[\chi]$ is asymptotically conformal, either the \tei\ distance from $[\chi]$ to $[\id]$ must be strictly bigger than the \tei\ distance from $[\hat{\chi}]$ to $[\id]$ (Theorem~\ref{thm:sigma_strictly_contracting}) or $[\chi]=[\id]$ and $g=h$. Since $[\chi]$ is a fixed point of $\sigma$, only the latter option is possible. 
\end{proof}

\section{Acknowledgements}

We would like to express our gratitude to our research team in Aix-Marseille Universit\'e, especially to Dierk Schleicher who supported this project from the very beginning, Sergey Shemyakov who carefully proofread all drafts, as well as to  Kostiantyn Drach, Mikhail Hlushchanka, Bernhard Reinke and Roman Chernov for uncountably many enjoyable and enlightening discussions of this project at different stages. We also want to thank Dzmitry Dudko for his multiple suggestions that helped to advance the project, Lasse Rempe for his long list of comments and relevant questions, and Adam Epstein for important discussions especially in the early stages of this project. 

Finally, we are grateful to funding by the Deutsche Forschungsgemeinschaft DFG, and the ERC with the Advanced Grant “Hologram” (695621), whose support provided excellent conditions for the development of this research project.

\vspace{1em}
Aix-Marseille Universit\'e, France

\textit{Email:} bconstantine20@gmail.com

\end{document}